\definecolor{cite}{RGB}{44,123,182}
\definecolor{ref}{RGB}{215,25,28}
\newtheorem*{rep@theorem}{\rep@title}
\newcommand{\newreptheorem}[2]{%
\newenvironment{rep#1}[1]{%
 \def\rep@title{#2 \ref{##1}}%
 \begin{rep@theorem}}%
 {\end{rep@theorem}}}
\theoremstyle{plain}
\newtheorem{thm}{Theorem}[section]
\newtheorem{corollary}[thm]{Corollary}
\newtheorem{lemma}[thm]{Lemma}
\newtheorem{proposition}[thm]{Proposition}
\newtheorem*{thm*}{Theorem}
\newtheorem*{corollary*}{Corollary}
\newtheorem*{lemma*}{Lemma}
\newtheorem*{ld*}{Lemma/Definition}
\newtheorem*{proposition*}{Proposition}
\newtheorem*{assumption*}{Assumption}
\theoremstyle{definition}
\newtheorem{definition}[thm]{Definition}
\newtheorem{remark}[thm]{Remark}
\newtheorem*{definition*}{Definition}
\newtheorem*{example*}{Example}
\newtheorem*{xca*}{Exercise}
\newtheorem*{claim*}{Claim}
\newtheorem*{fact*}{Fact}
\newtheorem*{notation*}{Notation}
\newtheorem*{construction*}{Construction}
\newtheorem*{ack*}{Acknowledgements}
\newtheorem*{question*}{Question}
\newtheorem*{problem*}{Problem}
\newtheorem*{conjecture*}{Conjecture}
\newcommand{\C}{\mathbb{C}}
\newcommand{\Z}{\mathbb{Z}}
\newcommand{\Q}{\mathbb{Q}}
\newcommand{\R}{\mathbb{R}}
\newcommand{\pr}[1]{\mathbb P^{#1}}
\DeclareMathOperator{\GL}{GL}
\DeclareMathOperator{\coker}{coker}
\DeclareMathOperator{\Hom}{Hom}
\DeclareMathOperator{\Ext}{Ext}
\DeclareMathOperator{\Sym}{Sym}
\DeclareMathOperator{\Ch}{ch}
\DeclareMathOperator{\Pic}{Pic\,}
\DeclareMathOperator{\Hilb}{Hilb}
\DeclareMathOperator{\Stab}{Stab}
\DeclareMathOperator{\Coh}{Coh}
\newcommand{\sA}{\mathcal{A}}
\newcommand{\sB}{\mathcal{B}}
\newcommand{\sH}{\mathcal{H}}
\newcommand{\sO}{\mathcal{O}}
\newcommand{\sK}{\mathcal{K}}
\newcommand{\sQ}{\mathcal{Q}}
\newcommand{\sM}{\mathcal{M}}
\newcommand{\PP}{\mathcal{P}}
\newcommand{\st}{\,|\,}					
\DeclarePairedDelimiter{\set}{\lbrace}{\rbrace}
\DeclarePairedDelimiter{\pair}{\langle}{\rangle}
\DeclarePairedDelimiter{\abs}{\lvert}{\rvert}
\newcommand{\Ku}{\mathsf{Ku}}
\newcommand{\sI}{\mathcal{I}}
\begin{document}

\title{A note on the Kuznetsov component of the Veronese double cone}

\author[M. Petkovi\'{c}]{Marin Petkovi\'{c}}
\address{MP: Department of Mathematics \\ University of Utah \\ Salt Lake City,~
UT 84102,~USA} 
\email{petkovic@math.utah.edu}

\author[F. Rota]{Franco Rota}
\address{FR: School of Mathematics and Statistics \\ University of Glasgow \\ University Place, Glasgow,~
G128QQ,~UK} 
\email{franco.rota@glasgow.ac.uk}

\subjclass[2020]{Primary 14F08; secondary 14J45, 14D20}
\keywords{Derived Categories, Bridgeland stability conditions, Fano threefolds, moduli spaces, Veronese double cone}

\begin{abstract}

This note describes moduli spaces of complexes in the derived category of a Veronese double cone $Y$. Focusing on objects with the same class $\kappa_1$ as ideal sheaves of lines, we describe the moduli space of Gieseker stable sheaves and show that it has two components. Then, we study the moduli space of stable complexes in the Kuznetsov component of $Y$ of the same class, which also has two components. One parametrizes ideal sheaves of lines and it appears in both moduli spaces. The other components are not directly related by a wall-crossing: we show this by describing an intermediate moduli space of complexes as a space of stable pairs in the sense of Pandharipande and Thomas. 
\end{abstract}

\maketitle

\section{Introduction}

Let $Y$ be a Fano threefold of Picard rank 1 and index 2, that is $\Pic(Y)= \pair H $ with $-K_Y\sim 2H$ ample. The manifold $Y$
belongs to one of five families of deformations, indexed by their degree $d\coloneqq H^3\in \set{1,...,5}$ \cite{Iskovskih1977}.
For all values of $d$, $D^b(\Coh Y)$ admits a triangulated subcategory $\Ku(Y)$ - called the \textit{Kuznetsov component of $Y$} \cite{Kuz09_threefolds} - which is the right orthogonal to an exceptional pair of line bundles.

The numerical Grothendieck group $ N(\Ku(Y))\subset N(D^b(Y))$ is a rank 2 lattice generated by the classes \footnote{We will use $H$ to also indicate the class $[H]$ in the numerical Grothendieck group and the Chern character $\Ch(H)$ in the cohomology ring.} 
$$\kappa_1=1-\frac{H^2}{d}\qquad\mbox{ and }\qquad \kappa_2=H-\frac{H^2}{2}-\frac{(6-d)H^3}{6d}.$$ 

Here we work with $Y$ general of degree $1$. In this case, $Y$ is a hypersurface of degree 6 in the weighted projective space $\mathbb P(1,1,1,2,3)$ and is called a \textit{Veronese double cone}. This paper studies moduli spaces of objects of class $\kappa_1$: these are closely related to the geometry of lines in $Y$, since ideal sheaves of lines in $Y$ have class $\kappa_1$.

\smallskip

Our first result is a description of the Hilbert scheme of lines in $Y$, denoted $\Hilb(Y,t+1)$.

\begin{thm}[= (\ref{cor_Hilb=M_G}) + (\ref{thm_Gieseker})]
$\Hilb(Y,t+1)$ is isomorphic to the moduli space $M_G(\kappa_1)$ of Gieseker-stable sheaves of class $\kappa_1$. 

It has two irreducible components $M_1$ and $M_2$.
$M_1$ is a smooth surface compactifying the locus of ideals of lines of $Y$. 
$M_2$ has dimension 5 and its general points parametrize genus 1 curves union a point. It is smooth outside the intersection with $M_1$. 
Points in $M_1\cap M_2$ parametrize singular rational curves with an embedded point at the singularity. 
\end{thm}

The first very ample multiple of $H$ is $3H$, so $M_2$ is analogous to the extra component in the Hilbert scheme of twisted cubics in projective space \cite{PS85}. 

\smallskip

Two irreducible components also appear in moduli spaces of complexes in $\Ku(Y)$. A general construction of Bayer, Lahoz, Macr\'i, and Stellari yields a stability condition $\sigma$ on $\Ku(Y)$ \cite{BLMS17}. Therefore, moduli spaces of $\sigma$-semistable complexes in $\Ku(Y)$ of class $v$ are defined, and denoted $M_{\sigma}(v)$.  
Combined with the rotation autoequivalence introduced in \cite[Sec. 3.3]{Kuz15_CY}, a construction of \cite{PY20}
induces isomorphisms of moduli spaces of $\sigma$-stable complexes in $\Ku(Y)$ of different classes. When applied to the Veronese double  cone, this isomorphism identifies $M_\sigma(\kappa_1)$ with $M_\sigma(\kappa_2)$, which has been studied in \cite{APR19} and has a component $M_3$ isomorphic to $Y$ itself.

\begin{thm}[= (\ref{thm_three_moduli})]
Let $Y$ be a general smooth Veronese double cone. The moduli spaces $M_\sigma(\kappa_1)$, $M_\sigma(\kappa_2)$ and $M_\sigma(\kappa_2-\kappa_1)$ are isomorphic. 
They have two irreducible components $M_1$ and $M_3$, isomorphic respectively to the 2-dimensional component of $M_G(\kappa_1)$ and to $Y$ itself. 
\end{thm}

The objects parametrized by $M_3$ are described explicitly in Section \ref{sec_obj_and_mod_in_ku} and they are related to projections of skyscraper sheaves of points to $\Ku(Y)$.

\smallskip

The moduli spaces $M_G(\kappa_1)$ and $M_{\sigma}(\kappa_1)$ are related by deformations of (weak) stability conditions and wall-crossing. The interpolating stability conditions $\sigma_{\alpha,\beta}$ and $\sigma_{\alpha,\beta}^0$
yield moduli spaces denoted respectively by $M_{\alpha,\beta}^0(\kappa_1)$ and $M_{\alpha,\beta}(\kappa_1)$. 
Theorem \ref{thm_classification} describes them set-theoretically, by classifying complexes that are stable for $\sigma_{\alpha,\beta}$ and $\sigma_{\alpha,\beta}^0$. As a consequence, $\sigma_{\alpha,\beta}^0$-semistable complexes can be interpreted as quotients of $\sO_Y$ of class $\kappa_1$, in a perverse (repeatedly tilted) heart on $Y$ (Prop. \ref{prop_quot}).
We also obtain the following moduli-theoretic description, which relates $M_{\alpha,\beta}^0(\kappa_1)$ and the moduli space $P(\kappa_1)$ of Pandharipande-Thomas stable pairs of class $\kappa_1$ \cite{PT09_CurveCounting}. 

\begin{thm}[= (\ref{thm_classification}) + (\ref{thm_stable_pairs})]
The space $M_{\alpha,\beta}(\kappa_1)$ coincides with $M_G(\kappa_1)$.

The space $M_{\alpha,\beta}^0(\kappa_1)$ is identified with $P(\kappa_1)$, so it is a projective scheme. It contains $M_1$ and a second irreducible component $\tilde{M}_3$, which is the blow-up of $M_3\simeq Y$ at a point.
\end{thm}

\smallskip

In summary, the spaces 
\[\Hilb(Y,t+1)\simeq M_G(\kappa_1)\simeq M_{\alpha,\beta}(\kappa_1)\]
have two irreducible components $M_1$ and $M_2$, whose generic points parametrize lines and genus 1 curves union a point respectively. In the space $M^0_{\alpha,\beta}\simeq P(\kappa_1)$ the component $M_2$ is traded off with $\tilde{M_3}$, which is related to projections of points to $\Ku(Y)$ and is a blow-up of $M_3\simeq Y\subset M_\sigma(\kappa_1) = M_1 \cup M_3$.

\subsection*{Related works and remarks}

For degrees $d>1$, \cite{PY20} shows that the spaces $\Hilb(Y,t+1)$ and $M_\sigma(\kappa_1)$ are isomorphic. They both coincide with an irreducible surface (smooth for $d\geq 3$), called the Fano surface of lines of $Y$\footnote{The statements about the Hilbert scheme are classical, see for example \cite[\S 2.2]{Kuznetsov2018} and references therein. The isomorphism between $\Hilb(Y,t+1)$ and $M_\sigma(\kappa_1)$ is \cite[Theor. 1.1]{PY20}. If $d=1$, the closure of the locus of smooth lines in $\Hilb(Y,t+1)$ is a projective irreducible scheme, given by a smooth surface with an embedded curve \cite[Theor. 4]{Tihomirov1982}.}. Thus, the appearance of a second component is special to degree 1. As mentioned above, this is linked to neither $H$ nor $2H$ being very ample for Veronese double cones. 

The unusual behavior of $M^0_{\alpha,\beta}$ also appears only in degree 1. 
In fact, the authors of \cite{PY20} show that the moduli spaces $M_{\alpha,\beta}^0(\kappa_1)$ and $M_{\alpha,\beta}(\kappa_1)$ are all isomorphic to $M_\sigma(\kappa_1)$ for $d>1$. The same happens for moduli spaces of class $\kappa_2$ - studied for all degrees in \cite{APR19} - and in the context of cubic fourfolds as well \cite{BLMS17}. 

It is worth remarking that, in the cases of $d>1$ and of cubic fourfolds, the heart $\sA(\alpha,\beta)$ of the stability condition $\sigma$ has dimension $\leq 2$, which is linked to results of smoothness of moduli spaces (e.g. in \cite[Theor. 1.2]{PY20}) and is crucial to prove \textit{categorical Torelli theorems}\footnote{These are reconstruction results showing that $\Ku(Y)$ determines $Y$ up to isomorphism. Categorical Torelli theorems are known to hold for all degrees $d >1$, but the question is open for $d=1$. We direct the interested reader to \cite{PS22}, which surveys results and open problems in this area.}. As shown in Remark \ref{rmk_hom_dim}, however, $\sA(\alpha,\beta)$ has dimension 3 in degree 1.  

More recently, the works \cite{Qin21} and \cite{LZ21} study moduli spaces of non-primitive classes and relate them to instanton bundles on $Y$.

\subsection*{Structure of the paper} After introducing preliminary notions in Section \ref{sec_preliminaries}, we study the Hilbert scheme of lines on a Veronese double cone in Section \ref{sec_Fano_scheme_of_lines}. Section \ref{sec_obj_and_mod_in_ku} is dedicated to the description of the moduli space $M_\sigma(\kappa_1)$. Section \ref{sec_set_theoretic} contains the classification of semistable objects for the interpolating weak stability conditions, and Section \ref{sec_stable_pairs} contains the description of the moduli space of stable pairs.

\subsection*{Acknowledgements} We are grateful to Laura Pertusi, Song Yang, Aaron Bertram and Arend Bayer for the fruitful discussions on these topics. Both authors were partially supported by NSF-FRG grant DMS 1663813, PI: Aaron Bertram.

\section{Preliminaries}\label{sec_preliminaries}

\subsection{Stability conditions}
\label{sec_stab_cond_and_Ku}

Here we give a short review of Bridgeland stability conditions, with the main purpose of fixing the notation for what follows. We direct the interested reader to the seminal work of Bridgeland \cite{Bri07_triang_cat} and to the survey \cite{MS17} and references therein for a thorough description.

\begin{definition}
Let $\sA$ be an abelian category and let $K(\sA)$ be its Grothendieck group. A \emph{(weak) stability function} is a group homomorphism $Z:K(\sA)\to \C$ such that
$$\Im Z(E)> 0 \text{ or }\Im Z(E)=0 \text{ and } \Re Z(E)<(\leq) 0$$
for any $0\neq E\in \sA$. To a (weak) stability function $Z$, we associate a \emph{slope function}
$$\mu(E)=\begin{cases}
\frac{-\Re Z(E)}{\Im Z(E)} & \text{if }\Im Z(E)\neq 0\\
+\infty & \text{otherwise}
\end{cases}$$
We say that $E\in\sA$ is \emph{stable} if for all quotients $E \twoheadrightarrow F$ in $\sA$ we have $$\mu(E)<\mu(F).$$ 
Similarly, $E$ is said to be \textit{semistable} if only the non-strict inequality $\mu(E)\leq \mu(F)$ holds.
\end{definition}
\begin{definition}
Let $\mathsf T$ be  a triangulated category and $v:K(\mathsf T)\twoheadrightarrow \Lambda$ a surjection from the Grothendieck group of $\mathsf T$ to a finite rank lattice. A \emph{(weak) stability condition}  on a triangulated category $\mathsf T$ (with respect to $v$) is a pair $\sigma=(\sA,Z)$ consisting of
\begin{itemize}
    \item[--] a heart of a bounded $t$-structure $\sA$
    \item[--] a (weak) stability function $K(\sA)\xrightarrow[]{v} \Lambda\xrightarrow[]{Z} \C$
\end{itemize}
satisfying the following properties:
\begin{enumerate}[label=(\roman*)]
        \item (Harder-Narasimhan filtration) Any $E\in \sA$ has a filtration in $\sA$ with semistable quotients with decreasing slopes.
        \item (Support property) There exists a quadratic form $Q$ on $\Lambda\otimes \R$ which is negative definite on $\ker Z$ and for all semistable $E\in \sA$ we have $Q(E)\geq0 $ .
    \end{enumerate}
    We say an object $E\in \mathsf T$ is $\sigma$-(semi)stable if $E[k]\in \sA$ for some $k\in \Z$ and $E[k]$ is semistable with respect to $Z$ \footnote{\label{foot_Slicing}Given a stability condition $\sigma=(\sA,Z)$ and $\phi\in(0,1]$, let $\mathcal{P}(\phi)$ be the category of $\sigma$-semistable objects  $E\in \sA$ satisfying $Z(v(E))\in \R_{>0}e^{i\pi\phi}$. Then, define $\mathcal P(\phi)$ for all $\phi\in \R$ by imposing $\mathcal{P}(\phi+1)\coloneqq \mathcal{P}(\phi)[1]$.
    The collection $\mathcal P\coloneqq \{\mathcal P(\phi)\}_{\phi\in \R}$ defines a \textit{slicing} of $\mathsf T$. The datum of a slicing and a compatible stability function is in fact equivalent to that of a stability condition \cite[Prop. 5.3]{Bri07_triang_cat}, and it is sometimes convenient to identify $\sigma$ with the pair $(Z,\PP)$.}.
\end{definition}

\begin{definition}
Let $\sigma=(\sA,Z)$ be  a weak stability condition on $\mathsf T$. For $\beta\in \R$, we define subcategories
$\sA_{\mu\leq \beta}$ and $\sA_{\mu>\beta}$ consisting of objects $E$ such that slopes of all Harder-Narasimhan factors of $E$ are $\leq \beta$ and $>\beta$ respectively.
The tilt of $\sA$ is then defined as the extension closure of $\sA_{\mu\leq \beta}[1]$ and $\sA_{\mu>\beta}$ and denoted
$$\sA_\sigma^\beta=\Big[\sA_{\mu_\sigma\leq \beta}[1],\sA_{\mu_\sigma>\beta}\Big].$$
That is, objects $E\in \sA_\sigma^\beta$ are complexes with 
\begin{align*}
    &\sH^{-1}_\sA(E)\in \sA_{\mu_\sigma\leq \beta}\\
    &\sH^0_\sA(E)\in \sA_{\mu_\sigma>\beta}\\
    &\sH^i_\sA(E)=0,\text{ for }i\neq -1,0.
\end{align*}
\end{definition}

For a smooth projective variety $Y$ with a hyperplane class $H$, define the map
\begin{equation*}
    v=(H^3\Ch_0,H^2\Ch_1,H\Ch_2):K(Y)\to \Q^3
\end{equation*} 
and let $\Lambda\simeq \Z^{\oplus 3}$ be its image.
In this paper, we will be working with the following weak stability conditions on $D^b(Y)$:
\subsubsection{Slope stability}\label{def_Mumford-stability} $\sigma_M=(\Coh(Y),-H^2\Ch_1+iH^3\Ch_0)$ is a weak stability condition with respect to the rank 2 lattice defined as the image of $(H^3\Ch_0,H^2\Ch_1):K(Y)\to \Z^2$.
    This stability condition is also called Mumford stability, or slope stability. We will denote the corresponding slope function with $\mu_M$.
\subsubsection{Tilt-stability}\label{def_tilt-stability} $\sigma_{\alpha,\beta}=(\Coh^\beta(Y),Z_{\alpha,\beta})$, for $\alpha>0$ and $\beta\in \R$, where 
$$\Coh^\beta(Y)=\Big[\Coh(Y)_{\mu_M\leq \beta}[1],\Coh(Y)_{\mu_M>\beta}\Big]$$
and
$$Z_{\alpha,\beta}(E)=-H\Ch_2^\beta (E)+\frac{\alpha^2}{2} H^3\Ch_0 (E)+ i \left(H^2\Ch_1(E)-\beta H^3\Ch_0 (E)\right).$$
Here, $\Ch^\beta(-)\coloneqq e^{-\beta H}\cdot \Ch(-)$ is the twisted Chern character.
This is a weak stability condition with respect to the lattice $\Lambda$ above \cite{BMT14, BMS16}, and is usually called tilt-stability. The corresponding slope function will be denoted with $\mu_{\alpha,\beta}$. The quadratic form satisfying the support property is \cite[Cor. 7.3.2]{BMT14}:
\[ Q(E) = (H^2 \Ch_1^\beta(E)) - 2(H\Ch_2^\beta(E))(H^3 \Ch_0(E)). \]
    
For a class $w\in \Lambda$, the half-plane $\{ (\alpha,\beta) \mid \alpha >0, \beta\in \R \}$ admits a wall-and-chamber decomposition:

\begin{definition}
\label{def_of_walls}
A \textit{numerical wall} with respect to $w\in \Lambda$ is the solution set in $\{ (\alpha,\beta) \mid \alpha >0, \beta\in \R \}$ of an equation $\mu_{\alpha,\beta}(w)=\mu_{\alpha,\beta}(u)$ for some $u\in \Lambda$.

A subset of a numerical wall for $w$ is an \textit{actual wall} if there exists a short exact sequence of semistable complexes in $\Coh^\beta(Y)$, $0\to F\to E\to G \to 0$, with $v(E)=w$ and $v(F)$ defining the numerical wall. 
\end{definition}

Walls of tilt-stability satisfy Bertram's Nested Wall Theorem (first proven for surfaces in \cite{Maciocia14}). In particular:

\begin{thm}[{\cite[Theor. 3.3]{Sch20}}]\label{thm_Nested_Wall}
Fix $w\in \Lambda$.
\begin{itemize}
    \item[--] numerical walls are nested semicircles centered on the $\beta$-axis, except for possibly one, which is a half-line with constant $\beta$;
    \item[--] if two numerical walls intersect, then they coincide;
    \item[--] if a point of a numerical wall is an actual wall, then the whole numerical wall is an actual wall. 
\end{itemize}
\end{thm}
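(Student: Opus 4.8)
The plan is to turn the slope equality $\mu_{\alpha,\beta}(w)=\mu_{\alpha,\beta}(u)$ into an explicit conic in $(\alpha,\beta)$ and read off its shape. Writing $w=(w_0,w_1,w_2)$ for the coordinates of a class under $v\colon K(Y)\to\Lambda=\Z^{\oplus 3}$, one computes $\Im Z_{\alpha,\beta}(w)=w_1-\beta w_0$ and $\Re Z_{\alpha,\beta}(w)=\tfrac{\alpha^2}{2}w_0-\tfrac{\beta^2}{2}w_0+\beta w_1-w_2$. The numerical wall is the vanishing of $\Re Z(w)\Im Z(u)-\Re Z(u)\Im Z(w)$, and a direct expansion (in which the cubic-in-$\beta$ terms cancel) collapses this to
\[ \tfrac{a}{2}(\alpha^2+\beta^2)+b\beta+c=0, \]
where $a=w_0u_1-w_1u_0$, $b=w_2u_0-w_0u_2$, $c=w_1u_2-w_2u_1$ are the $2\times 2$ minors of the matrix with rows $w,u$. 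Equivalently, the left-hand side is $\det$ of the matrix with rows $\tilde q=(1,\beta,\tfrac{\alpha^2+\beta^2}{2})$, $w$, $u$, so a point lies on the wall precisely when $\tilde q\in\operatorname{span}(w,u)$. For the first bullet, if $a\neq0$ I would complete the square to $\alpha^2+(\beta+\tfrac{b}{a})^2=\tfrac{b^2}{a^2}-\tfrac{2c}{a}$, a circle centered at $(\beta,\alpha)=(-\tfrac{b}{a},0)$ whose upper half is the wall; if $a=0$, i.e. $(u_0,u_1)\parallel(w_0,w_1)$, the equation becomes linear and, substituting $(u_0,u_1)=\lambda(w_0,w_1)$, factors as $(\lambda w_2-u_2)(w_0\beta-w_1)=0$, giving the single vertical wall $\beta=w_1/w_0$ independently of $u$ (the case $w_0=0$ being analogous).

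The heart of the argument is the second bullet. Suppose $p=(\alpha_0,\beta_0)$ with $\alpha_0>0$ lies on two numerical walls $W(w,u)$ and $W(w,u')$. By the determinant reformulation, $\tilde q(p)\in\operatorname{span}(w,u)\cap\operatorname{span}(w,u')$. Provided $\tilde q(p)\notin\R w$, both planes equal $\operatorname{span}(w,\tilde q(p))$, hence coincide, so the two walls are the same curve. It remains to exclude $\tilde q(p)\in\R w$: since $\tilde q$ has first coordinate $1$, this forces $w_0\neq0$, $\beta_0=w_1/w_0$ and $\tfrac{\alpha_0^2+\beta_0^2}{2}=w_2/w_0$, whence $\alpha_0^2=-Q(w)/w_0^2$. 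This is exactly where the support property enters: for a class $w$ of a tilt-semistable object $Q(w)\geq0$, so $\alpha_0^2\leq0$, contradicting $\alpha_0>0$. Thus distinct numerical walls are disjoint; being semicircles centered on the $\beta$-axis, on each side of the vertical wall $\beta=w_1/w_0$ they are then totally ordered by inclusion, which is the asserted nesting.

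For the third bullet, suppose a point $p_0$ of a numerical wall $W$ — defined by a class $v_F$ with $\mu_{p_0}(w)=\mu_{p_0}(v_F)$ — is an actual wall, witnessed by a short exact sequence $0\to F\to E\to G\to0$ in $\Coh^{\beta_0}(Y)$ of $\sigma_{p_0}$-semistable objects of equal slope, with $v(F)=v_F$. The key observation is that $W$ is simultaneously the numerical wall $W(w,v_F)=W(v_F,v(G))$ for all three classes, so the slope identity $\mu(F)=\mu(E)=\mu(G)$ holds at \emph{every} point of $W$. I would then propagate the destabilizing sequence along $W$: by the second bullet applied to $F$ and to $G$, the curve $W$ meets none of the other (disjoint) numerical walls of $F$ or $G$, so these objects cannot be destabilized as one moves along $W$, and they remain $\sigma$-semistable; checking in addition that they stay in the tilted heart $\Coh^{\beta}(Y)$ as $\beta$ varies along $W$ then keeps $0\to F\to E\to G\to0$ a short exact sequence of equal-slope semistable objects, exhibiting each point of $W$ as an actual wall. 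A connectedness (open-and-closed) argument on $W$ packages this.

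I expect the third bullet to be the main obstacle. The genuinely delicate point is controlling heart membership and semistability of the sub- and quotient-objects $F,G$ as $(\alpha,\beta)$ traverses the whole semicircle, i.e. verifying that $F$ and $G$ neither silently leave $\Coh^\beta(Y)$ nor acquire a new destabilizing subobject along the way. By contrast, the first two bullets reduce to the clean conic computation above together with a single structural input, the inequality $Q(w)\geq0$ from the support property.
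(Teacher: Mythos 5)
The paper itself contains no proof of this statement: it is imported wholesale from \cite[Theor.\ 3.3]{Sch20}, so your attempt can only be judged on its own merits. Its computational core is sound and matches the standard argument: the expansion of $\mu_{\alpha,\beta}(w)=\mu_{\alpha,\beta}(u)$ into $\tfrac{a}{2}(\alpha^2+\beta^2)+b\beta+c=0$, the reformulation as $\det(\tilde q,w,u)=0$ with $\tilde q=(1,\beta,\tfrac{\alpha^2+\beta^2}{2})$, and the span argument for the second bullet are all correct. One caveat there: the theorem fixes an \emph{arbitrary} $w\in\Lambda$, so $w$ need not be the class of any tilt-semistable object, and the support property tells you nothing about $Q(w)$; what your exclusion of the degenerate case actually requires is the hypothesis $Q(w)=w_1^2-2w_0w_2\geq 0$. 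This is harmless in the paper's applications (e.g.\ $Q(\pm\kappa_1)=2d>0$), but it is not among the stated hypotheses, and when $Q(w)<0$ every circle $\det(\tilde q,w,u)=0$ passes through the common point $\beta_0=w_1/w_0$, $\alpha_0^2=-Q(w)/w_0^2$, so the picture you rely on genuinely degenerates.

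There are two genuine gaps. First, in the first bullet you deduce nesting from disjointness: ``being semicircles centered on the $\beta$-axis, on each side of the vertical wall they are then totally ordered by inclusion.'' That implication is false: two small disjoint semicircles sitting side by side to the left of the vertical wall are centered on the axis, disjoint, and not nested. Disjointness must be supplemented by a structural fact about walls for the fixed class $w$, for instance: every semicircular numerical wall for $w$ has its two $\beta$-axis endpoints separated by the roots of $H\Ch_2^\beta(w)=0$ (equivalently, its top point lies on the curve $\mu_{\alpha,\beta}(w)=0$, which is part of Schmidt's structure theorem). Two disjoint semicircles whose endpoint pairs straddle the same point cannot lie side by side, hence are nested. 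In your projective picture this is the statement that projection from $[w]$ induces an involution on the boundary conic whose orbit pairs, when $Q(w)>0$, all separate the two real fixed points; that is what kills the side-by-side configuration, and it does not follow from disjointness alone. Second, the third bullet is a plan rather than a proof, as you yourself acknowledge: viewing $W$ simultaneously as a numerical wall for $v(F)$ and $v(G)$ and invoking the second bullet to forbid further destabilization is the right idea, but the two points you defer --- that $F$ and $G$ remain in the \emph{varying} heart $\Coh^\beta(Y)$ as $(\alpha,\beta)$ traverses the whole semicircle (so that the sequence stays a short exact sequence of objects of equal slope), and the open-and-closed bookkeeping along $W$ --- are precisely where the work lies in Schmidt's proof, and nothing in your text carries them out.
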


We then define \textit{chambers} as connected components of complements of actual walls. If $(\alpha,\beta)$ and $(\alpha',\beta')$ belong to the same chamber, then an object $E$ of class $w$ is $\sigma_{\alpha,\beta}$-semistable if and only if it is $\sigma_{\alpha',\beta'}$-semistable.

\subsubsection{Rotation of tilt-stability}\label{def_tilt0-stability} $\sigma^0_{\alpha,\beta}=(\Coh^0_{\alpha,\beta}(Y),Z^0_{\alpha,\beta})$, for $\alpha>0$ and $\beta\in \R$, where
    $$\Coh^0_{\alpha,\beta}(Y)=\Big[\Coh^\beta(Y)_{\mu_{\alpha,\beta}\leq 0}[1],\Coh^\beta(Y)_{\mu_{\alpha,\beta}>0}\Big]$$
    and
    \begin{equation}\label{eq_def_of_Z^0}
      Z^0_{\alpha,\beta}(E)=-iZ_{\alpha,\beta}(E) 
    \end{equation}
    This is also a weak stability condition with respect to $\Lambda$ (\cite[Prop 2.15]{BLMS17}). The corresponding slope function will be denoted with $\mu^0_{\alpha,\beta}$.

Like for tilt-stability, one can define walls and chambers for $\sigma_{\alpha,\beta}^0$ by replacing $\mu_{\alpha,\beta}$ with $\mu_{\alpha,\beta}^0$ and $\Coh^\beta(Y)$ with $\Coh_{\alpha,\beta}^0(Y)$ in Definition \ref{def_of_walls}.

\subsection{Kuznetsov component}
\label{sec_KuzComp}

Let $Y$ be a smooth Fano threefold of Picard rank $1$ and index $2$. 
The derived category of $Y$ admits a semi-orthogonal decomposition
\[ D^b(Y)=\pair{\Ku(Y),\sO_Y,\sO_Y(1)} \]
where the admissible subcategory $\Ku(Y)$ is called the Kuznetsov component \cite{Kuz09_threefolds}. The numerical Grothendieck group $ N(\Ku(Y))\subset N(D^b(Y))$ has rank 2 and is  generated by the classes $$\kappa_1=[ I_\ell]=1-\frac{H^2}{d}\qquad\&\qquad \kappa_2=H-\frac{H^2}{2}-\frac{(6-d)H^3}{6d}.$$
In this basis, the Euler form writes
\[ \begin{pmatrix} -1 & -1 \\ 1-d & -d \end{pmatrix}. \]
It is negative definite, and if $d=1$ the only $-1$ classes are $\pm \kappa_1, \pm \kappa_2$ and $\pm (\kappa_1-\kappa_2)$.

Recall that for $E\in D^b(Y)$ exceptional, the left mutation $\mathbb L_E(-)$ across $E$ is the functor sending $G \in D^b(Y)$ to the cone of the evaluation map $ev$:
\[ \mathbf R\Hom(E,G)\otimes E \xrightarrow{ev} G \to \mathbb L_E(G). \]
The inclusion $\Ku(Y)\subset D^b(Y)$ has an adjoint projection functor $\pi\coloneqq \mathbb L_{\sO_Y} \circ \mathbb L_{\sO_Y(1)}$.

The category $\Ku(Y)$ admits an autoequivalence called the \textit{rotation functor} $$\mathsf R(-)\coloneqq \mathbb L_{\sO_Y} ( - \otimes \sO_Y(1)),$$ and a Serre functor. In fact, the two are related:

\begin{lemma}
\label{lem_serre_dual}
The Serre functor on $\Ku(Y)$ satisfies
\[ \mathsf S_{\Ku(Y)}^{-1} \simeq \mathsf R^2[-3]. \]
\end{lemma}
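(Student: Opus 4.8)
The plan is to deduce the identity from the general formula that expresses the Serre functor of an admissible subcategory through the ambient Serre functor and the projection functor. Write $i\colon \Ku(Y)\hookrightarrow D(Y)$ for the inclusion; since $\Ku(Y)=\pair{\sO_Y,\sO_Y(1)}^{\perp}$, its left adjoint is precisely the projection $i^{*}=\pi=\mathbb L_{\sO_Y}\circ\mathbb L_{\sO_Y(1)}$. Serre duality on $D(Y)$ together with the adjunction $(i^{*},i)$ and the full faithfulness of $i$ gives, for $E,F\in\Ku(Y)$, a chain of natural isomorphisms $\Hom(i^{*}\mathsf S_Y^{-1}iE,F)\simeq \Hom(\mathsf S_Y^{-1}iE,iF)\simeq \Hom(iF,iE)^{*}\simeq\Hom(F,E)^{*}$, which identifies $\mathsf S_{\Ku(Y)}^{-1}\simeq i^{*}\circ \mathsf S_Y^{-1}\circ i$. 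Since $Y$ is a threefold of index $2$, one has $\mathsf S_Y\simeq(-\otimes\sO_Y(-2))[3]$, hence $\mathsf S_Y^{-1}\simeq(-\otimes\sO_Y(2))[-3]$, and therefore $\mathsf S_{\Ku(Y)}^{-1}(E)\simeq\pi\bigl(E\otimes\sO_Y(2)\bigr)[-3]$.

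It then remains to recognize $\pi(-\otimes\sO_Y(2))$ as $\mathsf R^{2}$. I would use that mutation functors are natural with respect to autoequivalences: for the twist $\Phi=(-\otimes\sO_Y(1))$ and the exceptional object $\sO_Y$ one has $\Phi\circ\mathbb L_{\sO_Y}\simeq \mathbb L_{\Phi\sO_Y}\circ\Phi=\mathbb L_{\sO_Y(1)}\circ\Phi$. Applying this to the argument $E\otimes\sO_Y(1)$ yields $\bigl(\mathbb L_{\sO_Y}(E\otimes\sO_Y(1))\bigr)\otimes\sO_Y(1)\simeq \mathbb L_{\sO_Y(1)}\bigl(E\otimes\sO_Y(2)\bigr)$. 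Unwinding $\mathsf R^{2}(E)=\mathbb L_{\sO_Y}\bigl(\mathsf R(E)\otimes\sO_Y(1)\bigr)$ and substituting gives $\mathsf R^{2}(E)\simeq \mathbb L_{\sO_Y}\mathbb L_{\sO_Y(1)}\bigl(E\otimes\sO_Y(2)\bigr)=\pi\bigl(E\otimes\sO_Y(2)\bigr)$. Combining this with the previous paragraph produces $\mathsf S_{\Ku(Y)}^{-1}(E)\simeq \mathsf R^{2}(E)[-3]$, which is the claim.

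The routine but essential points to check are bookkeeping ones: that $\pi$ is genuinely the \emph{left} adjoint of $i$ (so that it appears on the outside in $\mathsf S^{-1}_{\Ku(Y)}=i^{*}\mathsf S_Y^{-1}i$, rather than the right adjoint), and that $\pi$ really lands in $\Ku(Y)$, which follows from $\Hom(\sO_Y(1),\sO_Y)=0$ so that $\mathbb L_{\sO_Y}$ preserves $\sO_Y(1)^{\perp}$. The only genuine input beyond formal manipulation is the naturality isomorphism $\Phi\circ\mathbb L_E\simeq\mathbb L_{\Phi E}\circ\Phi$ for an autoequivalence $\Phi$, which I expect to be the main thing to justify; it is, however, immediate from the functorial triangle defining $\mathbb L_E$, since $\Phi$ carries the evaluation triangle of $G$ to that of $\Phi G$. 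Everything else is a direct manipulation of the two triangles defining $\pi$ and $\mathsf R$.
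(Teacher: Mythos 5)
Your proof is correct and follows essentially the same route as the paper: reduce to $\mathsf S_{\Ku(Y)}^{-1}\simeq \pi\circ \mathsf S_Y^{-1}$, use $\mathsf S_Y^{-1}\simeq(-\otimes\sO_Y(2))[-3]$, and then identify $\pi(-\otimes\sO_Y(2))$ with $\mathsf R^2$. The only difference is one of exposition: the paper cites \cite[Lemma 2.7]{Kuz14} for the first identity and calls the second ``straightforward to check,'' whereas you supply both verifications (the adjunction/Serre-duality argument and the compatibility $\Phi\circ\mathbb L_E\simeq\mathbb L_{\Phi E}\circ\Phi$), which are exactly the intended justifications.
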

\begin{proof}
By \cite[Lemma 2.7]{Kuz14}, we have that $\mathsf S_{\Ku(Y)}^{-1}\simeq \pi \circ \mathsf S_Y^{-1}$.
It is then straightforward to check that 
\[ \pi \mathsf S_Y^{-1}(E)=\pi(E(2))[-3] = \mathbb L_\sO (\mathbb L_{\sO(1)} (E(2)))[-3] \simeq \mathsf R^2(E)[-3]. \qedhere\]
\end{proof}

One of the results of \cite{BLMS17} is that $\Ku(Y)$ supports stability conditions. Define the set
\begin{equation}
\label{eq_def_of_V}
    V=\Big\{(\alpha,\beta)\in \R_{>0}\times \R\st 0<\alpha<\min\{-\beta,\beta+1\},-1<\beta<0\Big\},
\end{equation}
then we have:
\begin{thm}[{\cite[Theor. 6.8]{BLMS17}}]
\label{thm_stab_cond_on_Ku}
For any $(\alpha,\beta)\in V$, the weak stability condition  $\sigma^0_{\alpha,\beta}$ from Section  \ref{def_tilt0-stability}, induces a Bridgeland stability condition $\sigma(\alpha,\beta)$ on $\Ku(Y)$, with heart given by
$$\sA(\alpha,\beta)\coloneqq \Coh^0_{\alpha,\beta}(Y)\cap \Ku(Y)$$
and central charge $Z_{\alpha,\beta|K(\sA)}^0$.
We will denote the slope function of $\sigma(\alpha,\beta)$ with $\mu(\alpha,\beta)$.
\end{thm}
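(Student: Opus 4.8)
The statement is a special case of the general inducing mechanism of \cite[Prop.~5.1]{BLMS17}, so the plan is to verify its hypotheses for the weak stability condition $\sigma^0_{\alpha,\beta}$ on $D^b(Y)$ together with the subcategory $\Ku(Y)$. Two things must be checked: (i) that $\sA=\Coh^0_{\alpha,\beta}(Y)\cap\Ku(Y)$ is the heart of a bounded $t$-structure on $\Ku(Y)$, and (ii) that $Z^0_{\alpha,\beta}$ restricts to a genuine stability function on $\sA$, i.e.\ $Z^0_{\alpha,\beta}(E)\neq 0$ for every nonzero $E\in\sA$. Once (i) and (ii) hold, the weak inequality defining $\sigma^0_{\alpha,\beta}$ is automatically upgraded to the strict one required of a stability function, the Harder--Narasimhan property is inherited from the ambient weak stability condition together with noetherianity of the heart, and the support property holds with the restriction to $N(\Ku(Y))$ of the quadratic form $Q$ of Section \ref{def_tilt-stability}.

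The essential geometric input, and the step I expect to be the main obstacle, is to locate the exceptional objects $\sO_Y$ and $\sO_Y(1)$ inside the doubly-tilted heart. First I would show that $\sO_Y$ and $\sO_Y(1)$ are $\sigma_{\alpha,\beta}$-tilt-stable for all $(\alpha,\beta)\in V$: this is the classical stability of line bundles in tilt-stability, proved by combining the Bogomolov-type bound encoded in $Q$ with Bertram's Nested Wall Theorem (Theorem \ref{thm_Nested_Wall}) to rule out actual walls crossing $V$. I would then transport this through the rotation defining $\Coh^0_{\alpha,\beta}(Y)$ and verify that on $V$ both objects lie in the heart $\Coh^0_{\alpha,\beta}(Y)$ and remain $\sigma^0_{\alpha,\beta}$-stable. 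The inequalities cutting out $V$ in \eqref{eq_def_of_V}, namely $-1<\beta<0$ and $0<\alpha<\min\{-\beta,\beta+1\}$, are precisely what force this placement.

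With $\sO_Y$ and $\sO_Y(1)$ controlled, condition (i) follows from the heart-restriction part of \cite[Prop.~5.1]{BLMS17}: using the semiorthogonal decomposition $D^b(Y)=\langle\Ku(Y),\sO_Y,\sO_Y(1)\rangle$ together with the $\Hom$-vanishings that follow from the stability of the two line bundles, one shows that every object of $\Coh^0_{\alpha,\beta}(Y)$ admits a filtration with factors in $\sA$ and factors among shifts of $\sO_Y,\sO_Y(1)$, so that taking $\sA$-cohomology is well defined on $\Ku(Y)$.

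For condition (ii) there are two kinds of degeneracy to exclude. The numerical one is handled by the computation that $\ker Z^0_{\alpha,\beta}\cap N(\Ku(Y))=\{0\}$ for every $(\alpha,\beta)\in V$: writing a class as $m\kappa_1+n\kappa_2$ and imposing $Z^0_{\alpha,\beta}=0$ forces, after a short calculation, an equation of the shape $\alpha^2=-\tfrac{2}{d}-\beta-\beta^2$, whose right-hand side is negative for all $\beta\in(-1,0)$ and all relevant $d$, so there is no nonzero solution; consequently every nonzero $E\in\sA$ of nonzero numerical class has $Z^0_{\alpha,\beta}(E)\neq0$. The remaining case, nonzero objects of trivial class $v(E)=0$, is exactly the source of the weakness of the ambient condition: such objects are governed by zero-dimensional torsion sheaves, which are not contained in $\Ku(Y)$ since $\Hom(\sO_Y,\sO_x)\neq0$. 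Hence $\sA$ contains no such object, and $Z^0_{\alpha,\beta}$ is a genuine stability function on $\sA$. Granting (i) and (ii), \cite[Prop.~5.1]{BLMS17} produces the stability condition $\sigma(\alpha,\beta)$ with the stated heart and central charge.
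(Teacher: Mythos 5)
This statement is not proved in the paper at all: it is imported verbatim as \cite[Theorem 6.8]{BLMS17}, so there is no internal proof to compare against. Your proposal is a correct reconstruction of the argument that BLMS17 themselves give for that theorem — reduction to their inducing criterion \cite[Prop.~5.1]{BLMS17}, placement of the tilt-stable exceptional objects $\sO_Y$ and $\sO_Y(1)$ in $\Coh^0_{\alpha,\beta}(Y)$ using the inequalities defining $V$, the check that $\ker Z^0_{\alpha,\beta}\cap N(\Ku(Y))=0$, and the exclusion of zero-dimensional torsion sheaves from $\Ku(Y)$ via $\Hom(\sO_Y,\sO_x)\neq 0$ — so it agrees in substance with the cited proof.
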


The set of stability conditions on $\Ku(Y)$ is denoted $\Stab(\Ku(Y))$, it is a complex manifold and it admits the following group actions:
\begin{itemize}
    \item[--] The universal cover $\widetilde \GL_2^+(\R)$ acts on the right: an element of $\widetilde \GL_2^+(\R)$ is a pair $\tilde g=(g,M)$ where $g\colon \R \to \R$ is an increasing function such that $g(\phi+1)=g(\phi)+1$, and $M\in \GL_2^+(\R)$ satisfies $Me^{i\phi\pi} \in \R_{>0}e^{ig(\phi)\pi}$. Given a stability condition $\sigma= (Z,\PP)\in \Stab(\Ku(Y))$, we define $\sigma \cdot \tilde g = (Z',\PP')$ to be the stability condition with $Z'=M^{-1}\circ Z$ and $\PP'(\phi)=\PP(g(\phi))$ (see footnote \ref{foot_Slicing}). Stability is preserved under this action: an object $E\in \Ku(Y)$ is $\sigma$-stable if and only if it is $\sigma\cdot \tilde g$-stable for all $\tilde g\in \widetilde \GL_2^+(\R)$. 
    \item[--] An autoequivalence $\Phi$ of $\mathsf T$ acts on the left: for $\sigma$ as above we set
    \[\Phi \cdot \sigma \coloneqq (Z(\Phi^{-1}_*(-)),\Phi(\PP)),\]
    where $\Phi_*$ is the automorphism of $K(\Ku(Y))$ induced by $\Phi$.
\end{itemize}

Fix $0<\alpha<\frac 12$. Denote by $\sK$ the $\widetilde{\GL}_2^+(\R)$-orbit of the stability condition $\sigma(\alpha,-\frac 12)$ in $\Stab(\Ku(Y))$. Then we have:

\begin{proposition}[{\cite[Prop 3.6]{PY20}}]
\label{prop_3.6PY}
For all $(\alpha,\beta)\in V$, $\sigma(\alpha,\beta)\in \sK$.
\end{proposition}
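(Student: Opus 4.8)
The plan is to deduce the statement from a purely topological argument: the region $V$ is connected, the $\widetilde{\GL}_2^+(\R)$-orbits in $\Stab(\Ku(Y))$ are open, and the assignment $(\alpha,\beta)\mapsto\sigma(\alpha,\beta)$ is continuous; a connected set mapping continuously into a space partitioned into open pieces must land in a single piece. First I would record that $V$ is convex: each of the defining conditions $\alpha>0$, $-1<\beta<0$, $\alpha+\beta<0$ and $\alpha-\beta<1$ cuts out a half-plane, so their intersection $V$ is a (triangular) convex region, hence connected. The base point $(\alpha,-\tfrac12)$ used to define $\sK$ lies in $V$ precisely because $0<\alpha<\tfrac12$, so it suffices to prove that the image of $V$ under $(\alpha,\beta)\mapsto\sigma(\alpha,\beta)$ lies in a single orbit.

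Next I would argue that the orbits are open. Since $N(\Ku(Y))$ has rank $2$, Bridgeland's theorem \cite{Bri07_triang_cat} presents $\Stab(\Ku(Y))$ as a manifold for which the central-charge map $\sigma\mapsto Z$ is a local homeomorphism onto an open subset of $\Hom(N(\Ku(Y)),\C)\cong\C^2$; thus $\Stab(\Ku(Y))$ is a real $4$-dimensional manifold. The group $\widetilde{\GL}_2^+(\R)$ is likewise a real $4$-dimensional Lie group, and it acts freely. An orbit map of a free action is an injective immersion, and between manifolds of equal dimension an injective immersion is a local diffeomorphism onto an open set; hence every orbit is open. Consequently the orbits form a partition of $\Stab(\Ku(Y))$ into (disjoint) open sets.

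Finally, the family $(\alpha,\beta)\mapsto\sigma(\alpha,\beta)$ is continuous: the central charges $Z^0_{\alpha,\beta}$ depend polynomially on $(\alpha,\beta)$, and by Bridgeland's deformation theorem these lift to a continuous path of stability conditions. Since $V$ is connected, its continuous image is connected, and a connected set meeting a disjoint open cover lies in one member of the cover; therefore all $\sigma(\alpha,\beta)$ with $(\alpha,\beta)\in V$ lie in the orbit of $\sigma(\alpha,-\tfrac12)$, which is exactly $\sK$. The main obstacle is the continuity statement: one must verify that the construction of Theorem \ref{thm_stab_cond_on_Ku} really yields a \emph{continuous} map $V\to\Stab(\Ku(Y))$, i.e.\ that for parameters near $(\alpha,\beta)$ the stability condition $\sigma(\alpha',\beta')$ is the unique small deformation of $\sigma(\alpha,\beta)$ with central charge $Z^0_{\alpha',\beta'}$. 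This is where the real input lies, as it requires the support property to hold uniformly over $V$ so that Bridgeland's deformation result applies along the whole family.
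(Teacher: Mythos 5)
This proposition is stated in the paper purely as a citation of Pertusi--Yang; no proof is given here, so your attempt can only be measured against the cited source, and the topological strategy you outline (connectedness of $V$, continuity of the family, openness of orbits) is indeed the natural route. However, two of your steps have genuine problems. The claim that the $\widetilde{\GL}_2^+(\R)$-action on $\Stab(\Ku(Y))$ is free and that \emph{every} orbit is open is false in general: both fail at stability conditions whose central charge is degenerate as a real-linear map $N(\Ku(Y))\otimes\R\to\C$, since there the stabilizer contains a positive-dimensional group (lifts of shears fixing the image line pointwise) and the orbit lies inside the proper closed locus of degenerate charges, so it cannot be open; such degenerate stability conditions do occur on rank-two lattices (already for $D^b(\pr 1)$). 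What your argument needs --- and all it needs --- is openness of the orbits through the points $\sigma(\alpha,\beta)$ themselves, and this holds only because $Z^0_{\alpha,\beta}(\kappa_1)$ and $Z^0_{\alpha,\beta}(\kappa_2)$ are $\R$-linearly independent for all $(\alpha,\beta)$: the relevant determinant equals $d\bigl(1+\tfrac d2(\alpha^2+\beta^2+\beta)\bigr)$, which is positive because $\alpha^2+\beta^2+\beta\geq-\tfrac14$ and $d\leq 5$. You neither state nor check this non-degeneracy, and your dimension count (``injective immersion between equal-dimensional manifolds'') silently presupposes it.

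The second and more serious gap is the one you flag yourself: continuity of $(\alpha,\beta)\mapsto\sigma(\alpha,\beta)$ is not a side condition but the actual content of the proposition, and you do not prove it. Note also that your proposed fix --- a uniform support property so that Bridgeland's deformation theorem applies --- is not by itself sufficient: the deformation theorem produces \emph{some} stability condition near $\sigma(\alpha,\beta)$ with central charge $Z^0_{\alpha',\beta'}$, but two stability conditions with the same central charge need not coincide, so one must still identify this deformation with the stability condition $\sigma(\alpha',\beta')$ produced by the construction of Theorem \ref{thm_stab_cond_on_Ku}. This identification (for instance by comparing the hearts $\Coh^0_{\alpha,\beta}(Y)\cap\Ku(Y)$ for nearby parameters, or via a covering-space argument for the support-constrained stability manifold over the space of central charges) is precisely where the real work lies and is what the cited source supplies. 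As written, your proposal is a correct reduction of the statement to these two unproved inputs, not a proof of it.
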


Another result of \cite{PY20} is the following:

\begin{proposition}[{\cite[Prop. 5.7]{PY20}}]
\label{lem_sec.5.1PY}
If $Y$ is a Fano threefold of Picard rank 1 and index 2, then there exists $\tilde g \in \widetilde{\GL}_2^+(\R)$ such that
\[ \mathsf R \cdot \sigma(\alpha,-\frac 12) = \sigma(\alpha,-\frac12) \cdot \tilde g. \]
\end{proposition}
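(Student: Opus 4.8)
The plan is to verify, straight from the definitions of the left action of an autoequivalence and the right action of $\widetilde{\GL}_2^+(\R)$, the two conditions that pin down the claimed equality. Write $\sigma=\sigma(\alpha,-\tfrac12)=(Z,\PP)$ with heart $\sA=\PP((0,1])=\Coh^0_{\alpha,-1/2}(Y)\cap\Ku(Y)$ and $Z=Z^0_{\alpha,-1/2}$. The left action gives $\mathsf R\cdot\sigma=(Z\circ\mathsf R_*^{-1},\mathsf R(\PP))$, while for $\tilde g=(g,M)$ the right action gives $\sigma\cdot\tilde g=(M^{-1}\circ Z,\PP(g(\cdot)))$. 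Thus it suffices to produce $M\in\GL_2^+(\R)$ with $Z\circ\mathsf R_*^{-1}=M^{-1}\circ Z$ (equivalently $M\circ Z=Z\circ\mathsf R_*$ on $N(\Ku(Y))\otimes\R$), and then, for an appropriate lift $g$ of $M$, to show the hearts coincide: $\mathsf R(\sA)=\PP((g(0),g(0)+1])$. I would stress from the start that $\mathsf R\cdot\sigma$ is \emph{automatically} a stability condition (the left action is well defined on $\Stab(\Ku(Y))$), so nothing has to be proved about stability from scratch; only the comparison of the two slicings is at stake.

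For the numerical condition I would compute the lattice automorphism $\mathsf R_*$ of $N(\Ku(Y))$ in the basis $\kappa_1,\kappa_2$. Using $\mathsf R(-)=\mathbb L_{\sO_Y}(-\otimes\sO_Y(1))$, the effect on Chern characters is multiplication by $e^{H}$ followed by subtraction of the $\sO_Y$-component dictated by the evaluation map of the mutation; since $\mathsf R$ is an endofunctor of $\Ku(Y)$ the result lies in $N(\Ku(Y))$, and a direct computation (uniform in $d$) yields an explicit integral $2\times 2$ matrix $\mathsf R_*$. As $\mathsf R$ is an autoequivalence, $\mathsf R_*$ preserves the Euler form, whose matrix has determinant $1$; hence $\det\mathsf R_*=\pm1$, and a short check of the entries gives $\det\mathsf R_*=+1$. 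Setting $M$ to be the conjugate of $\mathsf R_*$ by the $\R$-linear isomorphism $N(\Ku(Y))\otimes\R\xrightarrow{\sim}\C$ induced by $Z$ produces $M\in\GL_2(\R)$ satisfying $M\circ Z=Z\circ\mathsf R_*$ by construction, with $\det M=\det\mathsf R_*>0$, so $M\in\GL_2^+(\R)$. The relation $\mathsf S_{\Ku(Y)}^{-1}\simeq\mathsf R^2[-3]$ of Lemma \ref{lem_serre_dual} provides a consistency check: $M^2$ must match the matrix induced by $\mathsf S^{-1}[3]$.

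The main obstacle is the comparison of the slicings, that is, proving $\mathsf R(\sA)=\PP((\psi,\psi+1])$ for the window dictated by $M$. Unwinding the previous paragraph, the central charges of $\mathsf R\cdot\sigma$ and $\sigma\cdot\tilde g$ already agree on every class; since a stability condition is recovered from its heart together with its central charge, it then suffices to identify the two hearts, and this is equivalent to showing that $\mathsf R$ sends $\sigma$-semistable objects to $\sigma$-semistable objects, with phases transformed by $g$. This is the genuinely geometric step. I would attack it by decomposing $\mathsf R$ into the twist $-\otimes\sO_Y(1)$ and the mutation $\mathbb L_{\sO_Y}$ and tracking $\sA$ through each: the twist is controlled at the level of the twisted Chern character, whereas the mutation must be analyzed using the structure of $\Coh^0_{\alpha,\beta}(Y)$ and the nested-wall behaviour of tilt-stability (Theorem \ref{thm_Nested_Wall}) to guarantee that the image of a $\sigma$-semistable object acquires no destabilizing sub- or quotient object of phase outside the prescribed window.

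As a cleaner way to close this last step, I would observe that $\mathsf R(\sA)$ is a bounded heart all of whose nonzero objects have $Z\circ\mathsf R_*^{-1}$-values in the half-plane determined by $M$; by Bridgeland's theorem the forgetful map to central charges is a local homeomorphism, so once $\mathsf R(\sA)$ is shown to lie inside a slightly enlarged window $\PP((\psi-\epsilon,\psi+1+\epsilon))$ of the $\sigma$-slicing, the coincidence of central charges forces $\mathsf R\cdot\sigma=\sigma\cdot\tilde g$. In either route the crux, and the step I expect to require the most work, is controlling the mutation functor $\mathbb L_{\sO_Y}$ on the abelian heart $\sA$.
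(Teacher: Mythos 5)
Your reduction is set up correctly: with the paper's conventions for the two actions, the equality $\mathsf R\cdot\sigma(\alpha,-\tfrac12)=\sigma(\alpha,-\tfrac12)\cdot\tilde g$ amounts to (a) finding $M\in\GL_2^+(\R)$ with $M\circ Z=Z\circ\mathsf R_*$ on $N(\Ku(Y))\otimes\R$, and (b) showing that for a suitable lift $g$ of $M$ the slicings agree, equivalently $\mathsf R(\sA)=\PP((g(0),g(0)+1])$. Your treatment of (a) is essentially complete: $\mathsf R_*$ can be computed uniformly in $d$ from $\mathsf R_*(v)=v\cdot e^{H}-\chi(\sO_Y,v\cdot e^{H})[\sO_Y]$ (for $d=1$ it is the matrix $\begin{pmatrix}0&-1\\1&1\end{pmatrix}$ of Corollary \ref{cor_matrix_of_R*}), it has determinant $1$ since it preserves the unimodular Euler pairing, and conjugating by $Z$ (which one checks is injective on $N(\Ku(Y))\otimes\R$) gives $M\in\GL_2^+(\R)$. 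The genuine gap is (b): you never prove that $\mathsf R$ carries $\sigma$-semistable objects to $\sigma$-semistable objects with phases transformed by $g$. You name two strategies --- tracking $\sA$ through $-\otimes\sO_Y(1)$ and $\mathbb L_{\sO_Y}$ separately, or placing $\mathsf R(\sA)$ in an enlarged window and invoking a rigidity statement --- and then explicitly defer both, writing that controlling $\mathbb L_{\sO_Y}$ on the heart is "the step I expect to require the most work." But this step is the entire mathematical content of the proposition: step (a) is linear algebra available for any autoequivalence whatsoever, while step (b) is where the geometry of $\Ku(Y)$, the special choice $\beta=-\tfrac12$, and the structure of the double-tilted heart $\Coh^0_{\alpha,-1/2}(Y)\cap\Ku(Y)$ actually enter. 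A proposal whose crux is flagged but not executed is a plan, not a proof.

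For comparison: the paper itself offers no proof of this statement --- it imports it verbatim from \cite[Prop. 5.7]{PY20} --- and in that source the substance of the argument is precisely the verification you defer, namely the compatibility of $\mathsf R$ with the slicing/heart. So nothing in your write-up could replace the citation: what you completed is the routine part, and what you left open is exactly what the cited proof supplies. Two further cautions should you try to close the gap along your "cleaner" route: the claim that equal central charges plus containment of $\mathsf R(\sA)$ in an enlarged window forces equality needs a precise statement (e.g.\ Bridgeland's lemma that two stability conditions with the same central charge and slicings at distance $<1$ coincide), and you should not lean on the paper's geometric computation of $\mathsf R_*$ via the objects $E_p$, $M_p$, $F_p$ (Lemmas \ref{lem_R(Ep)=Mp} and \ref{lem_R(Mp)=Fip}), since that chain of identifications is special to $d=1$, whereas the proposition is asserted for all Fano threefolds of Picard rank $1$ and index $2$.
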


For $\sigma\in \sK$ and $\kappa\in N(\Ku(Y))$, we write $M_\sigma(\kappa)$ the moduli space of $\sigma$-stable objects of class $\kappa$ in $\Ku(Y)$. As an immediate consequence of Prop. \ref{lem_sec.5.1PY} we have:
\begin{corollary}
\label{cor_iso_of_moduli}
For all $n\in \Z$, there is an isomorphism
\begin{equation*}
M_\sigma(\kappa)\simeq M_\sigma (\mathsf R^n_*\kappa).
\end{equation*}
\end{corollary}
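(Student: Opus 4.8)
The plan is to deduce the statement formally from Proposition~\ref{lem_sec.5.1PY}, exploiting the interplay of the left and right actions on $\Stab(\Ku(Y))$ recorded above. First I would reduce to a convenient representative. Since $\sK$ is by definition a single $\widetilde{\GL}_2^+(\R)$-orbit, and the right action leaves the set of (semi)stable objects unchanged — an object is $\sigma$-stable if and only if it is $\sigma\cdot\tilde g$-stable — the moduli functor $M_{\sigma'}(\kappa)$ is literally independent of the choice of $\sigma'\in\sK$; it is the same moduli space for every representative. Hence I may fix $\sigma=\sigma(\alpha,-\frac{1}{2})$ throughout.

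Next I would recall the basic compatibility of an autoequivalence with moduli. For any autoequivalence $\Phi$ of $\Ku(Y)$, the definition $\Phi\cdot\sigma=(Z(\Phi^{-1}_*(-)),\Phi(\PP))$ is arranged precisely so that $E$ is $\sigma$-semistable of phase $\phi$ if and only if $\Phi(E)$ is $(\Phi\cdot\sigma)$-semistable of phase $\phi$, with central charges related by $Z'(\Phi_*w)=Z(w)$. Since $\mathsf R=\mathbb L_{\sO_Y}(-\otimes\sO_Y(1))$ is of Fourier--Mukai type, it operates in families, and therefore $\mathsf R^n$ induces an isomorphism of moduli spaces
\[ \mathsf R^n\colon M_\sigma(\kappa)\xrightarrow{\ \sim\ } M_{\mathsf R^n\cdot\sigma}(\mathsf R^n_*\kappa),\qquad E\mapsto \mathsf R^n(E). \]

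Now I would combine the two ingredients. Proposition~\ref{lem_sec.5.1PY} produces $\tilde g\in\widetilde{\GL}_2^+(\R)$ with $\mathsf R\cdot\sigma=\sigma\cdot\tilde g$. Because the left and right actions on $\Stab(\Ku(Y))$ commute, a one-line induction gives $\mathsf R^n\cdot\sigma=\sigma\cdot\tilde g^{\,n}$ for every $n\in\Z$ (the case $n<0$ using the autoequivalence $\mathsf R^{-1}$ together with $\tilde g^{-1}$). Feeding this into the displayed isomorphism and using once more that the right $\widetilde{\GL}_2^+(\R)$-action does not change the moduli space, I obtain
\[ M_\sigma(\kappa)\simeq M_{\mathsf R^n\cdot\sigma}(\mathsf R^n_*\kappa)=M_{\sigma\cdot\tilde g^{\,n}}(\mathsf R^n_*\kappa)=M_\sigma(\mathsf R^n_*\kappa), \]
which is the desired claim.

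The only genuinely non-formal point, and hence the step I would treat most carefully, is the upgrade from a bijection on closed points to an isomorphism of moduli \emph{spaces}: one must know that $\mathsf R$ (hence $\mathsf R^n$) transforms flat families of $\sigma$-stable objects into flat families of $(\mathsf R^n\cdot\sigma)$-stable objects of the correct class. This is standard for the rotation functor, which is a composition of tensoring by a line bundle and a left mutation across an exceptional object, each of Fourier--Mukai type; everything else is the formal bookkeeping of the two commuting group actions.
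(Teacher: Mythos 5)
Your proposal is correct and is precisely the argument the paper has in mind: the paper derives the corollary as an ``immediate consequence'' of Proposition \ref{lem_sec.5.1PY}, i.e.\ exactly your combination of the autoequivalence action on moduli, the identity $\mathsf R\cdot\sigma=\sigma\cdot\tilde g$ iterated via the commuting left and right actions, and the fact that the $\widetilde{\GL}_2^+(\R)$-action does not change the set of stable objects. Your added care about the family-level statement (via $\mathsf R$ being of Fourier--Mukai type) is a reasonable expansion of what the paper leaves implicit.
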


\section{Lines on a Veronese double cone}
\label{sec_Fano_scheme_of_lines}

\subsection{Veronese double cones}\label{sec_Veronese_double_cones}
We fix some notation and recall some general results on Veronese double cones, following \cite{Iskovskih1977} and \cite{HK15}.
Let $Y$ be a hypersurface cut out by a sextic equation in the weighted projective space $\mathbb P\coloneqq \pr{}(1,1,1,2,3)$. Let $x_0,...,x_4$ be coordinates of $\mathbb P$, where $x_3$ and $x_4$ are those of weight 2,3 respectively. By completing a square, we can write the equation for $Y$ as $x_4^2=f_6(x_0,...,x_3)$ where $f_6$ is a degree 6 polynomial.
The linear series $H\coloneqq  \sO_{\mathbb P}(1)_{|Y}$ has three sections and a unique base point $y_0$ \cite[Prop. 3.1]{Iskovskih1977}, hence it induces a rational map $\phi_H\colon Y \dashrightarrow \mathbb{P}(H^0(\sO_Y(1)))\simeq \pr 2$. On the other hand, $2H\sim -K_Y$ is base point free, and induces a morphism $\phi_{2H}\colon Y \to \mathbb{P}(H^0(\sO_Y(2)))\simeq\pr 6$, whose image $K\simeq \mathbb P(1,1,1,2)$ is the cone over a Veronese surface with vertex $k\coloneqq \phi_{2H}(y_0)$. 

More precisely, for  $V\coloneqq H^0(\sO_Y(1))$ we have 
\[ H^0(\sO_Y(2))=\Sym^2 V \oplus \pair{x_3}, \]
and the map 
\begin{equation}\label{eq_map_i}
    \begin{split}
    i\colon & V\oplus \pair{x_3} \to \Sym^2 V \oplus \pair{x_3}\\
    &\phantom{a}(v,r)\phantom{aa} \mapsto \phantom{aa}(v^2,r)
    \end{split}
\end{equation}
embeds $\mathbb{P}(V\oplus 0)$ as a Veronese surface and identifies the cone over $\mathbb{P}(V\oplus 0)$ and vertex $k=\mathbb{P}(0\oplus \pair{x_3})$ with $K$.

The morphism $\phi_{2H}$ is smooth of degree 2 outside $k$ and the divisor $W\coloneqq \{f_6=0\}\in \abs{\sO_K(3)}$. For this reason, $Y$ is often referred to as to a \textit{Veronese double cone}. We will denote by $\iota$ the involution on $Y$ corresponding to the double cover $\phi_{2H}$.

There is a commutative diagram
\begin{equation}
\label{eq_diagram_rational_maps}
\begin{tikzcd}
	{Y} & {K} \\
	{\pr 2}
	\arrow["{\phi_H}"', from=1-1, to=2-1, dashed]
	\arrow["{\phi_{2H}}", from=1-1, to=1-2]
	\arrow["{\eta}", from=1-2, to=2-1, dashed]
\end{tikzcd}
\end{equation}
where $\eta$ is the projection from $k$. Consider the blowup $\sigma_K\colon \widetilde{K}\to K$ of the vertex $k$ with exceptional divisor $E$. Then, the blow-up $\widetilde{Y} = Y\times_K \widetilde{K}$ resolves the indeterminacy of diagram \eqref{eq_diagram_rational_maps}:
\begin{equation*}
\begin{tikzcd}
	\widetilde{Y} & \widetilde{K} & \\
	{Y} & {K} & {\pr 2}
	\ar[swap, "{\widetilde{\phi}}"', from=1-1, to=1-2]
	\arrow[swap, "{\phi_{2H}}", from=2-1, to=2-2]
	\arrow[swap, "{\sigma_Y}", from=1-1, to=2-1]
	\arrow[swap, "{\sigma_K}", from=1-2, to=2-2]
	\arrow["{\widetilde{\eta}}", from=1-2, to=2-3]
	\arrow[swap, "{\eta}", from=2-2, to=2-3, dashed]
\end{tikzcd}
\end{equation*}
where $\widetilde{\phi}\colon \widetilde{Y}\to \widetilde{K}$ is a degree 2 cover ramified over the divisor $E\cup \sigma_K^{-1}(W)$.

The map $\eta$ restricted to $W$ is a 3-to-1 cover of $\pr 2$, and it ramifies at a curve $C_0$. Throughout this section, we assume that $Y$ is smooth and that $C_0$ is irreducible and general in moduli (this is the generality assumption used in \cite{Tihomirov1982}, whose results we will use).

\subsection{Stable sheaves of class \texorpdfstring{$\kappa_1$}{k1} on \texorpdfstring{$Y$}{Y}}

Let $M_G(v)$ denote the moduli space of stable sheaves of class $v$ on $Y$. Objects in $M_G(v)$ are related to subschemes of $Y$ with Hilbert polynomial $t+1$, we start by studying those.  

\begin{definition}
A \textit{line} in $Y$ is a smooth subscheme of pure dimension 1 with Hilbert polynomial $t+1$.
\end{definition}

In particular, for every line $L$ we have $H.L=1$. We say that the degree of a curve $C\subset Y$ is the integer $H.C$: thus, lines are rational curves of degree 1 in $Y$. 

A similar definition holds for lines and conics in $K$: let $j \colon K \to \pr 6$ the embedding induced by the map $i$ of Eq. \eqref{eq_map_i}. We use the notation $K^\circ \coloneqq K\setminus\{k\}$. 

\begin{definition}[{\cite[Def. 3.1]{HK15}}]
A curve $C$ in $K$ is a \textit{line} (resp. a \textit{conic}) if the closure of its image $j(C\cap K^\circ)$ is a line (resp. a conic) in $\mathbb{P}^6$.
\end{definition}

Lines and conics in $K$ are described in \cite[Sec. 3]{HK15}. Lines in $K$ are the closure of fibers of the projection $\eta\colon K^\circ \to \pr 2$, conics of $K$ are smooth (in which case they do not contain the vertex $k$), or the union of two lines (possibly doubled). For the rest of the section, we use the shorthand $\phi\coloneqq \phi_{2H}$.

\begin{lemma}\label{lem_curves_degree_1}
Let $C$ be a degree 1 curve in $Y$. Then the image $c\coloneqq \phi(C)$ is a conic in $K$, which intersects $W$ in three (possibly coinciding) points with multiplicity 2. There are two possibilities:
\begin{itemize}
    \item[--] $p_a(C)=0$: $c$ is a smooth conic in $K$. In this case, $C$ is a line, and $\phi^{-1}(c)=C\cup C'$ where $C'$ is also a line.
    \item[--] $p_a(C)=1$: $c$ is a doubled line. Then  $C$ is a smooth curve of genus 1, or a singular rational curve.
\end{itemize}
\end{lemma}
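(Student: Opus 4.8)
The plan is to study the finite degree-two morphism $\phi\coloneqq\phi_{2H}\colon Y\to K$ restricted to $C$, and to read off both the genus of $C$ and the configuration of $c\cap W$ from the splitting behaviour of the induced double cover together with a branch-point count. Throughout I take $C$ to be integral.

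First I would pin down $c$ and the degree of $\phi|_C\colon C\to c$. Since $\phi^*\sO_K(1)\simeq\sO_Y(2H)$, the projection formula gives
\[ \deg(\phi|_C)\cdot\big(\sO_K(1)\cdot c\big)=2H\cdot C=2 . \]
Because $\phi$ is finite of degree $2$, $\deg(\phi|_C)\in\{1,2\}$, so exactly two cases occur: either $\phi|_C$ is birational and $c$ has degree $2$, or $\phi|_C$ has degree $2$ and $c=\ell$ has degree $1$ with $\phi_*[C]=2[\ell]$. An integral degree-$2$ curve in projective space is a smooth plane conic, and by the classification of conics in $K$ such a conic avoids the vertex $k$; a degree-$1$ curve in $K$ is a line $\ell$, i.e. the closure of a fibre of $\eta$, which passes through $k$. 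This already produces the dichotomy ``smooth conic'' versus ``doubled line''.

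In the birational case I would argue that $\phi^{-1}(c)\to c\cong\pr 1$ is a finite degree-$2$ cover containing $C$ as a component mapping isomorphically (a finite birational morphism to a smooth curve is an isomorphism), so the cover must split: $\phi^{-1}(c)=C\cup C'$. Using the equation $x_4^2=f_6$ of $Y$, splitting over $c$ is equivalent to $f_6|_c\in H^0(\pr 1,\sO(6))$ being a perfect square $g^2$; then $C$ and $C'$ are the graphs $x_4=\pm g$, each isomorphic to $c$ and hence a smooth rational curve of degree $1$, i.e. a line, so $p_a(C)=0$. Moreover $\{g=0\}$ exhibits $c\cap W=\{f_6|_c=0\}$ as three (possibly coinciding) points each of multiplicity $2$, i.e. $c$ is \emph{tritangent} to $W$.

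In the doubled-line case $\phi^{-1}(\ell)=C$ is irreducible and $\phi|_C\colon C\to\ell\cong\pr 1$ is a connected double cover branched over $(\ell\cap W)\cup\{k\}$. To count branch points cleanly I would pass to the resolution $\tilde\phi\colon\tilde Y\to\tilde K$, which is branched over $E\cup\sigma_K^{-1}(W)$: the strict transform $\tilde\ell$ meets $\sigma_K^{-1}(W)$ in $\ell\cdot W=3$ points and meets $E$ in one point, for a total of $4$ branch points. Riemann--Hurwitz then gives $p_a(C)=1$, with $C$ a smooth genus-$1$ curve when the branch data is reduced and a singular rational curve (still of arithmetic genus $1$) when branch points collide. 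Finally, since $c=2\ell$ and $\ell\cdot W=3$, the conic $c$ again meets $W$ in three (possibly coinciding) points with multiplicity $2$, which unifies the intersection statement across both cases.

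The hard part will be the bookkeeping at the vertex $k$: there $K$ is singular and $\phi$ is genuinely ramified, so the fourth branch point and hence the genus in the doubled-line case only become transparent after passing to the resolution $\tilde\phi$, where $E$ supplies it. A secondary point requiring care is the equivalence between splitting of the cover over a smooth conic and $f_6|_c$ being a square, together with the fact (from the classification of conics in $K$) that a smooth conic avoids $k$ so that no vertex contribution enters there. Once these structural points are settled, the degree computation and the Riemann--Hurwitz count are routine.
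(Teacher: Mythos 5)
Your overall route is the same as the paper's: split according to whether $\phi|_C$ is birational onto a smooth conic or $2$-to-$1$ onto a line of the ruling, deduce splitting and tritangency from the fact that $f_6|_c$ must be a (nonzero) square on $c\simeq \pr 1$, and get the genus from a branch-point count. Your birational case is correct and in fact slightly more explicit than the paper's (the paper argues via parity of the local intersection multiplicities of $c\cap W$ rather than via the projection formula); note only that, exactly like the paper, you implicitly assume $c\not\subset W$, which holds for general $Y$.

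The gap is in the doubled-line case, at exactly the point you yourself flagged as ``the hard part.'' Riemann--Hurwitz applied on the resolution $\widetilde\phi\colon \widetilde Y\to\widetilde K$ computes the genus of $\widetilde C\coloneqq \widetilde\phi^{-1}(\widetilde\ell)$, the strict transform of $C$ --- not of $C$ itself; your sentence ``Riemann--Hurwitz then gives $p_a(C)=1$'' silently identifies the two. Since $\sigma_Y|_{\widetilde C}\colon \widetilde C\to C$ is only known to be finite and birational, what you actually get is $p_a(C)\geq p_a(\widetilde C)=1$: a priori $C$ could be singular at $y_0$ (with $\widetilde C$ its partial normalization), giving $p_a(C)=2$ and violating the dichotomy the lemma asserts. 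So you still owe smoothness of $C$ at $y_0$, equivalently that $\sigma_Y|_{\widetilde C}$ is an isomorphism; the passage to the resolution does not by itself supply this, it merely relocates the question. The fix is short within your own setup: since $\widetilde\phi$ is branched along $E$, one has $\widetilde\phi^*E=2E_Y$ with $E_Y=\sigma_Y^{-1}(y_0)$ the exceptional plane of the blow-up at the smooth point $y_0$, so restricting to $\widetilde C$ gives $2\,(\widetilde C\cdot E_Y)=\deg(\widetilde\phi|_{\widetilde C})\cdot(\widetilde\ell\cdot E)=2$, hence $\widetilde C$ meets $E_Y$ transversally in a single point and $\mathrm{mult}_{y_0}C=\widetilde C\cdot E_Y=1$. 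Alternatively, argue as the paper implicitly does, directly on $Y$: near $y_0$ the cover $\phi$ is the quotient $\C^3\to\C^3/\pm 1$, so $\phi^{-1}(\ell)$ is smooth at $y_0$ and $y_0$ is a simple branch point of $C\to\ell$; then $C\to\ell$ is a flat double cover with branch divisor of degree $3+1=4$, which yields $p_a(C)=1$ and the smooth-elliptic versus singular-rational dichotomy in one stroke.
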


\begin{proof}
Since $\phi$ is induced by the linear series $\abs{2H}$, $c=\phi(C)$ must be a conic on $K$. Note first of all that if $c$ is reducible then so is $C$, but this is impossible since $C.H=1$. Hence, $c$ is either smooth or a doubled line. In either case, $c$ cannot be contained in $W$: otherwise $\phi_{|C}$ is an isomorphism since $\phi$ branches over $W$, but this contradicts the assumptions on degree. 

If $c$ is smooth and it intersects $W$ with odd multiplicity at a point, then $\phi^{-1}(c)$ must be irreducible of degree $>1$. This is not the case as $C\subseteq \phi^{-1}(c)$. 
So $c$ is tritangent to $W$, and $\phi^{-1}(c)=C\cup C'$ is the union of two lines. 

If $c=2l$ is a doubled line with $l$ a line in the ruling of $K$, then the restriction of $\phi\colon\phi^{-1}(l)\to l$ is a covering map branched over the four points $(l\cap W) \cup k$. Since $k\notin W$, $\phi^{-1}(l)$ must be irreducible. If the points in $(l\cap W)$ are all distinct, then $C=\phi^{-1}(l)$ is a smooth elliptic curve. If two points of $l \cap W$ coincide, then $C$ has a double point. If all three coincide, $C$ has a cusp.
\end{proof}

We can now classify Gieseker-semistable sheaves of class $\kappa_1$:

\begin{proposition}\label{prop_classification_subschemes}
Semistable sheaves of class $\kappa_1$ on $Y$ are exactly ideal sheaves of subschemes $Z$ with Hilbert polynomial $\chi(\sO_Z(t))=t+1$. There are three possibilities for $Z$:
\begin{enumerate}[label=({\roman*})]
    \item \label{itm:Z_line} $Z$ is a line in $Y$;
    \item \label{itm:Z_embedded_pt} $Z$ is a non-reduced scheme supported on a curve of degree 1 and genus 1 with an embedded point;
    \item \label{itm:Z=CuP} $Z$ is the union of a curve of degree 1 and genus 1 and a point which does not belong to the curve.
\end{enumerate}
\end{proposition}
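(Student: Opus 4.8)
The plan is to reduce any semistable sheaf of class $\kappa_1$ to an ideal sheaf, read off the Hilbert polynomial of the subscheme it cuts out, and then classify that subscheme using Lemma \ref{lem_curves_degree_1}.

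First I would show that a Gieseker-semistable sheaf $F$ of class $\kappa_1=1-\frac{H^2}{d}$ is a rank-one torsion-free sheaf. Indeed $\Ch_0(\kappa_1)=1$, and semistability forces $F$ to be pure (a nonzero torsion subsheaf has reduced Hilbert polynomial of strictly smaller degree, which dominates $p(F)$ in the ordering, contradicting semistability), so $F$ is torsion-free. Since $\Ch_1(\kappa_1)=0$ and $\Pic(Y)=\Z H$, the reflexive hull $F^{**}$ is a line bundle of trivial determinant, hence $F^{**}\simeq \sO_Y$. The inclusion $F\hookrightarrow F^{**}=\sO_Y$ then realizes $F=I_Z$ as the ideal sheaf of a subscheme $Z\subset Y$, and because $[I_Z]=\kappa_1=[I_\ell]$ we get
\[ \chi(\sO_Z(t))=\chi(\sO_Y(t))-\chi(I_Z(t))=\chi(\sO_Y(t))-\chi(I_\ell(t))=\chi(\sO_\ell(t))=t+1. \]
Conversely, any ideal sheaf $I_Z$ is rank-one torsion-free, hence automatically (Gieseker) semistable, and the datum $\chi(\sO_Z(t))=t+1$ together with $\Pic(Y)=\Z H$ (so that numerical classes on $Y$ are detected by Euler characteristics) pins down its class to be $\kappa_1$. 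This establishes the asserted identification of the two families of objects.

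For the classification, let $T\subset\sO_Z$ be the maximal zero-dimensional subsheaf and set $\sO_C\coloneqq \sO_Z/T$, so that $C$ is a pure one-dimensional subscheme with $\deg C=\deg Z=1$ and $\chi(\sO_Z)=\chi(\sO_C)+\ell$, where $\ell$ is the length of $T$. Since a reducible or generically non-reduced one-dimensional scheme has degree at least $2$, the curve $C$ is integral of degree $1$, and Lemma \ref{lem_curves_degree_1} gives $p_a(C)\in\{0,1\}$. If $p_a(C)=0$ then $C$ is a line, $\chi(\sO_C)=1$, so $\ell=0$ and $Z=C$: this is case \ref{itm:Z_line}. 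If $p_a(C)=1$ then $\chi(\sO_C)=0$, so $\ell=1$; the resulting length-one point either lies on $C$, giving the embedded point of case \ref{itm:Z_embedded_pt}, or is disjoint from $C$, giving case \ref{itm:Z=CuP}.

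The routine parts are the reduction to ideal sheaves and the automatic semistability of rank-one torsion-free sheaves. The step requiring the most care is the classification bookkeeping: verifying that the one-dimensional part $C$ is forced to be integral of degree $1$ so that Lemma \ref{lem_curves_degree_1} applies, and correctly accounting for the single extra length via $\chi(\sO_Z)=1$, distinguishing the embedded point from the disjoint one. No single step is a genuine obstacle; the real content is the translation of the numerical datum $\chi(\sO_Z(t))=t+1$ into the geometric trichotomy supplied by the degree-$1$ curve classification.
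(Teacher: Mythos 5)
Your proposal is correct and follows essentially the same route as the paper: identify semistable sheaves of class $\kappa_1$ with ideal sheaves via the double-dual (reflexive hull) argument, note that rank-one torsion-free sheaves are automatically stable for the converse, and derive the trichotomy from Lemma \ref{lem_curves_degree_1} together with the Hilbert polynomial. Your classification step is in fact a more explicit write-up (torsion filtration plus Euler-characteristic bookkeeping) of the paper's terser argument via $H.Z_{\mathrm{red}}=1$, and the only point the paper adds is the citation of Tihomirov for the fact that all three cases actually occur.
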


\begin{proof}
Ideal sheaves are torsion free of rank 1, and therefore stable. So, it suffices to show that a Gieseker-semistable sheaf $E$ of class $\kappa_1$ is an ideal sheaf. This is a standard argument: since $Y$ is smooth, $E \to E^{\vee\vee}$ is injective and $E^\vee$ is reflexive, so that $E^\vee\simeq \sO_Y(-D)$ for some divisor $D$. Therefore $E\otimes \sO_Y(-D)$ is the ideal sheaf of a subscheme supported in codimension 2. Then, $E\simeq I_Z \otimes \sO_Y(D)$, and since $[E]=\kappa_1$ we must have $D=0$ and $\chi(\sO_Z(t))=t+1$ (the Hilbert polynomial is that of $\sO_L$ for $L$ a smooth rational curve in $Y$). 

The three possibilities for $Z$ follow from the fact that $H.Z_{\mathrm{red}}=1$ is the degree of the Hilbert polynomial, so $Z_{\mathrm{red}}$ contains one of the curves described in Lemma \ref{lem_curves_degree_1}. Then, the only possible cases are those listed, note moreover that all three can occur \cite{Tihomirov1982}. 
\end{proof}

We will refer to the three possibilities listed in Proposition \ref{prop_classification_subschemes} as to subschemes of type \ref{itm:Z_line}, \ref{itm:Z_embedded_pt}, and \ref{itm:Z=CuP}. Observe moreover that Proposition \ref{prop_classification_subschemes} implies the following: 

\begin{proposition}
\label{cor_Hilb=M_G}
The moduli space $M_G(\kappa_1)$ is isomorphic to the Hilbert scheme of lines $\Hilb(Y,t+1)$. 
\end{proposition}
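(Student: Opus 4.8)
The statement is essentially a reformulation of Proposition \ref{prop_classification_subschemes} at the level of families, so the plan is to promote the set-theoretic correspondence $Z \leftrightarrow I_Z$ to a pair of mutually inverse morphisms. First I would record the two directions as bijections on closed points: by Proposition \ref{prop_classification_subschemes} every Gieseker-semistable sheaf of class $\kappa_1$ is the ideal sheaf $I_Z$ of a subscheme with $\chi(\sO_Z(t))=t+1$, and such ideal sheaves are torsion-free of rank one, hence stable; conversely each such $Z$ yields a stable sheaf $I_Z$ of class $\kappa_1$, the class being read off fiberwise from the sequence $0\to I_Z\to \sO_Y\to \sO_Z\to 0$.

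To build the morphism $\Hilb(Y,t+1)\to M_G(\kappa_1)$, I would use the universal subscheme $\mathcal Z\subset Y\times \Hilb(Y,t+1)$: its ideal sheaf is a flat family of stable sheaves of class $\kappa_1$, so by the (coarse) moduli property of $M_G(\kappa_1)$ it induces the desired morphism, which is bijective on closed points by the previous paragraph.

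The inverse direction rests on the observation that the inclusion $I_Z\hookrightarrow \sO_Y$ is canonical up to scalar. Indeed, $\sO_Z$ is supported in codimension two, so $\mathcal{H}om(\sO_Z,\sO_Y)=\mathcal{E}xt^1(\sO_Z,\sO_Y)=0$ on the smooth threefold $Y$; feeding this into the long exact sequence obtained by applying $\Hom(-,\sO_Y)$ to the defining sequence gives $\Hom(I_Z,\sO_Y)\simeq \Hom(\sO_Y,\sO_Y)=\C$. This vanishing is uniform across the three types of subschemes in Proposition \ref{prop_classification_subschemes}, since all of them have support of codimension two. Consequently, for a flat family $\mathcal E$ of stable sheaves of class $\kappa_1$ over a base $T$, cohomology and base change show that $p_{T*}\mathcal{H}om(\mathcal E,p_Y^*\sO_Y)$ is a line bundle and supplies a canonical evaluation map $\mathcal E\to p_Y^*\sO_Y$ (up to twist by a line bundle on $T$) whose fiberwise cokernel is $\sO_{Z_t}$. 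This cokernel is a flat family of quotients of $\sO_Y$ with Hilbert polynomial $t+1$, hence defines a $T$-point of $\Hilb(Y,t+1)$; applying this to a (local) universal family gives the inverse morphism $M_G(\kappa_1)\to \Hilb(Y,t+1)$.

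I expect the main obstacle to be precisely this last, family-theoretic step: verifying via cohomology and base change that the fiberwise one-dimensional Hom-spaces glue to a line bundle on $T$, and that the resulting evaluation map has $T$-flat cokernel of the correct Hilbert polynomial. Once this is in place, both constructions are manifestly functorial and mutually inverse, so the two moduli spaces are isomorphic. On closed points everything is already guaranteed by Proposition \ref{prop_classification_subschemes}, and the embedded-point and disjoint-point cases \ref{itm:Z_embedded_pt}--\ref{itm:Z=CuP} require no special treatment, since their defining sheaves still have codimension-two support.
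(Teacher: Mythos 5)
Your proof is correct and, like the paper's, works by identifying the two moduli functors; but the mechanism you use for the key family-theoretic step is genuinely different. The paper argues as in \cite[Theorem 2.7]{PT09_CurveCounting}: a flat family $\sI$ of semistable sheaves of class $\kappa_1$ over $B$, normalized to have trivial determinant along $B$, is pure of rank one and hence injects into its double dual; flatness forces $\sI^{\vee\vee}$ to be locally free, the determinant normalization gives $\sI^{\vee\vee}\simeq\sO_{Y\times B}$, and the cokernel of $\sI\hookrightarrow\sO_{Y\times B}$ is the desired flat family of quotients. You instead build the embedding from the fiberwise computation $\Hom(I_Z,\sO_Y)\simeq\C$ (correctly deduced from the codimension-two support of $\sO_Z$, uniformly over the three types of Proposition \ref{prop_classification_subschemes}) together with base change, so that the relative Hom sheaf is a line bundle on the base and the evaluation map is canonical up to a twist from $T$. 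Each route buys something: the paper's avoids all base-change theory for Hom sheaves, at the price of importing from Pandharipande--Thomas the fact that the double dual of a flat family of rank-one torsion-free sheaves is locally free; yours makes the canonicity of the inclusion $I_Z\subset\sO_Y$ completely explicit. One caution about the step you rightly flag as the main obstacle: the sheaf $\HHom(\sE,p_Y^*\sO_Y)$ is in general neither $T$-flat nor compatible with restriction to fibers, so cohomology and base change cannot be applied to it naively. The correct formulation is to apply base change to the perfect complex $\mathbf{R}p_{T*}\,\mathbf{R}\HHom(\sE,\sO_{Y\times T})$, whose derived fibers are $\mathbf{R}\Hom(\sE_t,\sO_Y)$: these have no cohomology in negative degrees and constant one-dimensional $H^0$, which forces $H^0$ of that complex to be a line bundle commuting with base change (alternatively, invoke Altman--Kleiman's relative Hom machinery). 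With that fix, fiberwise injectivity of the evaluation map plus $T$-flatness of $\sE$ gives $T$-flatness of the cokernel by the standard elementary lemma, and your two constructions are mutually inverse at the level of functors, exactly as in the paper.
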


\begin{proof}
We argue as in the proof of \cite[Theorem 2.7]{PT09_CurveCounting}. Let $\sI$ be a flat family of semistable sheaves of class $\kappa_1$ over a base $B$, normalized so that it has trivial determinant along $B$. The sheaf $\sI$ has rank 1, and it is pure so it injects into its double dual 
\[ 0\to \sI \to \sI^{\vee\vee}. \]
Flatness of $\sI$ implies that $\sI^{\vee\vee}$ is locally free, and $\sI^{\vee\vee}$ has trivial determinant since $\sI$ does. Therefore, $\sI^{\vee\vee}\simeq \sO_{Y\times B}$, and there is a short exact sequence 
\[0\to \sI \to \sO_{Y\times B} \to \sQ \to 0, \]
where $\sQ$ is a flat family of quotients of $\sO_{Y\times B}$. Conversely, any such family of quotients gives rise to a family of ideal sheaves as those listed in Prop. \ref{prop_classification_subschemes}. 

This identifies the functors represented by $M_G(\kappa_1)$ and $\Hilb(Y,t+1)$.
\end{proof}

\begin{remark}
As mentioned in the introduction, $3H$ is the smallest very ample multiple of $H$. The embedding $Y \to \mathbb{P}(H^0(\sO_Y(3H)))$ maps the Hilbert scheme $\Hilb(Y,t+1)$ to that of twisted cubics, which has two irreducible components whose intersection parametrizes non-reduced subschemes \cite[Sec. 3]{CK11}.
\end{remark}

We describe the Gieseker moduli space $M_G(\kappa_1)$. We prove the theorem here, even if in the proof we apply Proposition \ref{prop_ext1=5_non_reduced_smooth}, which is postponed to after some more technical computations:

\begin{thm}\label{thm_Gieseker}
The moduli space $M_G(\kappa_1)$ has two irreducible components $M_1$ and $M_2$. 

$M_1$ is a smooth surface compactifying the locus of ideals of smooth lines of $Y$. 
$M_2$ has dimension 5, and its general object is a subscheme of type \ref{itm:Z=CuP}. It is smooth outside the intersection with $M_1$. 

Points in $M_1\cap M_2\simeq C_0$ parametrize singular rational curves with an embedded point at the singularity. 
\end{thm}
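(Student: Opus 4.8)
The plan is to transport the problem to the Hilbert scheme via Proposition~\ref{cor_Hilb=M_G}, so that $M_G(\kappa_1)\simeq \Hilb(Y,t+1)$, and then to match the three strata of Proposition~\ref{prop_classification_subschemes} against the two components. I would \emph{define} $M_1$ as the closure of the locus of ideal sheaves of smooth lines (type~\ref{itm:Z_line}) and $M_2$ as the closure of the locus of subschemes of type~\ref{itm:Z=CuP}. Since every point of $M_G(\kappa_1)$ is of type \ref{itm:Z_line}, \ref{itm:Z_embedded_pt}, or \ref{itm:Z=CuP}, it suffices to show that every type~\ref{itm:Z_embedded_pt} subscheme lies in $M_1\cup M_2$; this then also shows these are the only two components.

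For $M_1$ I would identify it with the Fano scheme of lines of $Y$ and invoke Tihomirov~\cite{Tihomirov1982}: under the standing generality hypotheses, $M_1$ is a smooth irreducible surface whose open part parametrizes smooth lines and whose complement is the curve $C_0$, parametrizing the non-reduced singular lines $Z_x$ (a type~\ref{itm:Z_embedded_pt} subscheme whose support is a singular rational curve with embedded point at the node). The dimension is confirmed infinitesimally: the tangent space at $[I_L]$ is $H^0(N_{L/Y})$, and $\deg N_{L/Y}=(2\cdot0-2)-K_Y\cdot L=0$ gives $\chi(N_{L/Y})=2$; Tihomirov's smoothness forces $H^1(N_{L/Y})=0$, hence $h^0(N_{L/Y})=2=\dim M_1$.

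For $M_2$ I would realize it as an incidence image. By Lemma~\ref{lem_curves_degree_1} the degree-$1$, genus-$1$ curves are exactly the $C_x=\phi_{2H}^{-1}(l_x)$, parametrized by the ruling lines $l_x$ of $K$, i.e.\ by $\pr 2$ through $\eta$; adjoining a free point of $Y$ gives a generically injective rational map $\pr 2\times Y\dashrightarrow \Hilb(Y,t+1)$, $(x,p)\mapsto[C_x\sqcup p]$, whose image closure $M_2$ is therefore irreducible of dimension $2+3=5$. To see that $M_2$ is smooth away from $M_1$, I would compute the tangent space along its two non-special strata. For a disjoint union $Z=C\sqcup p$ with $C$ a smooth elliptic curve, deformation theory splits as $\Hom(I_Z,\sO_Z)\simeq H^0(N_{C/Y})\oplus T_pY$; since $\deg N_{C/Y}=(2\cdot1-2)-K_Y\cdot C=2$ gives $\chi(N_{C/Y})=2$ and (by generality) $H^1(N_{C/Y})=0$, this has dimension $2+3=5=\dim M_2$, a smooth point. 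For a type~\ref{itm:Z_embedded_pt} subscheme supported on a smooth elliptic curve with an embedded point, the statement that the tangent space again has dimension $5$ (so that $M_2$ is smooth there) is precisely the content of Proposition~\ref{prop_ext1=5_non_reduced_smooth}.

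Finally, for the intersection I would show $M_1\cap M_2=\{Z_x\st x\in C_0\}\simeq C_0$. The inclusion $C_0\subseteq M_1$ is Tihomirov's boundary description, and $C_0\subseteq M_2$ follows by degeneration: as $x'\to x\in C_0$ the elliptic curves $C_{x'}$ specialize to the singular rational curve $C_x$, and sliding the free point onto the node exhibits $Z_x$ as a flat limit of type~\ref{itm:Z=CuP} subschemes. Conversely, every member of $M_2$ has one-dimensional part a degree-$1$ curve of arithmetic genus $1$ by Lemma~\ref{lem_curves_degree_1}, whereas a smooth line has arithmetic genus $0$; hence no smooth line lies in $M_2$, and the type~\ref{itm:Z_embedded_pt} subschemes on smooth curves lie in $M_2\setminus M_1$. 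I expect the main obstacle to be the local analysis at the non-reduced points $Z_x$, which is exactly why Proposition~\ref{prop_ext1=5_non_reduced_smooth} is isolated: there the tangent space to $\Hilb(Y,t+1)$ jumps above $5$ because the two components cross, and one must control the embedded-point deformations to confirm that $M_1$ and $M_2$ are smooth along $C_0$ on their respective sides and meet only there, with no extra component and neither contained in the other.
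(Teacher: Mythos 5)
Your proposal is correct and follows essentially the same route as the paper's proof: reduction to $\Hilb(Y,t+1)$ via Proposition \ref{cor_Hilb=M_G}, exhaustion via Proposition \ref{prop_classification_subschemes}, Tihomirov's theorem for $M_1$, the split tangent-space computation $\Hom(I_Z,\sO_Z)\simeq\Hom(I_C,\sO_C)\oplus\Hom(I_p,\sO_p)$ at type \ref{itm:Z=CuP} points, and Proposition \ref{prop_ext1=5_non_reduced_smooth} at type \ref{itm:Z_embedded_pt} points with smooth support. The extra details you supply (the incidence construction $\pr 2\times Y\dashrightarrow \Hilb(Y,t+1)$ for irreducibility of $M_2$, and the degeneration plus genus arguments pinning down $M_1\cap M_2=C_0$) are consistent refinements of steps the paper delegates to the parameter count and to Tihomirov's description of how $M_1$ meets the rest of the moduli space.
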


\begin{proof}
The component $M_1$ parametrizing ideal sheaves of lines is described in \cite[Theorem 4]{Tihomirov1982}: $M_1$ is a smooth surface intersecting the rest of $M_G(v)$ on the locus parametrizing singular curves with a nilpotent embedded at the singularity. This locus is isomorphic to the curve $C_0$.  

There is a 5 dimensional family of schemes of type \ref{itm:Z=CuP} (two parameters determine the one dimensional component, and three determine the point). Denote by $M_2$ the component of $M_G(v)$ containing this family. By Prop. \ref{cor_Hilb=M_G}, the tangent space at $Z=C\cup p$ of type \ref{itm:Z=CuP} is 
\[ T_Z M_2\simeq \Hom(I_{Z},\sO_{Z})\simeq \Hom(I_C,\sO_C)\oplus \Hom(I_p,\sO_p) \]
The spaces in the right hand side parametrize deformations of $C$ and $p$ respectively, so $\dim T_Z M_2=5$. This shows that $\dim M_2=5$ and that $M_2$ is smooth at type \ref{itm:Z=CuP} points. Moreover, Proposition \ref{prop_ext1=5_non_reduced_smooth} shows that $M_2$ is smooth at points of type \ref{itm:Z_embedded_pt} for which the nilpotent is supported on smooth points. 

Finally, there are no other components in $M_G(v)$, because we exhausted the possibilities in Proposition \ref{prop_classification_subschemes}.
\end{proof}

\begin{remark}\label{rmk_def_of_F(Y)}
The component $M_1$ is sometimes denoted $F(Y)$ and called the \textit{Fano surface of lines} of $Y$ (e.g. in \cite{Tihomirov1982}).
\end{remark}

\begin{lemma}
\label{lem_exti(IC,OC)}
Let $C$ be a curve in $Y$ of degree 1 and arithmetic genus $p_a=1$. Then 
$$\begin{cases}\Ext^0(I_C,I_C)=\C   \\
  \Ext^1(I_C,I_C)=\C^2 \\ 
  \Ext^i(I_C,I_C)=0 \mbox{ otherwise.}
\end{cases}$$

\end{lemma}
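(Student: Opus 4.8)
The plan is to bypass the two auxiliary sequences $\Ext^\bullet(I_C,\sO_Y)$ and $\Ext^\bullet(I_C,\sO_C)$ and instead compute the sheaf-level $\EEXT{\bullet}{I_C}{I_C}$ directly, then run the local-to-global spectral sequence. By Lemma \ref{lem_curves_degree_1} the curve $C$ is integral and either smooth or rational with a single node or cusp, so its only singularities are planar; hence $C$ is a local complete intersection of codimension $2$ in the smooth threefold $Y$, its conormal sheaf is locally free, $N\coloneqq N_{C/Y}$ is a rank-$2$ bundle on $C$, and $\EEXT{q}{\sO_C}{\sO_C}\simeq \wedge^q N$. Taking $\EEXT{\bullet}{I_C}{-}$ of the structure sequence $0\to I_C\to \sO_Y\to \sO_C\to 0$, one gets $\EEXT{0}{I_C}{I_C}\simeq\sO_Y$ — because $\shom{I_C}{\sO_Y}\simeq\sO_Y$ for a codimension-$2$ ideal, and the induced map $\shom{I_C}{\sO_Y}\to\shom{I_C}{\sO_C}$ vanishes since it is precomposition with $I_C\hookrightarrow\sO_Y$ — while for $q\ge 1$ one finds $\EEXT{1}{I_C}{I_C}\simeq N$ and $\EEXT{q}{I_C}{I_C}=0$ for $q\ge 2$. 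The one local point to check is that the map $\det N\simeq\EEXT{2}{\sO_C}{\sO_Y}\to\EEXT{2}{\sO_C}{\sO_C}\simeq \det N$ induced by $\sO_Y\to\sO_C$ is an isomorphism (a short Koszul computation), which removes the a priori $\det N$ contributions to $\EEXT{1}{I_C}{I_C}$ and $\EEXT{2}{I_C}{I_C}$.

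Feeding $\EEXT{0}{I_C}{I_C}\simeq\sO_Y$, $\EEXT{1}{I_C}{I_C}\simeq N$, $\EEXT{q}{I_C}{I_C}=0$ for $q\ge 2$ into $E_2^{p,q}=H^p(\EEXT{q}{I_C}{I_C})\Rightarrow \Ext^{p+q}(I_C,I_C)$, and using that $Y$ is Fano so $H^{>0}(\sO_Y)=0$, all differentials vanish and the abutment reads off as
\[ \Ext^0(I_C,I_C)=\C,\quad \Ext^1(I_C,I_C)=H^0(N),\quad \Ext^2(I_C,I_C)=H^1(N),\quad \Ext^3(I_C,I_C)=0. \]
The outer two also match the direct check that $I_C$ is stable, hence simple, and that $\Hom(I_C,I_C(-2H))=0$ by comparing the slopes $0$ and $-2$ (the latter is $\Ext^3(I_C,I_C)^\vee$ by Serre duality, since $\oo{Y}\simeq\sO_Y(-2H)$). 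Thus the entire statement is equivalent to the two cohomological assertions $h^0(N)=2$ and $h^1(N)=0$ for the normal bundle of $C$.

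By adjunction $\det N\simeq \oo{C}\otimes \sO_C(2H)\simeq \sO_C(2H)$, using $\oo{C}\simeq \sO_C$ for an integral arithmetic-genus-$1$ curve, so $\deg N=2$ and $\chi(N)=\deg N+\rk N\,(1-p_a)=2$; hence $h^0(N)-h^1(N)=2$, and it suffices to prove the single vanishing $H^1(N)=0$. By Serre duality on $C$ this is $H^0(N^\vee)=0$, i.e. the conormal bundle (of degree $-2$) has no global sections, equivalently $N$ has no sub-line-bundle of degree $\ge 2$. This is the only non-formal input, and I expect it to be the main obstacle: I would deduce it from the explicit realization of $C$ as the genus-$1$ double cover of a ruling line of the cone $K$ (Lemma \ref{lem_curves_degree_1}), whose $2$-dimensional family of deformations is accounted for precisely by $H^0(N)$, invoking the generality hypothesis on $C_0$ and the analysis of \cite{Tihomirov1982}. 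Granting $H^1(N)=0$ forces $h^0(N)=2$ and completes the proof; as a byproduct, the same normal-bundle data recover $\Ext^\bullet(I_C,\sO_C)=(\C^2,\C^2,0,0)$ and $\Ext^\bullet(I_C,\sO_Y)=(\C,\C^2,0,0)$.
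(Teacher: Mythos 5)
Your formal skeleton is correct and is a genuinely different route from the paper's: the paper resolves $I_C$ globally by the Koszul complex $0\to\sO_Y(-2)\to\sO_Y(-1)^{\oplus 2}\to I_C\to 0$ (the curve $C$ is cut out by two pull-backs of linear forms from $\pr 2$) and reads off $\Ext^\bullet(I_C,I_C)$ directly as the kernel and cokernel of an explicit rank-$3$ map $H^0(I_C(1))^{\oplus 2}\to H^0(I_C(2))$, whereas you compute the local Ext sheaves and run the local-to-global spectral sequence. Your local computation ($\EEXT{0}{I_C}{I_C}\simeq\sO_Y$, $\EEXT{1}{I_C}{I_C}\simeq N$, vanishing in higher degrees, hence the reduction of the lemma to $h^0(N)=2$ and $h^1(N)=0$) is correct, as is the use of $\omega_C\simeq\sO_C$ and Riemann--Roch to get $\chi(N)=2$.

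However, there is a genuine gap exactly at the point you flag: the vanishing $H^1(N)=0$ is not proven, and the route you propose cannot prove it. The existence of the $2$-dimensional family $\{C_x\}_{x\in\pr 2}$ only gives $h^0(N)\geq 2$, a lower bound on the tangent space to the Hilbert scheme at $[C]$; to conclude $h^1(N)=0$ (equivalently $h^0(N)=2$, since $\chi(N)=2$) you would need smoothness of the Hilbert scheme at $[C]$ in dimension $2$, which is essentially the content of the lemma --- the argument is circular. Moreover, the results of Tihomirov invoked in the paper concern the Fano surface of \emph{lines}, not the genus-$1$ curves $C_x$, and the generality of $C_0$ plays no role here; nor can degree considerations alone help, since a rank-$2$ bundle of degree $-2$ on $C$ (e.g. $\sO_C\oplus L$ with $\deg L=-2$) may well have sections. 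The missing input is precisely the global fact on which the paper's proof is built: $C$ is a \emph{global} complete intersection of two members of $\abs{H}$, so that $I_C/I_C^2\simeq\sO_C(-1)^{\oplus 2}$ and $N\simeq\sO_C(1)^{\oplus 2}$; then $h^1(N)=2\,h^1(\sO_C(1))=2\,h^0(\sO_C(-1))=0$ by Serre duality ($\omega_C\simeq\sO_C$) and the fact that a line bundle of negative degree on an integral curve has no sections. With that one line your argument closes up, but it then becomes a repackaging of the same Koszul-resolution input the paper uses.
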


\begin{proof}
The curve $C$ is cut out by the pull-back of two linear forms from $\pr 2$ via $\eta\colon K^\circ \to \pr 2$, denote them $l,m$. In fact, the Koszul complex in $l$ and $m$ is exact on $Y$:
\begin{equation}
    \label{eq_Koszul_resolution_Zred}
    0\to \sO_Y(-2) \xrightarrow{\begin{pmatrix}m\\-l\end{pmatrix}} \sO_Y(-1)^{\oplus 2} \to I_C \to 0.
\end{equation}
Applying the functor $\Hom(-,I_C)$ gives the map:
\begin{equation}
    \label{eq_Koszul_Hom}
\end{equation}
$$H^0(I_C(1))^{\oplus 2}\simeq \Hom(\sO_Y(-1),I_C)^{\oplus 2} \xrightarrow{\cdot \begin{pmatrix} m & -l \end{pmatrix}} \Hom(\sO_Y(-2),I_C)\simeq H^0(I_C(2))$$. 

It is straightforward to check that the map \eqref{eq_Koszul_Hom} has rank 3, and the conclusion follows. 
\end{proof}

\begin{lemma}
\label{lem_exti(IZ,Op)}
Let $Z$ be a subscheme of type \ref{itm:Z_embedded_pt} with the embedded point $p$ in the smooth locus of $Z_{\mathrm{red}}$. Then
$$\Ext^i(I_Z,\sO_p)= \begin{cases} \C^3 & \mbox{ if }i=0,1 \\
\C & \mbox{ if }i=2\\
0 & \mbox{ otherwise.}
\end{cases}$$
Moreover, applying $\Hom(-,\sO_p)$ to the sequence
\begin{equation}
\label{eq_ses_IZ_IZred_Op}
   I_Z\to I_{Z_{\mathrm{red}}} \to \sO_p, 
\end{equation}
we get a non-zero homomorphism $\alpha\colon \Ext^1(\sO_p,\sO_p) \to \Ext^1(I_{Z_{\mathrm{red}}},\sO_p)$.
\end{lemma}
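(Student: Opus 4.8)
\emph{Plan.} The whole statement is local at $p$: since $\sO_p$ is a skyscraper, every sheaf $\EEXT{i}{F}{\sO_p}$ is supported at $p$, so the local-to-global spectral sequence degenerates and $\Ext^i(F,\sO_p)=\EEXT{i}{F}{\sO_p}_p=\Ext^i_{R}(F_p,\C)$, where $R\coloneqq\sO_{Y,p}$ is a regular local ring of dimension $3$. Because $p$ lies in the smooth locus of $C\coloneqq Z_{\mathrm{red}}$, I would choose local coordinates $x,y,z$ with $C=V(x,y)$ and $p=V(x,y,z)$, so that $(I_C)_p=(x,y)$ and $\mathfrak m=(x,y,z)$. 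The two inputs are then routine Koszul computations: the Koszul resolution of $\sO_p=R/\mathfrak m$ has all differentials in $\mathfrak m$, so $\Ext^i(\sO_p,\sO_p)=\textstyle\bigwedge^i(\mathfrak m/\mathfrak m^2)^\vee$ has dimensions $1,3,3,1$ for $i=0,1,2,3$; and $I_C=(x,y)$ has the two-term resolution
\[ 0\to R\xrightarrow{(y,\,-x)^{t}}R^{\oplus 2}\xrightarrow{(x,\,y)}I_C\to 0, \]
whose differentials again lie in $\mathfrak m$, giving $\Ext^i(I_C,\sO_p)$ of dimensions $2,1,0,0$.

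Next I would observe that the sequence \eqref{eq_ses_IZ_IZred_Op} is the short exact sequence $0\to I_Z\to I_C\xrightarrow{q}\sO_p\to 0$ (the quotient $I_C/I_Z$ is the length-one embedded point). Applying $\Hom(-,\sO_p)$, the map $\alpha\colon\Ext^1(\sO_p,\sO_p)\to\Ext^1(I_C,\sO_p)$ of the statement is precisely the pullback $q^{*}$ appearing in the long exact sequence. Feeding in the dimensions above, the relevant portion reads
\[ 0\to\C\to\C^2\to\Ext^0(I_Z,\sO_p)\to\C^3\xrightarrow{\ \alpha\ }\C\to\Ext^1(I_Z,\sO_p)\to\C^3\to 0, \]
together with the tail $0\to\Ext^2(I_Z,\sO_p)\to\C\to 0$ (and vanishing in higher degrees). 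Thus $\Ext^2(I_Z,\sO_p)=\C$ is automatic, coming from the top Koszul term $\Ext^3(\sO_p,\sO_p)$, while the claimed values $\Ext^0(I_Z,\sO_p)=\Ext^1(I_Z,\sO_p)=\C^3$ hold \emph{if and only if} $\alpha$ has rank one, i.e.\ is nonzero. In other words the two assertions of the lemma are equivalent, and everything comes down to showing $\alpha\neq 0$.

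The hard (but short) part is therefore the chain-level computation of $\alpha=q^{*}$. The map $q\colon(x,y)\to\C$ is given by a nonzero functional $(a,b)$ on the fibre $(I_C)_p\otimes\C$, recording the normal direction of the embedded point. Lifting $q$ to a morphism between the resolution of $I_C$ above and the Koszul resolution of $\sO_p$, the degree-one component is forced to be the vector $(-b,a,0)\in R^{\oplus 3}$, since it must map to $ay-bx$ under the Koszul differential $(x,y,z)$. Hence $\alpha$ sends a cotangent functional $(\psi_1,\psi_2,\psi_3)\in\Ext^1(\sO_p,\sO_p)$ to $a\psi_2-b\psi_1$, which is nonzero because $(a,b)\neq 0$. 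This proves $\alpha\neq0$, and substituting $\operatorname{rank}\alpha=1$ back into the long exact sequence yields $\Ext^i(I_Z,\sO_p)=\C^3,\C^3,\C$ for $i=0,1,2$ and $0$ otherwise.

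As an independent cross-check (and an alternative to the chain-level lift), one can compute $\Ext^0(I_Z,\sO_p)=(I_Z/\mathfrak m I_Z)^\vee$ directly: at $p$ the ideal $I_Z$ fails to be a complete intersection and needs three minimal generators (e.g.\ $y,x^2,xz$ in suitable coordinates), so $\Ext^0(I_Z,\sO_p)=\C^3$, which again forces $\alpha\neq0$. Finally $\chi(I_Z,\sO_p)=\chi(I_C,\sO_p)-\chi(\sO_p,\sO_p)=1-0=1=3-3+1$ confirms the Euler-characteristic bookkeeping. The only genuinely non-formal step is identifying $\alpha$ with $q^{*}$ and evaluating it on resolutions; the rest is additivity in the long exact sequence.
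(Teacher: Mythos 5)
Your proof is correct, and it shares its skeleton with the paper's: both localize the problem at $p$, compute $\mathrm{hom}^\bullet(\sO_p,\sO_p)=(1,3,3,1)$ and $\mathrm{hom}^\bullet(I_{Z_{\mathrm{red}}},\sO_p)=(2,1,0,0)$ from Koszul resolutions whose differentials vanish against $\sO_p$, and feed these into the long exact sequence of \eqref{eq_ses_IZ_IZred_Op}, which immediately gives $\mathrm{ext}^2(I_Z,\sO_p)=1$ and $\mathrm{ext}^3(I_Z,\sO_p)=0$. The difference lies in how the remaining unknowns are pinned down. The paper fixes the dimensions first: it gets $\mathrm{hom}(I_Z,\sO_p)=3$ by resolving $I_Z$ locally on its three minimal generators $l,m^2,ms$, and then $\mathrm{ext}^1(I_Z,\sO_p)=3$ from the numerical identity $\chi(I_Z,\sO_p)=\chi(\sO_Y,\sO_q)=1$; the non-vanishing of $\alpha$ is then a byproduct of the dimension count. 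You run the logic in the opposite direction: you identify $\alpha$ with the pullback $q^*$ and prove $\alpha\neq 0$ directly by lifting $q\colon I_{Z_{\mathrm{red}}}\to\sO_p$ to a chain map of resolutions, computing $\psi=(\psi_1,\psi_2,\psi_3)\mapsto a\psi_2-b\psi_1$, after which all dimensions follow from exactness. Your route buys an explicit formula for $\alpha$ (its non-vanishing becomes visible rather than emerging from bookkeeping) and dispenses with the Euler-characteristic input; the paper's route avoids any chain-level lifting. Note that your ``cross-check'' (three minimal generators plus Nakayama, together with $\chi=1$) is precisely the paper's argument, so your write-up in effect contains both proofs.

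One small point of rigor for a final version: the lift $(-b,a,0)$ is not literally forced, only forced modulo $\Ker\big((x,y,z)\colon R^{\oplus 3}\to R\big)=\mathrm{im}\,d_1^{\mathrm{Koszul}}$, whose entries lie in $\mathfrak{m}$; since any functional $\psi\in\Hom_R(R^{\oplus 3},\C)$ kills vectors with entries in $\mathfrak{m}$, the ambiguity does not affect $q^*\psi$, but this deserves a sentence.
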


\begin{proof}
The groups $\Hom^\ast(\sO_p,\sO_p)$ are the exterior algebra on the tangent space at $p$, so they have dimensions 1,3,3,1 for $\ast=0,1,2,3$. 
Applying the functor $\Hom(-,\sO_p)$ to the resolution \eqref{eq_Koszul_resolution_Zred} as in Lemma \ref{lem_exti(IC,OC)}, we see that $\mathrm{hom}^*(I_{Z_{\mathrm{red}}},\sO_p)=2,1,0,0$ for $*=,0,1,2,3$.

Apply $\Hom(-,\sO_p)$ to the sequence \eqref{eq_ses_IZ_IZred_Op} and consider the corresponding long exact sequence: this shows immediately that 
\[ \mathrm{ext}^2(I_Z,\sO_p)=1  \qquad  \mathrm{ext}^3(I_Z,\sO_p)=0. \]

On the other hand, we may consider a set of local coordinates around $p$ given as $\{l,m,s\}$, where $l,m$ define $Z_{\mathrm{red}}$. Then, $l,m^2$, and $m s$ generate $I_Z$ locally around $p$. Resolving $I_Z$ using these generators we see that $\mathrm{hom}(I_Z,\sO_p)=3$, arguing as above.

Finally, observe that $\chi(I_Z,\sO_p)=\chi(I_Z,\sO_q)=\chi(\sO_Y,\sO_q)=1$ where $q\in Y\setminus Z_{\mathrm{red}}$ (since this quantity only depends on the numerical class of $\sO_p$), which implies that $\mathrm{ext}^1(I_Z,\sO_p)=3$.

The map $\alpha$ appears in the long exact sequence, and a simple dimension count shows that it does not vanish.
\end{proof}

\begin{proposition}\label{prop_ext1=5_non_reduced_smooth}
If $Z$ is a subscheme of type \ref{itm:Z_embedded_pt} with the embedded point in the smooth locus of $Z_{\mathrm{red}}$, then
\[ \mathrm{ext}^1(I_Z,I_Z)=5. \]
\end{proposition}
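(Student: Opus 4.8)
The plan is to reduce everything to numerical data plus two applications of the long exact sequence attached to \eqref{eq_ses_IZ_IZred_Op}, which with $C \coloneqq Z_{\mathrm{red}}$ reads $0 \to I_Z \to I_C \xrightarrow{q} \sO_p \to 0$.

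First I would record the easy invariants. Since $[I_Z] = \kappa_1$ the Euler pairing is numerical, and reading off the Euler form on $N(\Ku(Y))$ gives $\chi(I_Z, I_Z) = \chi(\kappa_1, \kappa_1) = -1$. Being torsion-free of rank one, $I_Z$ is stable, hence simple, so $\mathrm{hom}(I_Z, I_Z) = 1$. Finally $\Ext^3(I_Z, I_Z) \simeq \Hom(I_Z, I_Z(-2))^\vee$ by Serre duality ($\omega_Y \simeq \sO_Y(-2)$), and any such morphism factors through $\Hom(I_Z, \sO_Y(-2)) = H^0(\sO_Y(-2)) = 0$, so $\mathrm{ext}^3(I_Z, I_Z) = 0$. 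Hence $\mathrm{ext}^1(I_Z, I_Z) = \mathrm{ext}^2(I_Z, I_Z) + 2$, and it suffices to compute $\mathrm{ext}^1$.

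Next I would compute $\Ext^\bullet(I_Z, I_C)$ by applying $\Hom(-, I_C)$ to \eqref{eq_ses_IZ_IZred_Op}. The inputs are $\Ext^\bullet(I_C, I_C) = (\C, \C^2, 0, 0)$ from Lemma \ref{lem_exti(IC,OC)}, and $\Ext^\bullet(\sO_p, I_C)$, which by Serre duality is $\Ext^{3-\bullet}(I_C, \sO_p)^\vee = (0, 0, \C, \C^2)$, using the values $\mathrm{hom}^\bullet(I_C, \sO_p) = (2,1,0,0)$ obtained inside the proof of Lemma \ref{lem_exti(IZ,Op)}. The long exact sequence then yields $\Ext^\bullet(I_Z, I_C) = (\C, \C^3, \C^2, 0)$. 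I would then apply $\Hom(I_Z, -)$ to \eqref{eq_ses_IZ_IZred_Op}, feeding in this computation together with $\Ext^\bullet(I_Z, \sO_p) = (\C^3, \C^3, \C, 0)$ from Lemma \ref{lem_exti(IZ,Op)}. Using $\mathrm{hom}(I_Z,I_Z) = 1$ and $\mathrm{ext}^3(I_Z, I_Z) = 0$, a direct chase collapses the sequence to the single relation
\[ \mathrm{ext}^1(I_Z, I_Z) = 6 - \rk(d), \qquad d\colon \Ext^1(I_Z, I_C) \to \Ext^1(I_Z, \sO_p) \text{ induced by } q. \]

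The whole statement thus comes down to proving $\rk(d) = 1$, which is the one piece not forced by the dimension counts. One inequality is formal: naturality of composition gives a commuting square $d \circ \iota^\ast = \iota^\ast \circ q_\ast$, where $\iota^\ast$ denotes restriction along $I_Z \hookrightarrow I_C$. The restriction $\iota^\ast\colon \Ext^1(I_C, \sO_p) \to \Ext^1(I_Z, \sO_p)$ vanishes, because $\alpha\colon \Ext^1(\sO_p,\sO_p) \to \Ext^1(I_C,\sO_p)$ of Lemma \ref{lem_exti(IZ,Op)} is surjective onto the one-dimensional $\Ext^1(I_C,\sO_p)$ and hence kills the next map by exactness of the $\Hom(-,\sO_p)$ sequence; therefore $d$ annihilates the image of $\Ext^1(I_C, I_C) \hookrightarrow \Ext^1(I_Z, I_C)$, a $\C^2$, forcing $\rk(d) \le 1$, i.e. $\mathrm{ext}^1(I_Z, I_Z) \ge 5$. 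The reverse inequality $d \ne 0$ is the main obstacle: it is exactly the assertion that $M_2$ is smooth rather than singular at $Z$, and it is invisible to the numerics. I expect to settle it by the explicit local resolution of $I_Z$ at $p$ with generators $l, m^2, ms$ (the local model already used in Lemma \ref{lem_exti(IZ,Op)}), checking that the residual class of $\Ext^1(I_Z, I_C)$ coming from $\Ext^2(\sO_p, I_C)$ survives post-composition with $q$. Substituting $\rk(d) = 1$ gives $\mathrm{ext}^1(I_Z, I_Z) = 5$ (and $\mathrm{ext}^2 = 3$). As a consistency check, under the isomorphism $M_G(\kappa_1) \simeq \Hilb(Y, t+1)$ of Proposition \ref{cor_Hilb=M_G} this tangent space is identified with $\Hom(I_Z, \sO_Z)$, matching the value $5$ computed for type \ref{itm:Z=CuP} schemes in the proof of Theorem \ref{thm_Gieseker}.
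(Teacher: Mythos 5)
Your bookkeeping is correct up to the last step: the computation $\Ext^\bullet(I_Z,I_C)=(\C,\C^3,\C^2,0)$, the relation $\mathrm{ext}^1(I_Z,I_Z)=6-\rk(d)$, and the bound $\rk(d)\leq 1$ (via the surjectivity of $\alpha$ from Lemma \ref{lem_exti(IZ,Op)}, the vanishing of $\Ext^1(I_C,\sO_p)\to\Ext^1(I_Z,\sO_p)$, and the naturality square) all check out against the paper's lemmas. But the proof is genuinely incomplete at the point you yourself flag: you never prove $d\neq 0$. You only announce that you ``expect to settle it'' by the local resolution of $I_Z$ with generators $l,m^2,ms$, and that computation is not performed. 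Since $d\neq 0$ is equivalent to $\mathrm{ext}^1(I_Z,I_Z)\leq 5$, what your write-up actually establishes is only the inequality $\mathrm{ext}^1(I_Z,I_Z)\geq 5$; the missing half is the substantive one (it is exactly the smoothness assertion for $M_2$ at $Z$), so this is a real gap, not a verification that can be waved off.

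The gap can be closed with ingredients you already have, with no local computation. Consider the exact triangle $I_Z\to I_C \xrightarrow{q} \sO_p \xrightarrow{\delta} I_Z[1]$ underlying \eqref{eq_ses_IZ_IZred_Op}. Bifunctoriality of composition gives $\delta^*\circ d = q_*\circ\delta^*$ as maps $\Ext^1(I_Z,I_C)\to \Ext^2(\sO_p,\sO_p)$, where $\delta^*$ denotes precomposition with $\delta$ and $q_*$ postcomposition with $q$. The map $\delta^*\colon \Ext^1(I_Z,I_C)\to\Ext^2(\sO_p,I_C)$ is surjective by your own long exact sequence for $\Hom(-,I_C)$ (its continuation lands in $\Ext^2(I_C,I_C)=0$), and $q_*\colon\Ext^2(\sO_p,I_C)\to\Ext^2(\sO_p,\sO_p)$ is injective: it is Serre dual to $\alpha$ (using $\sO_p\otimes\omega_Y\simeq\sO_p$), $\alpha$ is surjective by Lemma \ref{lem_exti(IZ,Op)}, and the source $\Ext^2(\sO_p,I_C)$ is one-dimensional. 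Hence $q_*\circ\delta^*\neq 0$, so $d\neq 0$ and $\rk(d)=1$. This non-vanishing is in fact the same one the paper's proof relies on: there $R\Hom(I_Z,I_Z)$ is computed via the spectral sequence of the two-term complex $[I_{Z_{\mathrm{red}}}\to\sO_p]$, and the map that cuts down the $q=2$ row of the first page is precisely $\Ext^2(\sO_p,I_{Z_{\mathrm{red}}})\to\Ext^2(\sO_p,\sO_p)$, identified there as the Serre dual of $\alpha$. So your long-exact-sequence route is viable and organizationally different from the paper's spectral sequence, but both arguments hinge on this one map, which your proposal leaves unproven.
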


\begin{proof}
We may write $I_Z\simeq [I_{Z_{\mathrm{red}}} \to \sO_p]$ where $p$ is the embedded point. Then, $R\Hom( I_Z, I_Z)$ may be computed with the spectral sequence 
\begin{equation}\label{eq_spec_seq}
E_1^{p,q}=H^q(K^{\bullet,p})\Rightarrow H^{p+q}(K^\bullet).
\end{equation}
The first page is

\begin{center}
\begin{tabular}{c|cc}
$\vdots$ & $\vdots$ & $\vdots$\\
 $\Ext^1(\sO_p,I_{Z_{\mathrm{red}}})$ & $\Ext^1(I_{Z_{\mathrm{red}}},I_{Z_{\mathrm{red}}})\oplus \Ext^1(\sO_p,\sO_p)$ &$\Ext^1(I_{Z_{\mathrm{red}}},\sO_p)$\\
$\Hom(\sO_p,I_{Z_{\mathrm{red}}})$ & $\Hom(I_{Z_{\mathrm{red}}},I_{Z_{\mathrm{red}}})\oplus \Hom(\sO_p,\sO_p)$ &$\Hom(I_{Z_{\mathrm{red}}},\sO_p)$\\
\hline
($p=-1$)&($p=0$)&($p=1$)
\end{tabular} 
\end{center}
with arrows pointing to the right and zeros in all other columns. We claim that the dimensions of the vector spaces above are given by
\begin{center}
\begin{tabular}{c|cc}
2 & 1 &0\\
1 & 3 &0\\
0 & $2+3$ &1\\
0 & $1+1$ &2\\
\hline
\end{tabular}
\end{center}
Indeed, the third column (and hence, by Serre duality, the first one) is computed in the proof of Lemma \ref{lem_exti(IZ,Op)}. 

The contributions from $\Hom^\bullet(I_{Z_{\mathrm{red}}},I_{Z_{\mathrm{red}}})$ in the central column follow from Lemma \ref{lem_exti(IC,OC)}, while the dimensions of $\Hom^\bullet(\sO_p,\sO_p)$ follow because $p$ is a smooth point of $Y$, as in the proof of Lemma \ref{lem_exti(IZ,Op)}.

Our next claim is that the maps in the middle rows are non-zero, and that the map in the bottom row has one-dimensional image. Granting the claim, the second page of the spectral sequence reads
\begin{center}
\begin{tabular}{c|cc}
$\ast$ & $\ast$ &0\\
0 & 2 &0\\
0 & 4 & 0\\
0 & 1 & 1\\
\hline
\end{tabular}
\end{center}
and hence $\mathrm{ext}^1(I_Z,I_Z)=5$.

The map on the second row from the top is $\Ext^2(\sO_p,I_{Z_{\mathrm{red}}}) \to \Ext^2(\sO_p,\sO_p)$. It is Serre dual to the homomorphism $\alpha$ (see Lemma \ref{lem_exti(IZ,Op)}), which is also the restriction to the second summand of the map on the third row:
\[\Ext^1(I_{Z_{\mathrm{red}}},I_{Z_{\mathrm{red}}})\oplus \Ext^1(\sO_p,\sO_p) \to \Ext^1(I_{Z_{\mathrm{red}}},\sO_p). \]
It follows from Lemma \ref{lem_exti(IZ,Op)} that these two maps do not vanish. 
Finally, observe that the map 
\[ \Hom(I_{Z_{\mathrm{red}}},I_{Z_{\mathrm{red}}})\oplus \Hom(\sO_p,\sO_p) \to \Hom(I_{Z_{\mathrm{red}}},\sO_p) \]
has one-dimensional image (the span of the natural map $I_{Z_{\mathrm{red}}}\to \sO_p$ of \eqref{eq_ses_IZ_IZred_Op}).
\end{proof}

\section{Moduli spaces of objects of \texorpdfstring{$\Ku(Y)$}{Ku(Y)}}\label{sec_obj_and_mod_in_ku}

For the rest of this note, $Y$ will denote a general Veronese double cone (we will follow the notation of Section \ref{sec_Veronese_double_cones}). When a result holds for all Fano threefolds of Picard rank 1 and index 2, we will make it explicit.
In this section, we construct three families of objects of $\Ku(Y)$ and show that they are related by a rotation. More precisely, we show that the set $\{ \pm\kappa_1,\pm\kappa_2,\pm(\kappa_1-\kappa_2) \}$ is an orbit of the action of $\mathsf R_*$ on $N(\Ku(Y))$. 

As a result, Corollary \ref{cor_iso_of_moduli} yields an isomorphism of the corresponding moduli spaces. 

We start by defining the three families of objects:

\begin{enumerate}[label=\textbf{(\Alph*)}]
    \item \label{item_Mp}  For any Fano threefold $Y$ of Picard rank one, index 2, and degree $d$, we can consider projections of skyscraper sheaves to $\Ku(Y)$: for $p\in Y\setminus \{y_0\}$, the projection $\pi(\C_p)$ of $\C_p$ is the complex $M_p[1]$, defined as the cone
\begin{equation}\label{eq_def_of_Mp}
     \sO_Y^{d+1} \to I_p(1)\to M_p.
\end{equation}
The projection of $y_0$ on the other hand, is defined by
    \begin{equation*}\label{eq:def_of_My0}
        \sO_Y^3\oplus \sO_Y[-1]\to I_{y_0}(1)\to M_{y_0}.
    \end{equation*}
We have $[M_p]=\kappa_2-d\kappa_1$.
\item \label{item_Ep} A second family of objects are the complexes $E_p$ studied in \cite{APR19}. They have class $\kappa_2$, and are defined by the distinguished triangle 
\[ \sO_Y(-1)[1] \to E_p \to I_p \]
for any point $p\in Y$.
\item \label{item_FpGx} Assume now that $Y$ has degree 1. Then, we can construct another class of objects as follows. For a point $p\in Y\setminus \{y_0\}$, let $x\coloneqq \phi_H(p)\in \pr 2$ and let $C\coloneqq C_x$ be the corresponding genus 1 curve (notation as in Sec. \ref{sec_Fano_scheme_of_lines}). Then, $H^0(\sO_C(p))=\C$, and we consider the cone of the triangle
\begin{equation}
    \label{eq_def_Fp}
    \sO_Y \to \sO_C(p) \to F_p.
\end{equation}
Similarly, define complexes associated with $y_0$: for all $x\in \pr 2$, $y_0\in C_x$ and $H^0(\sO_{C_x}(y_0))=\C$ as above, so we write 
\begin{equation}
    \label{eq_def_Gx}
    \sO_Y \to \sO_{C_x}(y_0) \to G_{x}
\end{equation}
for the corresponding cones. 
\end{enumerate}

\begin{remark}
\label{rmk_Gx_notin_Ku}
\begin{itemize}
    \item[--] The numerical class of $F_p$ and $G_x$ is $-\kappa_1$. In fact, $\sO_C(p)$ (and $\sO_{C_x}(y_0)$) has the same Hilbert polynomial as $\sO_\ell$ for any line $\ell\subset Y$, so $[F_p]=[G_x]=-[I_\ell]=-\kappa_1$;
    \item[--] The objects $F_p$ belong to $\Ku(Y)$: the vanishing $\Hom(\sO_Y(1),F_p)=0$ follows from \eqref{eq_def_Fp}  and the observation that the sheaves $\sO_Y(-1)$ and $\sO_C(p-y_0)$ have no cohomologies. Similarly, the isomorphism $\mathbf R\Hom(\sO_Y,\sO_Y)\simeq \mathbf R\Hom(\sO_Y,\sO_C(p))$ implies the vanishing of $\Hom(\sO_Y,F_p)$.
    \item[--] On the other hand, the objects $G_x\notin \Ku(Y)$. 
    Note, in fact, that for any curve $C_x$ we have  
\[ \sO_{C_x}\otimes \sO_Y(1)\simeq \sO_{C_x}(y_0), \]
since $C_x$ is defined by two linear forms, and a third one will intersect $C_x$ precisely at the base locus of $\abs{\sO_Y(1)}$, which is $y_0$.
Then, by \eqref{eq_def_Gx} we have
\[\Hom(\sO_Y(1),G_x)\simeq \Hom(\sO_Y(1),\sO_{C_x}(y_0)) \simeq \Hom (\sO_Y,\sO_{C_x}(y_0-y_0))=\C.\]
\end{itemize}
\end{remark}

 The three classes of objects \ref{item_Mp}, \ref{item_Ep}, and \ref{item_FpGx} are related by rotations:

\begin{lemma}\label{lem_R(Ep)=Mp}
We have $\mathsf R(E_p)=M_p$ for every $p\in Y$. This holds for $Y$ of any degree. 
\end{lemma}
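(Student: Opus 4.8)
The plan is to apply the rotation functor $\mathsf R$ directly to the triangle defining $E_p$ and to identify each of the three resulting objects. Since $\mathsf R = \mathbb L_{\sO_Y}(-\otimes \sO_Y(1))$ is the composition of the exact autoequivalence $-\otimes \sO_Y(1)$ with the exact left mutation $\mathbb L_{\sO_Y}$, it is a triangulated functor, so applying it to
\[ \sO_Y(-1)[1] \to E_p \to I_p \xrightarrow{+1} \]
produces a distinguished triangle
\[ \mathsf R(\sO_Y(-1)[1]) \to \mathsf R(E_p) \to \mathsf R(I_p) \xrightarrow{+1}. \]
It then suffices to compute the two outer terms.

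For the left term, I would use that $\mathsf R(\sO_Y(-1)) = \mathbb L_{\sO_Y}(\sO_Y(-1)\otimes \sO_Y(1)) = \mathbb L_{\sO_Y}(\sO_Y)$. Since $\sO_Y$ is exceptional, the evaluation $\mathbf R\Hom(\sO_Y,\sO_Y)\otimes \sO_Y \to \sO_Y$ is the identity $\sO_Y \to \sO_Y$, whose cone vanishes; hence $\mathsf R(\sO_Y(-1)) = 0$ and so $\mathsf R(\sO_Y(-1)[1]) = 0$. The triangle above then collapses to an isomorphism $\mathsf R(E_p)\simeq \mathsf R(I_p)$.

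For the right term, the key observation is that $\mathsf R(I_p) = \mathbb L_{\sO_Y}(I_p(1))$ is exactly $M_p$. Recall $M_p[1] = \pi(\C_p) = \mathbb L_{\sO_Y}\mathbb L_{\sO_Y(1)}(\C_p)$. The inner mutation is computed from the evaluation $\mathbf R\Hom(\sO_Y(1),\C_p)\otimes \sO_Y(1)\to \C_p$, which is the surjection $\sO_Y(1)\twoheadrightarrow \C_p$, so its cone is $I_p(1)[1]$ and $\mathbb L_{\sO_Y(1)}(\C_p) = I_p(1)[1]$. Consequently $M_p[1] = \mathbb L_{\sO_Y}(I_p(1))[1]$, i.e.\ $M_p = \mathbb L_{\sO_Y}(I_p(1)) = \mathsf R(I_p)$; combined with the previous step this gives $\mathsf R(E_p) = M_p$.

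I do not expect a serious obstacle here, as the argument is essentially bookkeeping of mutations. The one point deserving care is to check that the isomorphism is the expected one, and in particular functorial in $p$: namely that the evaluation map $\sO_Y^{d+1}\to I_p(1)$ appearing in the definition of $M_p$ (with $\mathbf R\Hom(\sO_Y, I_p(1)) = \C^{d+1}$, using $h^0(\sO_Y(1)) = d+2$ together with surjectivity of evaluation at $p$) is literally the one produced by $\mathbb L_{\sO_Y}$, so that the identification $\mathsf R(I_p) = M_p$ holds on the nose rather than merely up to an abstract isomorphism.
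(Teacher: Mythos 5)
Your proof is correct and follows essentially the same route as the paper: twist the defining triangle of $E_p$ by $\sO_Y(1)$, use that $\mathbb L_{\sO_Y}(\sO_Y)=0$ to collapse the triangle to $\mathsf R(E_p)\simeq \mathbb L_{\sO_Y}(I_p(1))$, and then recognize this mutation as the defining triangle \eqref{eq_def_of_Mp} of $M_p$. Your extra verification that $\mathbb L_{\sO_Y(1)}(\C_p)=I_p(1)[1]$ (so that $\pi(\C_p)=M_p[1]$) and that the evaluation map matches the one in \eqref{eq_def_of_Mp} is a welcome amplification of what the paper leaves implicit, but it is not a different argument.
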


\begin{proof}
Twist the defining sequence of $E_p$:
\[ \sO_Y[1] \to E_p(1) \to I_p(1) \]
and mutating across $\sO_Y$ shows $\mathsf R(E_p)\simeq \mathbb L_{\sO_Y}(I_p(1))$. Then, observe that \eqref{eq_def_of_Mp} computes $\mathbb L_{\sO_Y}(I_p(1))$.
\end{proof}

Recall that $\iota\colon Y\to Y$ is the involution corresponding to the double cover $\phi_{2H}\colon Y\to K$. Then we have: 

\begin{lemma}
\label{lem_R(Mp)=Fip}
For $p\neq y_0$, we have $\mathsf R(M_p)=F_{\iota(p)}$.
\end{lemma}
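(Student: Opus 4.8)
## Proof Proposal

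The plan is to follow the same strategy that worked for Lemma \ref{lem_R(Ep)=Mp}, namely to unwind the definition of the rotation functor $\mathsf R(-) = \mathbb L_{\sO_Y}(-\otimes \sO_Y(1))$ applied to $M_p$, and to recognize the outcome as the complex $F_{\iota(p)}$ defined by the triangle in Equation \eqref{eq_def_Fp}. First I would twist the defining triangle \eqref{eq_def_of_Mp} of $M_p$ (here $d=1$, so it reads $\sO_Y^{\oplus 2}\to I_p(1)\to M_p$) by $\sO_Y(1)$, obtaining a triangle $\sO_Y(1)^{\oplus 2}\to I_p(2)\to M_p(1)$. Then I would apply $\mathbb L_{\sO_Y}$ to $M_p(1)$ and compute the evaluation cone, keeping careful track of the morphisms between the line bundles and the ideal sheaf twists involved.

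The key step will be identifying the resulting object with the cone $F_{\iota(p)}$ of $\sO_Y\to \sO_{C}(\iota(p))$, where $C=C_x$ is the genus-1 curve through $\iota(p)$ (equivalently through $p$, since $C_x$ is preserved by $\iota$, as both $p$ and $\iota(p)$ lie over the same point $x=\phi_H(p)\in \pr 2$ by the commutative diagram \eqref{eq_diagram_rational_maps}). The appearance of the involution $\iota$ should be explained geometrically: the fiber $\phi_{2H}^{-1}(\phi_{2H}(p))$ consists of the two points $\{p,\iota(p)\}$ on the elliptic curve $C$, and the twist by $\sO_Y(1)$ restricted to $C$ sends $\sO_{C}(p)$ to $\sO_{C}(\iota(p))$ up to the shift by $y_0$ already recorded in Remark \ref{rmk_Gx_notin_Ku}. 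Concretely, I would use the isomorphism $\sO_{C}\otimes\sO_Y(1)\simeq \sO_{C}(y_0)$ from Remark \ref{rmk_Gx_notin_Ku} together with the relation $p+\iota(p)\sim y_0 + (\text{base point})$ on $C$ coming from the degree-2 structure of $\phi_{2H}|_C$, to see that tensoring $\sO_C(p)$ by $\sO_Y(1)$ produces $\sO_C(\iota(p))$ after the appropriate normalization.

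The main obstacle I anticipate is bookkeeping the mutation correctly: one must verify that $\mathbf R\Hom(\sO_Y, M_p(1))$ has the expected dimension so that the evaluation triangle $\mathbf R\Hom(\sO_Y,M_p(1))\otimes \sO_Y \to M_p(1)\to \mathbb L_{\sO_Y}(M_p(1))$ simplifies to the sheaf-level triangle \eqref{eq_def_Fp} for $F_{\iota(p)}$. This requires computing cohomology of $I_p(2)$ and of the line bundles, and confirming that the two copies of $\sO_Y(1)$ in the twisted sequence for $M_p$ combine with the mutation to leave exactly one copy of $\sO_Y$ mapping into $\sO_C(\iota(p))$. I would handle this by splicing the triangle for $M_p(1)$ with the evaluation triangle and chasing the resulting octahedron, reducing everything to the restriction of $\sO_Y(1)$ to the curve $C$. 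Once the line-bundle contributions are accounted for, matching the residual cone against the definition \eqref{eq_def_Fp} of $F_{\iota(p)}$ should be immediate, and the class computation $[\mathsf R(M_p)] = \mathsf R_*(\kappa_2-\kappa_1)=-\kappa_1=[F_{\iota(p)}]$ provides a numerical consistency check.
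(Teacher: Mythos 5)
Your proposal is correct and follows essentially the same route as the paper's proof: twist the defining triangle \eqref{eq_def_of_Mp}, apply $\mathbb L_{\sO_Y}$, and identify the resulting cone with $\sO_Y \to \sO_C(\iota(p))$ via the geometry of the double cover restricted to $C$. The one point to tighten is which line bundle on $C$ actually appears: it is the cokernel of $I_C(2)\hookrightarrow I_p(2)$, namely $\sO_C(-p)\otimes \sO_Y(2)_{|C}\simeq \sO_C(2y_0-p)$ (not $\sO_C(p)\otimes\sO_Y(1)_{|C}$, which has degree $2$), after which your relation $p+\iota(p)\sim 2y_0$, read off from the fibers of the degree-$2$ map $\phi_{2H|C}$, identifies it with $\sO_C(\iota(p))$ exactly where the paper instead invokes the Weierstrass form of $C$ and the fact that $\iota_{|C}$ is the elliptic involution.
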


\begin{proof}
By its definition, the cohomologies of $M_p(1)$ are those of the complex $[\sO_Y^2(1)\xrightarrow{ev} I_p(2)]$. The kernel of the evaluation map is $\sO_Y$, and the cokernel is the cokernel of the inclusion $I_C(2)\to I_p(2)$, which is $\sO_C(2y_0-p)$,
where $C\coloneqq C_{\phi_H(p)}$. 
This shows that $\mathsf R(M_p)=\mathbb L_{\sO_Y}(\sO_C(2y_0-p))$. One then checks that the divisor $2y_0-p$ on $C$ is linearly equivalent to $\iota(p)$, by considering the Weierstrass equation for $C$ in $\mathbb P(1,1,1,2,3)$ and observing that taking inverses coincides with applying $\iota_{|C}$. Therefore, $\mathsf R(M_p)=F_{\iota(p)}$.
\end{proof}

From Lemmas \ref{lem_R(Ep)=Mp} and \ref{lem_R(Mp)=Fip} we get:

\begin{corollary}\label{cor_matrix_of_R*}
The matrix associated to $\mathsf R_*$ in the basis $\kappa_1,\kappa_2$ is  $ \begin{pmatrix}
   0& -1\\1&1
\end{pmatrix}. $
In particular, $\mathsf R_*$ acts transitively on the set $\{\pm\kappa_1,\pm\kappa_2,\pm(\kappa_1-\kappa_2)\}$ of classes in $N(\Ku(Y))$ with square $-1$.
\end{corollary}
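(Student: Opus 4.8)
The plan is to turn the two preceding lemmas into explicit values of $\mathsf R_*$ on the basis, then assemble the matrix and analyze its order. First I would record the action on $\kappa_2$. Lemma \ref{lem_R(Ep)=Mp} gives $\mathsf R(E_p)=M_p$, and by items \ref{item_Mp}, \ref{item_Ep} we have $[E_p]=\kappa_2$ and $[M_p]=\kappa_2-d\kappa_1$; specializing to the degree-$1$ case this yields $\mathsf R_*(\kappa_2)=\kappa_2-\kappa_1$. Next, Lemma \ref{lem_R(Mp)=Fip} gives $\mathsf R(M_p)=F_{\iota(p)}$, and since $[F_p]=-\kappa_1$ (Remark \ref{rmk_Gx_notin_Ku}) and $[M_p]=\kappa_2-\kappa_1$, this reads $\mathsf R_*(\kappa_2-\kappa_1)=-\kappa_1$.

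Second, I would solve for the remaining value. Using linearity of $\mathsf R_*$ together with the two relations $\mathsf R_*(\kappa_2)=\kappa_2-\kappa_1$ and $\mathsf R_*(\kappa_2-\kappa_1)=-\kappa_1$, we get $\mathsf R_*(\kappa_1)=\mathsf R_*(\kappa_2)-\mathsf R_*(\kappa_2-\kappa_1)=(\kappa_2-\kappa_1)-(-\kappa_1)=\kappa_2$. Writing $\mathsf R_*(\kappa_1)=\kappa_2$ and $\mathsf R_*(\kappa_2)=-\kappa_1+\kappa_2$ as columns in the basis $(\kappa_1,\kappa_2)$ gives exactly the claimed matrix $\bigl(\begin{smallmatrix} 0 & -1 \\ 1 & 1\end{smallmatrix}\bigr)$.

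Finally, for transitivity on the six square-$(-1)$ classes, I would iterate $\mathsf R_*$ starting from $\kappa_1$ and check that the orbit is a single $6$-cycle: $\kappa_1\mapsto \kappa_2 \mapsto \kappa_2-\kappa_1 \mapsto -\kappa_1 \mapsto -\kappa_2 \mapsto \kappa_1-\kappa_2 \mapsto \kappa_1$. Equivalently, the characteristic polynomial of the matrix is $\lambda^2-\lambda+1$, whose roots are primitive sixth roots of unity, so $\mathsf R_*$ has order $6$; since an autoequivalence induces an isometry of the Euler form, $\mathsf R_*$ permutes the finite set of six classes of square $-1$, and the explicit $6$-cycle shows the action is transitive.

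I do not anticipate a serious obstacle here: the mathematical content is entirely carried by Lemmas \ref{lem_R(Ep)=Mp} and \ref{lem_R(Mp)=Fip}. The only point requiring care is that the two lemmas give $\mathsf R_*$ directly on $\kappa_2$ and on $\kappa_2-\kappa_1$ rather than on both basis vectors, so one must solve the small linear system to recover $\mathsf R_*(\kappa_1)$; and one should confirm that the orbit of $\kappa_1$ genuinely exhausts all six classes rather than closing into a shorter cycle, which the order-$6$ computation guarantees.
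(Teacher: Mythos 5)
Your proposal is correct and follows essentially the same route as the paper's proof: both extract $\mathsf R_*(\kappa_2)=\kappa_2-\kappa_1$ and $\mathsf R_*(\kappa_2-\kappa_1)=-\kappa_1$ from Lemmas \ref{lem_R(Ep)=Mp} and \ref{lem_R(Mp)=Fip}, then deduce the matrix and the transitivity, which the paper dismisses as ``straightforward.'' Your write-up simply makes explicit the linear-algebra step recovering $\mathsf R_*(\kappa_1)=\kappa_2$ and the verification of the $6$-cycle, both of which check out.
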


\begin{proof}
By Lemma \ref{lem_R(Ep)=Mp}, we have $\mathsf R_*(\kappa_2)=\kappa_2-\kappa_1$, and by Lemma \ref{lem_R(Mp)=Fip} we have $\mathsf R_*(\kappa_2-\kappa_1)=-\kappa_1$, the rest is straightforward.
\end{proof}

\begin{remark}[Homological dimension]\label{rmk_hom_dim}
The heart $\sA(\alpha,\beta)$ has homological dimension 2 if $d=2,3$ \cite{PY20}. This is false in the case $d=1$. In fact, by Lemmas \ref{lem_R(Ep)=Mp} and \ref{lem_R(Mp)=Fip} above we have $E_{\iota(p)}\simeq \mathsf R^{-2}(F_p)$ for $p\neq y_0$ in $Y$. Then, by Serre duality and Lemma \ref{lem_serre_dual},
\[ \Ext^3(F_p,E_{\iota(p)})\simeq \Hom(F_{p},\mathsf R^{-2}(F_{p})[3]) \simeq \Hom(F_{p},F_{p})^*\neq 0. \]
\end{remark}

We now recollect the results of this section in the following theorem (we use the same notation $M_1$ for the copy of $F(Y)$ embedded as an irreducible component in $M_G(\kappa_1)$ (Theorem \ref{thm_Gieseker}) and in $M_\sigma(-\kappa_1)$).

\begin{thm}\label{thm_three_moduli}
Let $Y$ be a general smooth Veronese double cone, and let $\sigma$ be a stability condition in $\mathcal K$ (see Section \ref{sec_KuzComp}). The moduli spaces $M_\sigma(-\kappa_1), M_\sigma(-\kappa_2)$ and $M_\sigma(\kappa_1-\kappa_2)$ are isomorphic. They have two irreducible components $M_1$ and $M_3$ isomorphic respectively to the Fano surface of lines $F(Y)$ and to $Y$ itself, intersecting along $C_0$. The generic point of the component $Y$ parameterizes, respectively, objects of form $F_p$, $E_p$, and $M_p$.
\end{thm}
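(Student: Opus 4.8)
The plan is to obtain the three isomorphisms formally from the rotation functor, and then to describe the two components by transporting the known description of a single representative moduli space. \emph{Isomorphisms.} Using the matrix of $\mathsf R_*$ from Corollary \ref{cor_matrix_of_R*}, I compute $\mathsf R_*(-\kappa_1)=-\kappa_2$ and $\mathsf R_*(-\kappa_2)=\kappa_1-\kappa_2$, so the three classes lie on a single $\mathsf R_*$-orbit; Corollary \ref{cor_iso_of_moduli} then yields $M_\sigma(-\kappa_1)\simeq M_\sigma(-\kappa_2)\simeq M_\sigma(\kappa_1-\kappa_2)$, with the isomorphisms realized by $\mathsf R$ itself. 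Lemmas \ref{lem_R(Ep)=Mp} and \ref{lem_R(Mp)=Fip} track generic objects, $\mathsf R\colon E_p\mapsto M_p\mapsto F_{\iota(p)}$, and since $\iota$ is an automorphism of $Y$ these families sweep out the same space. I also use that the shift $[1]$ acts by $-1$ on $N(\Ku(Y))$ and preserves $\sigma$-stability, so $M_\sigma(\kappa)\simeq M_\sigma(-\kappa)$; this lets me pass between $\kappa_2$ and $-\kappa_2$ (and between $\kappa_2-\kappa_1$ and $\kappa_1-\kappa_2$) freely, matching the objects $E_p,M_p$ listed in the statement up to shift.

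\emph{The component $M_3\simeq Y$.} By the isomorphisms above it suffices to describe one of the three spaces, and I would take $M_\sigma(\kappa_2)$, whose description in \cite{APR19} furnishes a component $M_3\simeq Y$ with generic point $E_p$. Transporting along $\mathsf R$ and tracking objects by the two Lemmas produces a component isomorphic to $Y$ in each of the three moduli spaces, with generic points $F_p$, $E_p$, $M_p$ respectively. Crucially, $E_p$ is defined for \emph{all} $p\in Y$ (including the base point $y_0$), so the entire copy of $Y$ is obtained on the $E_p$ side and then transported; this sidesteps the fact that $F_{y_0}$ is not directly available, since $G_x\notin\Ku(Y)$ by Remark \ref{rmk_Gx_notin_Ku}.

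\emph{The component $M_1\simeq F(Y)$.} The genuinely new input is to realize the Fano surface of lines $M_1=F(Y)$ of Theorem \ref{thm_Gieseker} as the second component. For a smooth line $\ell\subset Y$ I would: (i) verify $I_\ell\in\Ku(Y)$ through the vanishings $\Hom(\sO_Y(j),I_\ell[i])=0$ for $j=0,1$ and all $i$, using a resolution of $\ell$ as in \eqref{eq_Koszul_resolution_Zred}; (ii) show $I_\ell$ is $\sigma$-stable, starting from its Gieseker stability and checking that no wall of tilt-stability is crossed inside the chamber $\sK$, as in \cite{PY20}; (iii) use the universal family of ideal sheaves over $F(Y)$ to build a morphism $F(Y)\to M_\sigma(\kappa_1)$, which is injective because $I_\ell$ determines $\ell$ and which identifies $F(Y)$ with a full component because $\Ext^1_{\Ku}(I_\ell,I_\ell)=\Ext^1_{D^b(Y)}(I_\ell,I_\ell)=2=\dim F(Y)$ at smooth points. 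Composing with the shift $[1]$ lands $M_1$ inside $M_\sigma(-\kappa_1)$.

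\emph{Intersection and exhaustiveness.} That $M_1$ and $M_3$ are the only components follows by combining the classification of $\sigma$-stable objects of class $\kappa_1$ with the count of components in \cite{APR19}. For the intersection, I would identify $M_1\cap M_3$ with the locus where the two families of objects coincide: this happens exactly when $C_{\phi_H(p)}$ degenerates to a singular rational curve, i.e.\ when $\phi_H(p)\in C_0$, in which case the relevant $F_p$ becomes the shifted ideal sheaf of a singular line; on the $M_1=F(Y)$ side this is precisely the curve $C_0$ of singular lines appearing in Theorem \ref{thm_Gieseker} (via \cite{Tihomirov1982}), so $M_1\cap M_3\simeq C_0$. \textbf{The main obstacle} I expect lies in this last step together with step (iii): controlling $\sigma$-stability of $I_\ell$ across the whole chamber, proving the map $F(Y)\to M_\sigma(\kappa_1)$ is onto an entire component rather than merely a locally closed immersion, and pinning down the gluing locus as exactly $C_0$. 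This is where the comparison between Gieseker and Bridgeland stability and the generality hypotheses on $Y$ and $C_0$ are genuinely used.
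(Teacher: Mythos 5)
Your first two steps (the isomorphisms via Corollaries \ref{cor_iso_of_moduli} and \ref{cor_matrix_of_R*}, and the component $M_3\simeq Y$ imported from \cite{APR19} and transported by $\mathsf R$) coincide with the paper's proof --- which in fact consists entirely of such citations: the paper takes the \emph{whole} component description, both components, their identification with $F(Y)$ and $Y$, and the intersection along $C_0$, from \cite[Theor.\ 1.5]{APR19}, so the only genuinely new input is the orbit computation of Corollary \ref{cor_matrix_of_R*} (your sign bookkeeping with the shift $[1]$ is correct). Where you diverge, by constructing the $F(Y)$-component directly, there is a genuine gap. The morphism in your step (iii) does not exist as described: the universal family of ideal sheaves over $F(Y)$ does not take values in $\Ku(Y)$ along the curve $C_0$. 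Indeed, if $Z$ is a singular line (a rational curve $C$ of arithmetic genus $1$ with an embedded point $p$ at the node), the sequence $0\to \C_p\to \sO_Z\to \sO_C\to 0$ gives $h^0(\sO_Z)=2$, hence $\Hom(\sO_Y,I_Z[1])\neq 0$ and $I_Z\notin\Ku(Y)$. For the same reason your description of the gluing locus is incorrect: for $p$ the node of $C$, the objects $F_p$ and $I_Z[1]$ are \emph{not} isomorphic; they are the two opposite extensions built from $I_C[1]$ and $\C_p$ (compare the defining triangle of $F_p$ with the rotation of $I_Z\to I_C\to \C_p$), and only $F_p$ lies in $\Ku(Y)$. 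So over $C_0$ the points of $M_1\subset M_\sigma(-\kappa_1)$ are not shifted ideal sheaves, and any family over $F(Y)$ would have to be modified along $C_0$ before it maps to $M_\sigma(-\kappa_1)$ --- an elementary-modification argument of the kind the paper carries out for $\tilde{M}_3$ in the proof of Theorem \ref{thm_stable_pairs}.

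Your exhaustiveness step is also either unsupported or circular. A classification of $\sigma$-stable objects of class $\kappa_1$ in $\Ku(Y)$ is not available: Theorem \ref{thm_classification} classifies $\sigma_{\alpha,\beta}$- and $\sigma^0_{\alpha,\beta}$-semistable objects in $D^b(Y)$, and the failure of these notions to match $\sigma$-stability at $d=1$ is precisely the point of the paper ($F_{y_0}$ is $\sigma$-stable but not $\sigma^0_{\alpha,\beta}$-semistable, while $G_x$ and the $I_Z[1]$ of types \ref{itm:Z_embedded_pt}, \ref{itm:Z=CuP} are $\sigma^0_{\alpha,\beta}$-semistable but do not lie in $\Ku(Y)$). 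If instead you invoke the ``count of components'' from \cite{APR19}, you are invoking the full statement of \cite[Theor.\ 1.5]{APR19}, which, once transported by rotation as in your first step, already yields both components and the intersection $C_0$ --- making your steps (i)--(iii) redundant. In short: either complete the direct construction, which requires the family modification above plus an independent exhaustiveness argument, or follow the paper and let \cite{APR19} together with the rotation orbit do all the work.
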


\begin{proof}
Corollaries \ref{cor_iso_of_moduli} and \ref{cor_matrix_of_R*} yield the isomorphism of moduli spaces. The description of the irreducible components is \cite[Theor. 1.2]{APR19}. The statement on the general objects follows again from Lemmas \ref{lem_R(Ep)=Mp} and \ref{lem_R(Mp)=Fip}. 
\end{proof}

We conclude the section describing the objects $E_{y_0}, M_{y_0}$, and $F_{y_0}\coloneqq \mathsf R^2(E_{y_0})$: these correspond to the point $y_0$ in the component $Y$ of the three moduli spaces of Theorem \ref{thm_three_moduli}, and they are of a different nature from the others.

\begin{proposition}[Rotations at $y_0$]\label{lem_rot_at_k}
We have $\mathsf R(E_{y_0})=M_{y_0}$, a complex with cohomologies 
\begin{equation*}
    \begin{split}
         \sH^{-1}(M_{y_0})&\simeq \coker(\sO_Y(-2)\to \sO_Y(-1)^{\oplus 3})\\
         \sH^0(M_{y_0})&=\sO_Y.
    \end{split}
\end{equation*}
The complex $F_{y_0}$ has three cohomologies, and it fits in a triangle
\begin{equation}\label{eq_destab_tr_Fk}
     \sO_Y(-1)[2]\to F_{y_0}\to [\sO_Y^{\oplus 3} \to \sO_Y(1)].
\end{equation}
\end{proposition}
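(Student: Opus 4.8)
The first identity $\mathsf R(E_{y_0})=M_{y_0}$ is just Lemma \ref{lem_R(Ep)=Mp} at $p=y_0$, so the content is the description of the cohomologies. I would begin by recording the single geometric input that separates $y_0$ from a general point: it is the base point of $|H|$, so the three sections $x_0,x_1,x_2$ spanning $V=H^0(\sO_Y(1))$ all vanish at $y_0$ and, $Y$ being smooth at $y_0$ (where they restrict to local coordinates), they form a regular sequence cutting out $y_0$ as a reduced point. Twisting the resulting Koszul resolution by $\sO_Y(1)$ yields
\[ 0\to \sO_Y(-2)\to \sO_Y(-1)^{\oplus 3}\to \sO_Y^{\oplus 3}\xrightarrow{(x_0,x_1,x_2)} I_{y_0}(1)\to 0, \]
and I set $N\coloneqq \coker(\sO_Y(-2)\to \sO_Y(-1)^{\oplus 3})=\ker(\sO_Y^{\oplus 3}\to I_{y_0}(1))$.

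For $M_{y_0}=\mathbb L_{\sO_Y}(I_{y_0}(1))$ the key computation is $\mathbf R\Hom(\sO_Y,I_{y_0}(1))=\mathbf R\Gamma(I_{y_0}(1))$. From $0\to I_{y_0}(1)\to \sO_Y(1)\to \sO_{y_0}\to 0$ the evaluation $H^0(\sO_Y(1))\to H^0(\sO_{y_0})$ vanishes \emph{precisely because} $y_0$ is a base point, so $h^0(I_{y_0}(1))=3$ and $h^1(I_{y_0}(1))=1$, higher cohomology vanishing by Kodaira (using $K_Y=-2H$). Thus the mutation triangle reads $(\sO_Y^{\oplus 3}\oplus \sO_Y[-1])\to I_{y_0}(1)\to M_{y_0}$. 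I would compute this as an iterated cone: the restriction to $\sO_Y^{\oplus 3}$ is the surjection onto $I_{y_0}(1)$ with kernel $N$, so $M_{y_0}=\mathrm{Cone}(\sO_Y[-1]\to N[1])$ for some class in $\Ext^2(\sO_Y,N)$. In either case the cohomology long exact sequence of this triangle gives $\sH^{-1}(M_{y_0})=N$, $\sH^0(M_{y_0})=\sO_Y$ and nothing else; the crucial point is that the cone of a map $\sO_Y[-1]\to N[1]$ places the $\sO_Y$-contribution in degree $0$, not $1$. The extension is in fact nonzero, forced by $M_{y_0}\in\Ku(Y)$ (i.e. $\mathbf R\Hom(\sO_Y,M_{y_0})=0$), but this is not needed for the cohomology statement.

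For $F_{y_0}=\mathsf R^2(E_{y_0})=\mathsf R(M_{y_0})=\mathbb L_{\sO_Y}(M_{y_0}(1))$ I would feed the canonical truncation triangle $N[1]\to M_{y_0}\to \sO_Y$ into $\mathsf R$: twisting by $\sO_Y(1)$ and applying the exact functor $\mathbb L_{\sO_Y}$ produces $\mathbb L_{\sO_Y}(N(1))[1]\to F_{y_0}\to \mathbb L_{\sO_Y}(\sO_Y(1))$. The right-hand term is $\mathbb L_{\sO_Y}(\sO_Y(1))=[\sO_Y^{\oplus 3}\to \sO_Y(1)]$, the cone of the evaluation $(x_0,x_1,x_2)$. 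For the left-hand term, $N(1)$ fits in $0\to \sO_Y(-1)\to \sO_Y^{\oplus 3}\to N(1)\to 0$, and since $\sO_Y(-1)$ is acyclic, $N(1)$ is globally generated with $\mathbf R\Gamma(N(1))=\C^3$ identifying the evaluation with the displayed surjection; hence $\mathbb L_{\sO_Y}(N(1))=\sO_Y(-1)[1]$ and $\mathbb L_{\sO_Y}(N(1))[1]=\sO_Y(-1)[2]$. This is exactly the claimed triangle \eqref{eq_destab_tr_Fk}, and its cohomology long exact sequence then exhibits the three cohomology sheaves of $F_{y_0}$, namely $\sO_Y(-1)$, $N$, and $\sO_{y_0}$ in degrees $-2,-1,0$.

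The main obstacle is the middle step: because $h^1(I_{y_0}(1))=1$, the evaluation defining $M_{y_0}$ is a morphism out of a genuine two-term complex rather than a sheaf, so its cohomology cannot be read off from a single short exact sequence. The careful part is to organize the mutation as the iterated cone above and to track the degree shift so that the extra copy of $\sO_Y$ lands in degree $0$. A secondary point requiring justification is the Koszul resolution itself, i.e. that $x_0,x_1,x_2$ genuinely form a regular sequence at the smooth base point $y_0$.
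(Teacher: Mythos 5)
Your proof is correct and takes essentially the same route as the paper: the Koszul resolution of $y_0$ cut out by the three sections of $\sO_Y(1)$, the computation $h^\bullet(I_{y_0}(1))=(3,1,0,0)$ forced by the base point, and cohomology long exact sequences of the mutation triangles (your $\sO_Y[-1]$ summand is the correct one, fixing the paper's typo $\sO_Y[1]$). The only organizational difference is in the second step, where you apply the exact functor $\mathbb L_{\sO_Y}((-)\otimes\sO_Y(1))$ to the truncation triangle $N[1]\to M_{y_0}\to\sO_Y$ and mutate each piece, obtaining \eqref{eq_destab_tr_Fk} directly, whereas the paper reads the cohomologies of $F_{y_0}$ off a single long exact sequence for the evaluation triangle of $M_{y_0}(1)$ — the same computation, packaged slightly more cleanly on your end.
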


\begin{proof}
One shows as above that $\mathsf R(E_{y_0})=M_{y_0}$, the (shift of the) projection of the skyscraper sheaf $\C_{y_0}$ to $\Ku(Y)$.
All hyperplane section of $Y$ pass through $y_0$. In other words, $M_{y_0}$ is defined by an exact triangle
\[ \sO_Y^{\oplus 3}\oplus \sO_Y[-1] \to I_{y_0}(1) \to M_{y_0} \]
whose cohomology sequence is
\begin{equation}
    \label{eq_cohomology_of_M_k} 0\to \sH^{-1}(M_{y_0}) \to \sO_Y^{\oplus 3} \xrightarrow{ev} I_{y_0}(1) \to \sH^0(M_{y_0}) \to \sO_Y \to 0, 
\end{equation}
where the evaluation map $ev$ is surjective, and coincides with the last map of a Koszul complex on three linear forms. Therefore,  $\sH^{-1}(M_{y_0})\simeq \coker(\sO_Y(-2)\to \sO_Y(-1)^{\oplus 3})$ and $\sH^0(M_{y_0})=\sO_Y$.

To compute $F_{y_0} = \mathsf R(M_{y_0})$, compute the cohomology sheaves of $M_{y_0}(1)$ by twisting \eqref{eq_cohomology_of_M_k}, and write the cohomology sequence of the triangle
\[ \sO_Y^{\oplus 3} \oplus \sO_Y^{\oplus 3}[1] \to M_{y_0}(1) \to F_{y_0}.  \]
It reads
\[ 0\to \sO_Y(-1) \to \sO_Y^3 \to \coker (\sO_Y(-1)\to \sO_Y^{\oplus 3}) \xrightarrow{0}  \sH^{-1}(F_{y_0}) \to \sO_Y^3 \to \sO_Y(1) \to \C_{y_0} \to 0,\]
whence the claim.
\end{proof}

\section{Set-theoretic considerations}\label{sec_set_theoretic}

\subsection{Stable complexes of class \texorpdfstring{$\kappa_1$}{k1}}
\label{sec_tilt_stability}

In this section, we classify objects of class $\kappa_1$ that are semistable with respect to $\sigma^0_{\alpha,\beta}$ and $\sigma_{\alpha,\beta}$. Here, $\sigma$ denotes one of the stability conditions of Theorem \ref{thm_stab_cond_on_Ku}. 

Our classification shows that following the strategy of \cite{APR19} and \cite{PY20} to describe $M_\sigma(\kappa_1)$ is more difficult in this setting. In those works, moduli spaces of $\sigma$-stable objects are related via wall-crossing to moduli spaces of complexes which are stable with respect to $\sigma_{\alpha,\beta}$ and $\sigma^0_{\alpha,\beta}$. More precisely, for $v=\kappa_2$, or $d>1$ and $v=\kappa_1$, the three notions of stability coincide, and we have
\[ M_\sigma(v) \simeq M_{\sigma_{\alpha,\beta}^0}(v) \simeq M_{\sigma_{\alpha,\beta}}(v) \]
(this is also the case for cubic fourfolds, \cite{BLMS17}). If $d=1$ and $v=\kappa_1$, there are objects in $D^b(Y)$ that are $\sigma$-semistable but not $\sigma_{\alpha,\beta}$-semistable, and conversely. We will show:

\begin{thm}
\label{thm_classification}
Let $E$ be a complex in $D^b(Y)$ of class $-\kappa_1$, fix $\beta=-\frac 12$ and $\alpha\ll 1$. Then:
\begin{enumerate}[label=(\arabic*)]
    \item \label{itm_Sab} $E$ is $\sigma_{\alpha,\beta}$-semistable if and only if it is a Gieseker stable sheaf in $M_G(\kappa_1)$ (classified in Prop. \ref{prop_classification_subschemes});
    \item \label{itm_Sab0} $E$ is $\sigma^0_{\alpha,\beta}$-semistable if and only if $E$ is isomorphic to:
     \begin{enumerate}[label=(\roman*)]
         \item $F_p$, for $p\neq y_0$,  \label{itm_Fp_Sab0} 
         \item  $G_x$, for $x\in \pr 2$, or \label{itm_Gx_Sab0} 
         \item $I_\ell[1]$, where $\ell\subset Y$ is a line. \label{itm_IZ_Sab0}
     \end{enumerate}
\end{enumerate}
\end{thm}

 We start the proof with some lemmas computing $\sigma_{\alpha,\beta}$-walls in the $(\alpha,\beta)$-plane for $-\kappa_1$. Observe that, by definition of $Z_{\alpha,\beta}^0$ (see Eq. \eqref{eq_def_of_Z^0}), the same equations define numerical walls for both weak stability conditions $\sigma_{\alpha,\beta}$ and $\sigma_{\alpha,\beta}^0$.
 
\begin{lemma}\label{lem_F_p_mu_b-stable}
For $\beta=0$, objects $F_p$ and $G_x$ are strictly semistable of infinite slope in $\Coh^\beta(Y)$. In other words, the half-line $\beta=0$ is a vertical wall for $-\kappa_1$ in the $(\alpha,\beta)$-plane.
\end{lemma}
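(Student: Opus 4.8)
The plan is to exhibit $F_p$ and $G_x$ as objects of $\Coh^0(Y)$ lying at maximal phase, and to read off a decomposition into pieces of equal phase directly from their defining triangles.

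First I would compute the cohomology sheaves. The map $\sO_Y \to \sO_C(p)$ defining $F_p$ in \eqref{eq_def_Fp} factors as $\sO_Y \twoheadrightarrow \sO_C \hookrightarrow \sO_C(p)$, the second arrow being multiplication by the unique (up to scalar) section, which vanishes at $p$. Hence its kernel is $I_C$ and its cokernel is the skyscraper $\sO_p$, so that $\sH^{-1}(F_p) = I_C$ and $\sH^0(F_p) = \sO_p$; the same computation for \eqref{eq_def_Gx} gives $\sH^{-1}(G_x) = I_{C_x}$ and $\sH^0(G_x) = \sO_{y_0}$. Since $I_C$ is a rank-one torsion-free, hence $\mu_M$-stable, sheaf with $\mu_M(I_C)=0$, it lies in $\Coh(Y)_{\mu_M\le 0}$, while the skyscrapers lie in $\Coh(Y)_{\mu_M>0}$; this places $F_p$ and $G_x$ in $\Coh^0(Y)$. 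As their class is $-\kappa_1$ (Remark \ref{rmk_Gx_notin_Ku}), which has $\Ch_1=0$, we get $\Im Z_{\alpha,0}=H^2\Ch_1=0$, so both objects have infinite slope at $\beta=0$. In particular any subobject and quotient in $\Coh^0(Y)$ again has $\Im Z_{\alpha,0}=0$, so $F_p$ and $G_x$ are automatically semistable of infinite slope; the content of the lemma is that they are not stable.

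To see this, I would rotate the defining triangle $\sO_Y \to \sO_C(p)\to F_p$ to the short exact sequence
$$ 0\to \sO_C(p)\to F_p \to \sO_Y[1]\to 0 $$
in $\Coh^0(Y)$ (and the analogue with $\sO_{C_x}(y_0)$ for $G_x$), which makes sense because $\sO_C(p)$ is torsion and $\sO_Y$ is $\mu_M$-stable of slope $0$. Both terms are boundary objects of infinite slope: $\sO_C(p)$ is a stable pure one-dimensional sheaf (a line bundle on the integral curve $C$), and $\sO_Y[1]$ is the shift of a $\mu_M$-stable sheaf of slope $\beta=0$, so both are $\sigma_{\alpha,0}$-semistable. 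Thus $F_p$ has the proper subobject $\sO_C(p)$ of the same (maximal) phase and is strictly semistable; the same works for $G_x$, the only extra point being that for $x\in C_0$ the curve $C_x$ is singular but still integral, so $\sO_{C_x}(y_0)$ remains a stable pure sheaf.

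Finally, to identify this with the vertical wall, I would observe that the destabilizing subobject $\sO_C(p)$ has $v=(0,0,1)$, hence $H^2\Ch_1=0$ and infinite slope for every $(\alpha,\beta)$; the equation $\mu_{\alpha,\beta}(-\kappa_1)=\mu_{\alpha,\beta}(\sO_C(p))$ is then solved exactly along $\beta=0$. By Theorem \ref{thm_Nested_Wall} this half-line is the unique non-semicircular numerical wall for $-\kappa_1$, and the sequence above realizes it as an actual wall. I expect the main obstacle to be the bookkeeping at the boundary of the heart: one has to justify carefully that $\sO_Y[1]$ and the pure one-dimensional sheaves are genuinely $\sigma_{\alpha,0}$-semistable of infinite slope, and that the short exact sequence really lives in $\Coh^0(Y)$ rather than only in $D^b(Y)$.
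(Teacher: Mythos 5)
Your proof is correct and takes essentially the same route as the paper's: both arguments rotate the defining triangle into the short exact sequence $0\to\sO_C(p)\to F_p\to\sO_Y[1]\to 0$ in $\Coh^0(Y)$ (and its analogue for $G_x$), observe that $\Im Z_{\alpha,0}$ vanishes on both factors since $\Ch^{\beta=0}_{\leq 2}(\sO_Y)=(1,0,0)$ and $\Ch^{\beta=0}_{\leq 2}(\sO_C(p))=(0,0,H^2)$, and conclude strict semistability at infinite slope, hence the vertical wall. Your extra steps — computing the cohomology sheaves $I_C$ and $\C_p$ to verify membership in $\Coh^0(Y)$, and matching the destabilizing class against Theorem \ref{thm_Nested_Wall} to pin down $\beta=0$ as the unique vertical numerical wall — simply make explicit what the paper leaves implicit.
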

\begin{proof}
The complex $F_p$ fits into the exact triangle
$$ \sO_C(p)\to F_p\to \sO_Y[1].$$
Both $\sO_C(p)$ and $\sO_Y[1]$ are semistable of infinite slope in $\Coh^\beta(Y)$: it is straightforward to compute that $\Im Z_{\alpha,\beta}(-)$ vanishes on both $\sO_Y$ and $\sO_C(p)$ since 
\[ \Ch_{\leq 2}^{\beta=0}(\sO_Y) = (1,0,0) \qquad \mbox{ and } \qquad \Ch_{\leq 2}^{\beta=0}(\sO_C(p)) = (0,0,H^2). \qedhere \]
\end{proof}

\begin{lemma}\label{lem_largest_Wall}
There are no actual walls for $-\kappa_1$ in the strip $-1<\beta<0$. 
\end{lemma}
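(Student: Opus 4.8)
The plan is to determine the geometry of the tilt-stability walls for $\kappa_1$ and to show that the largest one only touches the boundary line $\beta=-1$. In the coordinates $v=(H^3\Ch_0,H^2\Ch_1,H\Ch_2)$ one has $v(-\kappa_1)=(-1,0,1)$, so the $\beta$-invariant discriminant is $\bar\Delta(-\kappa_1)=(H^2\Ch_1)^2-2(H^3\Ch_0)(H\Ch_2)=2$. By Lemma \ref{lem_F_p_mu_b-stable} the only vertical numerical wall lies at $\beta=0$, on the boundary of the strip, so by Theorem \ref{thm_Nested_Wall} it suffices to control the semicircular walls. Solving $\mu_{\alpha,\beta}(-\kappa_1)=\mu_{\alpha,\beta}(F)$ for a potential destabilizer $F$ of class $(r,c,d)$ with $c=H^2\Ch_1(F)\neq 0$, I would show the wall is the semicircle centered at $(\beta_0,0)$ with $\beta_0=\frac{r+d}{c}$ and $\rho^2=\beta_0^2-2$; a short computation then gives that it meets the open strip $-1<\beta<0$ precisely when $\beta_0<-\tfrac32$, i.e. when $\rho^2>\tfrac14$. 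Thus the lemma reduces to proving that every actual wall satisfies $\rho^2\le\tfrac14$.

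Next I would use the numerical constraints attached to an actual wall. If $0\to F\to E\to G\to 0$ realizes the wall with $F,G$ tilt-semistable of equal slope, the support property gives $\bar\Delta(F),\bar\Delta(G)\ge 0$, and the standard Hodge-index inequality for the discriminant form gives $\bar\Delta(F)+\bar\Delta(G)\le\bar\Delta(E)=2$. Equating the radius computed from $F$ with the one computed from $G$ forces $r-d=1+\tfrac12(\bar\Delta(F)-\bar\Delta(G))$, and one rewrites $\rho^2=\frac{(r-d)^2-2\bar\Delta(F)}{c^2}$ using $c^2=\bar\Delta(F)+2rd$. The sum bound leaves only finitely many pairs $(\bar\Delta(F),\bar\Delta(G))$; the only ones giving a positive radius are $(0,0),(1,0),(0,1)$, while $(1,1),(2,0),(0,2)$ give $\rho^2\le 0$.

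To finish I would treat these three cases. For $(1,0)$ and $(0,1)$ the formula reads $\rho^2=\frac{1}{4c^2}\le\tfrac14$ since $|c|\ge 1$, with equality occurring for the class $(0,1,-\tfrac32)$, which is exactly the wall induced by $\sO_Y(-1)$ at $\beta_0=-\tfrac32$. The case $(0,0)$ is the delicate one, since there $\rho^2=\frac1{c^2}$ would exceed $\tfrac14$ when $|c|=1$; here I would invoke integrality of the Chern character, as the constraints $c^2=2rd$ and $r-d=1$ reduce, writing $c=2e$, to the Pell equation $k^2-8e^2=1$, whose solutions force $|c|\ge 2$ and hence $\rho^2\le\tfrac14$ again. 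Consequently every actual wall has $\rho^2\le\tfrac14$, the extremal one merely touching $\beta=-1$, so none enters the open strip.

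The main obstacle is precisely this $(0,0)$ case: the Bogomolov-type sum inequality by itself does not exclude a larger wall, and one must combine it with lattice integrality to rule out the would-be destabilizer. A secondary point to check carefully is heart-membership — namely that the numerical solutions surviving the discriminant bound can be realized by genuine sub- and quotient objects in $\Coh^\beta(Y)$ with positive imaginary central charge — so that purely numerical walls carrying no actual destabilizing sequence are correctly discarded.
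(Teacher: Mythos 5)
Your proof is correct, but it takes a genuinely different route from the paper's. The paper never computes radii of walls for candidate destabilizers: it first shows that no actual wall can touch the line $\beta=-1$, using that there $\Im Z_{\alpha,-1}(-)=H^2\Ch_1^{-1}(-)$ takes on the class in question the value $1$, which is the smallest positive value of that linear form, so in any destabilizing sequence one of the two factors has vanishing central charge and the support property then forces its truncated character to vanish; it then exhibits the numerical wall of $\sO_Y(-1)$, the semicircle of radius $\tfrac12$ centered at $\beta=-\tfrac32$ (spanning $-2\le\beta\le -1$), and invokes Theorem \ref{thm_Nested_Wall}: a wall entering the strip would have to contain that semicircle, hence cross $\beta=-1$, a contradiction. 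You instead bound all semicircular walls directly: center $\beta_0=\tfrac{r+d}{c}$, radius $\rho^2=\beta_0^2-2$, the strip is met iff $\rho^2>\tfrac14$, and actual walls satisfy $\rho^2\le\tfrac14$ by combining the support property $\overline{\Delta}(F),\overline{\Delta}(G)\ge 0$ with the superadditivity $\overline{\Delta}(F)+\overline{\Delta}(G)\le\overline{\Delta}(E)=2$ and integrality. That superadditivity is a genuine and standard fact, but it is not a Hodge-index statement; cite it as the convexity lemma of \cite[Appendix A]{BMS16}. What each approach buys: the paper's argument is shorter and avoids all case analysis by exploiting primitivity of the class along the slice $\beta=-1$; yours is more quantitative, pinning down the extremal wall and the admissible discriminants of destabilizers, and it does not depend on finding a slice where the class becomes primitive, so it transfers more readily to other classes.

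Three points to tighten. First, the relation $r-d=1+\tfrac12(\overline{\Delta}(F)-\overline{\Delta}(G))$ does not come from ``equating the radius computed from $F$ with the one computed from $G$'' — those two radii agree identically and give no constraint; it comes from expanding $\overline{\Delta}(G)$ with $v(G)=v(E)-v(F)$, where the decomposition must be of $\kappa_1=(1,0,-1)$ (the sign whose objects lie in $\Coh^\beta(Y)$ for $-1<\beta<0$), not of $(-1,0,1)$; this only flips the sign of $r-d$, so your case analysis is unaffected. Second, in the $(0,0)$ case the Pell equation is a detour, and ``writing $c=2e$'' presupposes the evenness you need: the actual point is that $r-d=\pm1$ forces $d\in\Z$, so $c^2=2rd$ is even, hence $c$ is even and $|c|\ge 2$ (while $c=0$ yields no semicircular wall at all). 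Third, your closing worry about heart-membership is unnecessary: your argument only needs the numerical constraints to be necessary consequences of an actual wall, never that a numerical wall be realized by genuine subobjects.
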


\begin{proof}
By Lemma \ref{lem_F_p_mu_b-stable}, the line $\beta=0$ is a vertical wall. 

Next, we show that no actual walls intersect the line $\beta=-1$. 
Suppose otherwise that for some $\alpha>0$ there is an actual wall, realized by a sequence of $\sigma_{\alpha,-1}$-semistable complexes 
\begin{equation}\label{eq_Wall}
    0\to E\to F\to G\to 0
\end{equation} in $\Coh^{-1}(Y)$. 
Observe that for any $\alpha>0$ and any $F$ of class $-\kappa_1$,  $\Im Z_{\alpha,-1}(F)=1$ is the smallest positive value of
$$\Im Z_{\alpha,-1}(-)=H^2\Ch^{-1}_1(-).$$
Then, either $\Im Z_{\alpha,-1}(E)=1$, and therefore $Z_{\alpha,{-1}}(G)=0$, or $\Im Z_{\alpha,-1}(G)=1$ and $Z_{\alpha,{-1}}(E)=0$. Assume the former: since $G$ is $\sigma_{\alpha,{-1}}$-semistable, the support property implies 
$\Ch_{\leq 2} G=0$, which means that \eqref{eq_Wall} is not an actual wall. The same argument works in the latter case swapping the roles of $E$, $G$. 

By Theorem \ref{thm_Nested_Wall}, walls are nested semicircles in the $(\alpha,\beta)$-plane. Therefore it suffices to find a semicircular wall outside the strip $-1<\beta<0$. A standard computation (sketched below for the ease of reading) shows that the class $[\sO_Y(-1)]$ defines a numerical wall on the semicircle with radius $\frac{1}{2}$ and center $(0,-\frac{3}{2})$
We have:
\[ \Ch_{\leq 2}^\beta(\sO_Y(-1))= \left(1,-H-\beta H, \frac{H^2}{2}+\beta H^2 + \frac{\beta^2}{2}H^2  \right), \]
\[ Z_{\alpha,\beta}(\sO_Y(-1))=\left( \frac{\alpha^2-\beta^2}{2} -\frac{1}{2}-\beta \right)- i \left(1+\beta \right), \]
\[ Z_{\alpha,\beta}(F)=\left( \frac{\alpha^2-\beta^2}{2} +1 \right)- i\beta. \]

Then, the condition that $\mu_{\alpha,\beta}(\sO_Y(-1)) = \mu_{\alpha,\beta}(F)$ simplifies to 
\[ \alpha^2 + \left( \beta + \frac 32\right)^2 = \frac 14,\]
the desired semicircle.
\end{proof}

\begin{lemma}\label{lem_F_p_stable}
Objects $F_p$ and $G_x$ are $\sigma^0_{\alpha,\beta}$-semistable for $\alpha>0, -1<\beta<0$. 
\end{lemma}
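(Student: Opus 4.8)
The plan is to reduce the $\sigma^0_{\alpha,\beta}$-semistability of $F_p$ and $G_x$ to the tilt-stability of an auxiliary ideal sheaf, isolating the object of vanishing central charge which is exactly what makes these genuine complexes semistable for $\sigma^0_{\alpha,\beta}$ (and not for $\sigma_{\alpha,\beta}$).

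The key input is the tilt-stability of the ideal sheaf $I_C$ of $C\coloneqq C_{\phi_H(p)}$. First I would record that, in the lattice $\Lambda$, one has $v(I_C)=(1,0,-1)=v(I_\ell)$ for the ideal of a line $\ell$, so $I_C$ has the same numerical walls as the class $-\kappa_1$. By Lemma \ref{lem_largest_Wall} none of these walls lie in the strip $-1<\beta<0$, the relevant walls sitting in $\beta\le-1$. Since $I_C$ is a $\mu_M$-stable torsion-free sheaf it is $\sigma_{\alpha,\beta}$-stable in the large-volume limit $\alpha\gg0$, and the wall-free region connects that limit to the whole strip; hence $I_C$ is tilt-stable for every $(\alpha,\beta)$ with $-1<\beta<0$, where moreover a direct computation gives $\mu_{\alpha,\beta}(I_C)<0$.

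Next I would place $F_p$ in the heart. The triangle \eqref{eq_def_Fp} presents $F_p$ as the cone of $\sO_Y\to\sO_C(p)$, and the sheaf kernel and cokernel $I_C$ and $\C_p$ of this map also compute its $\Coh^\beta(Y)$-cohomology. As $\mu_{\alpha,\beta}(I_C)<0$ and $\C_p$ is torsion, $I_C\in\Coh^\beta(Y)_{\mu_{\alpha,\beta}\le0}$ and $\C_p\in\Coh^\beta(Y)_{\mu_{\alpha,\beta}>0}$, so $F_p\in\Coh^0_{\alpha,\beta}(Y)$ and there is a short exact sequence
\[ 0\to I_C[1]\to F_p\to\C_p\to0 \]
in this heart. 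Since $Z^0_{\alpha,\beta}(\C_p)=0$, the quotient $\C_p$ has slope $+\infty$ and $\mu^0_{\alpha,\beta}(I_C[1])=\mu^0_{\alpha,\beta}(F_p)$. Applying $\Hom(\C_p,-)$ to \eqref{eq_def_Fp} and using $\Hom(\C_p,\sO_C(p))=0$ and $\Ext^1(\C_p,\sO_Y)=0$ gives $\Hom(\C_p,F_p)=0$, so the sequence is non-split and $\C_p$ is not a subobject of $F_p$. Now $\C_p$ is simple in $\Coh^0_{\alpha,\beta}(Y)$ and every subobject of it has vanishing central charge; therefore for any $0\ne A\hookrightarrow F_p$ the image of $A$ in $\C_p$ contributes nothing to $Z^0_{\alpha,\beta}$, whence $Z^0_{\alpha,\beta}(A)=Z^0_{\alpha,\beta}(A\cap I_C[1])$ with $A\cap I_C[1]\ne0$ (otherwise $A$ would embed into $\C_p$, forcing $A\cong\C_p\hookrightarrow F_p$ and contradicting $\Hom(\C_p,F_p)=0$). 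Tilt-stability of $I_C$ together with $\mu_{\alpha,\beta}(I_C)<0$ makes $I_C[1]$ $\sigma^0_{\alpha,\beta}$-semistable (the rotation defining $\sigma^0_{\alpha,\beta}$ preserves semistability of objects of nonzero central charge), so $\mu^0_{\alpha,\beta}(A)\le\mu^0_{\alpha,\beta}(I_C[1])=\mu^0_{\alpha,\beta}(F_p)$ and $F_p$ is $\sigma^0_{\alpha,\beta}$-semistable.

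The same argument applies verbatim to $G_x$, with $C_x$ and $\C_{y_0}$ in place of $C$ and $\C_p$; when $x\in C_0$ the curve $C_x$ is singular, but this is immaterial since only $v(I_{C_x})=(1,0,-1)$, the $\mu_M$-stability of $I_{C_x}$, and the Koszul resolution \eqref{eq_Koszul_resolution_Zred} of Lemma \ref{lem_exti(IC,OC)} enter the computation. I expect the main obstacle to be twofold: establishing the tilt-stability of $I_C$ uniformly across the strip, and correctly accounting for the object $\C_p$, whose vanishing central charge is precisely what separates $\sigma^0_{\alpha,\beta}$-semistability from $\sigma_{\alpha,\beta}$-semistability and lets the genuine complexes $F_p$ and $G_x$ be semistable.
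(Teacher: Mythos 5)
Your route is genuinely different from the paper's. The paper works with the extension $\sO_C(p)\to F_p\to \sO_Y[1]$: Lemma \ref{lem_F_p_mu_b-stable} shows both factors have purely real central charge at $\beta=0$, so $F_p$ is $\sigma^0_{\alpha,0}$-semistable of slope $0$ on the vertical wall, and then one crosses that wall, noting that for $\beta<0$ the subobject $\sO_C(p)$ has slope $0<\mu^0_{\alpha,\beta}(\sO_Y[1])$ and that Lemma \ref{lem_largest_Wall} excludes any further wall in the strip. You instead use the $\Coh(Y)$-cohomology decomposition $I_C[1]\to F_p\to \C_p$, reduce to tilt-stability of $I_C$ (correctly deduced from Lemma \ref{lem_largest_Wall}, since $I_C$ and $I_\ell$ have the same class in $\Lambda$, together with the large-volume limit), and then estimate slopes directly at each point of the strip. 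This buys a pointwise verification that needs neither Lemma \ref{lem_F_p_mu_b-stable} nor the transfer of semistability across the rotation at $\beta=0$, and it isolates cleanly the role of the zero-charge piece $\C_p$.

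However, your last step has a real gap: you test semistability on \emph{subobjects}, while the paper defines (semi)stability through \emph{quotients}, and for a weak stability function the two are not equivalent. The implication you invoke --- ``$I_C[1]$ is $\sigma^0_{\alpha,\beta}$-semistable, hence every subobject $A'\subseteq I_C[1]$ satisfies $\mu^0_{\alpha,\beta}(A')\le\mu^0_{\alpha,\beta}(I_C[1])$'' --- is false in general, exactly because of subobjects of vanishing charge: if $Z^0_{\alpha,\beta}(A')=0$ then $\mu^0_{\alpha,\beta}(A')=+\infty$, and semistability of the ambient object does not forbid such $A'$. The paper's own objects give a counterexample to the criterion you are using: for $Z$ of type \ref{itm:Z_embedded_pt}, the sheaf sequence $I_Z\to I_{Z_{\mathrm{red}}}\to \sO_p$ exhibits $\C_p$ as a subobject of $I_Z[1]$ in $\Coh^0_{\alpha,\beta}(Y)$ of slope $+\infty$, yet $I_Z[1]$ is $\sigma^0_{\alpha,\beta}$-semistable by Theorem \ref{thm_classification}. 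The fix is short and keeps your structure intact: argue on quotients. Given $F_p\twoheadrightarrow Q$ in $\Coh^0_{\alpha,\beta}(Y)$ with kernel $A$, set $A'=\ker(A\to\C_p)$; then $A'$ is a subobject of $I_C[1]$, the image of $A$ in $\C_p$ has vanishing charge, and $Z^0_{\alpha,\beta}(F_p)=Z^0_{\alpha,\beta}(I_C[1])$, so $Z^0_{\alpha,\beta}(Q)=Z^0_{\alpha,\beta}\bigl(I_C[1]/A'\bigr)$. Since $I_C[1]/A'$ is a quotient of the $\sigma^0_{\alpha,\beta}$-semistable object $I_C[1]$ and slopes depend only on charges, $\mu^0_{\alpha,\beta}(Q)\ge\mu^0_{\alpha,\beta}(I_C[1])=\mu^0_{\alpha,\beta}(F_p)$, which is precisely the paper's definition of semistability. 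With this reformulation you also no longer need $\Hom(\C_p,F_p)=0$ or the simplicity of $\C_p$ in the double-tilted heart (the latter you asserted without proof); the same correction applies verbatim to $G_x$.
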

\begin{proof}
Lemma \ref{lem_F_p_mu_b-stable}  implies that $F_p$ is $\sigma_{\alpha,0}^0$-semistable in $\Coh^0_{\alpha,0}(Y)$ of slope $0$, arguing as in the proof of \cite[Lemma 2.16]{BLMS17}\footnote{A similar argument is used to show \cite[Prop. 4.1]{FP21}, which directly applies to this case and implies that $F_p$ is $\sigma_{\alpha,0}^0$-semistable.}.

Suppose for the moment that $-1\ll\beta<0$ and $\alpha>-\beta$. Then we have $\sO_Y[1],\sO_C(p)\in \Coh^0_{\alpha,\beta}(Y)$, and therefore $F_p\in \Coh^0_{\alpha,\beta}(Y)$ (although $F_p\notin \Coh^\beta(Y)$ since $\sO_Y[1]\notin \Coh^\beta(Y)$). 
Since 
$$\mu^0_{\alpha,\beta}(\sO_Y[1])>0=\mu^0_{\alpha,\beta}(\sO_C(p)),$$
$F_p$ is $\sigma^0_{\alpha,\beta}$-semistable. 

Since walls for $\sigma^0_{\alpha,\beta}$-stability coincide with those for tilt-stability, $F_p$ remains semistable in the region left of the vertical wall $\beta=0$ and outside of the largest  semicircular wall of Lemma \ref{lem_largest_Wall}. In particular, $F_p$ is $\sigma^0_{\alpha,\beta}$-semistable for all $-1<\beta<0$ and all $\alpha >0$. 
\end{proof}

\begin{proposition}\label{prop_stable_objects1}
Let $\beta=-\frac 12$ and $\alpha\ll 1$. Then, the objects listed  
in Theorem \ref{thm_classification} \ref{itm_Sab0} are $\sigma^0_{\alpha,\beta}$-semistable.
\end{proposition}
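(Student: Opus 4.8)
The plan is to dispatch the three types separately, using that the first two are already done. Indeed, cases \ref{itm_Fp_Sab0} and \ref{itm_Gx_Sab0} follow immediately from Lemma \ref{lem_F_p_stable}, which asserts that $F_p$ (for $p\neq y_0$) and $G_x$ are $\sigma^0_{\alpha,\beta}$-semistable for every $\alpha>0$ and $-1<\beta<0$, in particular at $\beta=-\tfrac12$ with $\alpha\ll 1$. So all the content lies in case \ref{itm_IZ_Sab0}, the shifted ideal sheaves $I_Z[1]$, and I would treat this in three steps: show $I_Z$ is $\sigma_{\alpha,\beta}$-semistable, show $I_Z[1]$ lands in the heart $\Coh^0_{\alpha,\beta}(Y)$, and transfer semistability across the rotation.

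First I would establish that $I_Z$ is $\sigma_{\alpha,\beta}$-semistable. By Proposition \ref{prop_classification_subschemes} the sheaf $I_Z$ is the ideal sheaf of a subscheme of type \ref{itm:Z_line}, \ref{itm:Z_embedded_pt} or \ref{itm:Z=CuP}, hence torsion-free of rank $1$ and therefore $\mu_M$-stable. Since $\mu_M(I_Z)=0>-\tfrac12=\beta$ we have $I_Z\in\Coh^\beta(Y)$. In the large-volume regime $\alpha\to\infty$ along $\beta=-\tfrac12$, tilt-stability of a slope-stable torsion-free sheaf reduces to its slope stability, so $I_Z$ is $\sigma_{\alpha,\beta}$-stable there; and by Lemma \ref{lem_largest_Wall} together with the nesting of Theorem \ref{thm_Nested_Wall}, the ray $\{(\alpha,-\tfrac12):\alpha>0\}$ meets no actual wall for $-\kappa_1$ (the outermost wall, cut out by $\sO_Y(-1)$, sits in $-2<\beta<-1$). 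Hence $I_Z$ remains $\sigma_{\alpha,\beta}$-semistable all the way down to $\alpha\ll1$. A short computation then places $I_Z[1]$ in the correct heart: with $\Ch(I_Z)=(1,0,-\ell,\ast)$ and $H\cdot\ell=1$, one finds at $\beta=-\tfrac12$ that $\Re Z_{\alpha,\beta}(I_Z)=\tfrac78+\tfrac{\alpha^2}{2}>0$ and $\Im Z_{\alpha,\beta}(I_Z)=\tfrac12>0$, so $\mu_{\alpha,\beta}(I_Z)<0$, giving $I_Z\in\Coh^\beta(Y)_{\mu_{\alpha,\beta}\leq0}$ and therefore $I_Z[1]\in\Coh^0_{\alpha,\beta}(Y)$.

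Finally I would transfer semistability from $\sigma_{\alpha,\beta}$ to $\sigma^0_{\alpha,\beta}$. The heart $\Coh^0_{\alpha,\beta}(Y)$ is the tilt of $\Coh^\beta(Y)$ at tilt-slope $0$ and $Z^0_{\alpha,\beta}=-iZ_{\alpha,\beta}$; this rotate-and-tilt passage is exactly an instance of the $\widetilde{\GL}_2^+(\R)$-relabeling, with phase reparametrization $g(\phi)=\phi+\tfrac12$, under which $\sigma_{\alpha,\beta}$- and $\sigma^0_{\alpha,\beta}$-semistable objects coincide up to shift. Since $I_Z$ is $\sigma_{\alpha,\beta}$-semistable and $I_Z[1]$ lies in $\Coh^0_{\alpha,\beta}(Y)$, I conclude that $I_Z[1]$ is $\sigma^0_{\alpha,\beta}$-semistable. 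I expect this last step to be the main obstacle: for a genuinely \emph{weak} (rather than Bridgeland) stability condition the rotation by $-i$ maps the locus $\{\Im Z_{\alpha,\beta}=0\}$ to $\{\Re Z_{\alpha,\beta}=0\}$, so the relabeling must be justified away from the classes on which $Z_{\alpha,\beta}$ degenerates (harmless for $I_Z$, where $\Im Z_{\alpha,\beta}(I_Z)=\tfrac12\neq0$). Should one prefer to avoid invoking the rotation altogether, the alternative is to assume a destabilizing sequence $0\to A\to I_Z[1]\to B\to0$ in $\Coh^0_{\alpha,\beta}(Y)$, pass to the long exact sequence of $\Coh^\beta(Y)$-cohomology sheaves (forcing $\sH^0(B)=0$ and identifying $\sH^0(A)$ as a cokernel of a map out of $I_Z$), and contradict the tilt-semistability of $I_Z$ established above — a longer but self-contained see-saw argument.
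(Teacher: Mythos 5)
Your treatment of cases \ref{itm_Fp_Sab0} and \ref{itm_Gx_Sab0} via Lemma \ref{lem_F_p_stable} is exactly what the paper does, and for case \ref{itm_IZ_Sab0} the paper simply invokes the argument of \cite[Prop. 4.1]{PY20}, which is the argument you reconstruct: slope stability of $I_Z$ gives tilt-stability for $\alpha\gg 0$, Lemma \ref{lem_largest_Wall} rules out actual walls along $\beta=-\frac12$, and the slope computation places $I_Z[1]$ in $\Coh^0_{\alpha,\beta}(Y)$. The one real issue is your primary mechanism for the final transfer step: the $\widetilde{\GL}_2^+(\R)$-relabeling is not a theorem you can cite for \emph{weak} stability conditions (the paper defines this action only on $\Stab(\Ku(Y))$), and the claim that $\sigma_{\alpha,\beta}$- and $\sigma^0_{\alpha,\beta}$-semistable objects ``coincide up to shift'' is in fact false in this very setting: $F_p$ is $\sigma^0_{\alpha,\beta}$-semistable for $-1<\beta<0$, yet no shift of $F_p$ lies in $\Coh^\beta(Y)$ there, since its $\Coh(Y)$-cohomologies are $I_C$ and $\C_p$ and $\mu_M(I_C)=0>\beta$. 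So the see-saw argument you relegate to an alternative must be the actual proof: from $0\to A\to I_Z[1]\to B\to 0$ in $\Coh^0_{\alpha,\beta}(Y)$ one gets $\sH^0(B)=0$ and $0\to \sH^{-1}(A)\to I_Z\to \sH^{-1}(B)\to \sH^0(A)\to 0$, and applying tilt-semistability of $I_Z$ to the quotient $I_Z/\sH^{-1}(A)$, together with the sign constraints on $Z_{\alpha,\beta}$ over $\Coh^\beta(Y)_{\mu_{\alpha,\beta}>0}$ and $\Coh^\beta(Y)_{\mu_{\alpha,\beta}\leq 0}$, yields the required slope inequality (this works because $\mu_{\alpha,\beta}(I_Z)<0$ strictly, as your computation shows). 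With that substitution your proof is complete and coincides in substance with the one the paper outsources to \cite{PY20}.
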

\begin{proof}
The same argument as \cite[Prop. 4.1]{PY20} applies to the $I_\ell[1]$ and implies that they are $\sigma^0_{\alpha,\beta}$-semistable.
The other objects are $\sigma^0_{\alpha,\beta}$-semistable by Lemma \ref{lem_F_p_stable}.
\end{proof}

\begin{remark}\label{rmk_IZ1_unstable}
If $Z$ is one of the subschemes listed in Prop. \ref{prop_classification_subschemes}, but not a line, the argument of \cite[Prop. 4.1]{PY20} still applies. However, it only implies that $I_Z$ is $\sigma_{\alpha,\beta}$-semistable. In fact, in cases \ref{itm:Z_embedded_pt} and \ref{itm:Z=CuP} of Prop. \ref{prop_classification_subschemes}, $I_Z[1]$ fits into an exact triangle
\begin{equation}
\label{eq_wall_crossing_CpIC}
 \C_p \to I_Z[1] \to I_C[1],
\end{equation}
where $C$ is a degree 1 genus 1 curve, and $p$ is a point embedded in (respectively, disjoint from) $C$.
This is a destabilizing sequence in $\sigma^0_{\alpha,\beta}$\footnote{It is noteworthy that this is a class of objects to which \cite[Prop. 4.1]{FP21} does not apply.}. 

The tradeoff  between $\sigma_{\alpha,\beta}$-stable and $\sigma^0_{\alpha,\beta}$-stable objects behaves like a wall-crossing. Indeed, the extension group $\Ext^1(\C_p,I_C[1])$ in the direction opposite to \eqref{eq_wall_crossing_CpIC} vanishes for disconnected schemes (type \ref{itm:Z=CuP} in Prop. \ref{prop_classification_subschemes}) and is one-dimensional for non-reduced schemes (as computed in the proof of Prop. \ref{prop_ext1=5_non_reduced_smooth}). The corresponding non-trivial extensions are precisely the objects $F_p$ (if $p\neq y_0$) and $G_x$ (if $p=y_0$).
There are no evident walls crossed by the rotation from $\sigma_{\alpha,\beta}$ to $\sigma^0_{\alpha,\beta}$, but it should be possible to find a corresponding wall in the stability manifold of $Y$.
\end{remark}

Next, we show that the objects listed in Theorem \ref{thm_classification} are the only $\sigma^0_{\alpha,\beta}$-semistable objects.

\begin{proposition}\label{prop_semistable_objects2}
Let $\beta=-\frac 12$ and $\alpha \ll  1$. Suppose $F$ is $\sigma^0_{\alpha,\beta}$-semistable object of class $-\kappa_1$. Then $F$ is one of the objects \ref{itm_Fp_Sab0}, \ref{itm_Gx_Sab0} in Theorem \ref{thm_classification}\ref{itm_Sab0}, or $F=I_Z[1]$ where $Z$ is a subscheme as in Prop. \ref{prop_classification_subschemes}.
\end{proposition}
\begin{proof}
Follows from lemmas \ref{lem_coh_to_coh0} and \ref{lem_stable_k1} below.
\end{proof}
\begin{lemma}\label{lem_coh_to_coh0}
For $F$ as in Prop. \ref{prop_semistable_objects2},  there is a triangle
$$F'[1]\to F\to T$$
where $F'\in \Coh^\beta(Y)$ is $\sigma_{\alpha,\beta}$-semistable, and $T$ is either $0$ or $\C_p$ for some $p\in Y$.
\end{lemma}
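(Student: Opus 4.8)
The plan is to produce the triangle by taking cohomology of $F$ with respect to the bounded $t$-structure with heart $\Coh^\beta(Y)$. Because $F$ lies in $\Coh^0_{\alpha,\beta}(Y)$, which is a tilt of $\Coh^\beta(Y)$ at $\mu_{\alpha,\beta}=0$, this cohomology is concentrated in degrees $-1,0$: writing $F':=\sH^{-1}_{\Coh^\beta}(F)\in\Coh^\beta(Y)_{\mu_{\alpha,\beta}\le 0}$ and $T:=\sH^0_{\Coh^\beta}(F)\in\Coh^\beta(Y)_{\mu_{\alpha,\beta}>0}$, the canonical triangle $F'[1]\to F\to T$ is a short exact sequence in $\Coh^0_{\alpha,\beta}(Y)$. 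It then remains to show that $T$ is $0$ or a skyscraper and that $F'$ is $\sigma_{\alpha,\beta}$-semistable. First I would record the numerology at $\beta=-\tfrac12$: a direct computation gives $\Re Z^0_{\alpha,\beta}(F)=\Im Z_{\alpha,\beta}(F)=-\tfrac12$ and $\Im Z^0_{\alpha,\beta}(F)=\tfrac78+\tfrac{\alpha^2}{2}$, so $\mu^0_{\alpha,\beta}(F)>0$.

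To see that $T$ is $0$-dimensional, the key observation is that the rotation $Z^0_{\alpha,\beta}=-iZ_{\alpha,\beta}$ carries $\Coh^\beta(Y)_{\mu_{\alpha,\beta}>0}$ into objects of non-positive $\mu^0_{\alpha,\beta}$-slope: for $T\in\Coh^\beta(Y)$ one has $\Re Z^0_{\alpha,\beta}(T)=\Im Z_{\alpha,\beta}(T)\ge 0$, hence $\mu^0_{\alpha,\beta}(T)\le 0$ whenever $\Im Z^0_{\alpha,\beta}(T)>0$. But $T$ is a quotient of the $\sigma^0_{\alpha,\beta}$-semistable $F$, so $\mu^0_{\alpha,\beta}(T)\ge\mu^0_{\alpha,\beta}(F)>0$. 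These are compatible only if $\Im Z^0_{\alpha,\beta}(T)=0$; together with the heart inequality $\Re Z^0_{\alpha,\beta}(T)\le 0$ and the bound above, this gives $Z^0_{\alpha,\beta}(T)=0$, i.e. $T\in\ker Z_{\alpha,\beta}$. The support property of $\sigma_{\alpha,\beta}$ (negative-definiteness of $Q$ on $\ker Z_{\alpha,\beta}$) then forces $v(T)=0$, so $T$ is a sheaf supported in dimension $0$.

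For the semistability of $F'$, I would argue by contradiction. If $F'$ is not $\sigma_{\alpha,\beta}$-semistable, let $A\hookrightarrow F'$ be its maximal destabilizing subobject in $\Coh^\beta(Y)$; then $\mu_{\alpha,\beta}(A)>\mu_{\alpha,\beta}(F')$, and since $F'\in\Coh^\beta(Y)_{\mu_{\alpha,\beta}\le 0}$ both $A$ and $F'/A$ again lie in $\Coh^\beta(Y)_{\mu_{\alpha,\beta}\le 0}$. Shifting by $[1]$ gives a short exact sequence in $\Coh^0_{\alpha,\beta}(Y)$, hence an inclusion $A[1]\hookrightarrow F$. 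Now I compare slopes across the rotation: for $E\in\Coh^\beta(Y)_{\mu_{\alpha,\beta}\le 0}$ the slope $\mu^0_{\alpha,\beta}(E[1])$ is a strictly increasing function of the phase of $Z_{\alpha,\beta}(E)$, exactly as $\mu_{\alpha,\beta}(E)$ is, so $\mu_{\alpha,\beta}(A)>\mu_{\alpha,\beta}(F')$ yields $\mu^0_{\alpha,\beta}(A[1])>\mu^0_{\alpha,\beta}(F'[1])$. Since $Z^0_{\alpha,\beta}(T)=0$ we have $\mu^0_{\alpha,\beta}(F'[1])=\mu^0_{\alpha,\beta}(F)$, so $A[1]$ is a subobject of $F$ of strictly larger slope, contradicting the semistability of $F$.

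It remains to improve ``$0$-dimensional'' to ``$0$ or $\C_p$''. The sequence shows $v(F')=v(\kappa_1)$, so $F'$ has rank $1$, $c_1=0$ and $H\Ch_2=-1$; by Lemma \ref{lem_largest_Wall} there are no walls for this class in the strip $-1<\beta<0$, so $\sigma_{\alpha,\beta}$-semistability is constant along $\beta=-\tfrac12$ and, letting $\alpha\to\infty$, identifies $F'$ with a slope-semistable sheaf, necessarily an ideal sheaf $I_W$ of a degree-$1$ subscheme $W$. Riemann--Roch (using $c_1(Y)=2H$) gives $\mathrm{length}(T)=\Ch_3(F')=1-\chi(\sO_W)$, while Lemma \ref{lem_curves_degree_1} bounds the arithmetic genus of the one-dimensional part of $W$ by $1$, so $\chi(\sO_W)\ge 0$ and $\mathrm{length}(T)\le 1$. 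The step I expect to be most delicate is the slope bookkeeping in the two middle paragraphs: one must keep the two torsion pairs straight and verify that the rotation $Z^0=-iZ$ reorders slopes in exactly the way that turns a tilt-destabilizer of $F'$ into a genuine $\sigma^0_{\alpha,\beta}$-destabilizer of $F$, and that the quotient $T$ really is the kernel-class part rather than an object of intermediate slope.
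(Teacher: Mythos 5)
Your proposal is correct, and its first three steps are the paper's own approach, just written out in full: the paper produces the same truncation triangle from the torsion pair defining $\Coh^0_{\alpha,\beta}(Y)$, asserts in one sentence that $\sigma^0_{\alpha,\beta}$-semistability of $F$ forces $Z_{\alpha,\beta}(T)=0$ (so $T$ is zero-dimensional), and asserts in one more sentence that instability of $F'$ would imply instability of $F$; your slope bookkeeping under the rotation $Z^0_{\alpha,\beta}=-iZ_{\alpha,\beta}$ is exactly the justification it omits (with the small additional remark that the support property should be applied to the HN factors of $T$, which inherit $Z_{\alpha,\beta}=0$). Where you genuinely diverge is the bound $\mathrm{length}(T)\leq 1$. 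The paper applies the generalized Bogomolov--Gieseker inequality \cite[Conj.~4.1]{BMS16}, known for Picard rank one Fano threefolds by \cite{Li19_FanoPic1}, directly to the tilt-semistable \emph{object} $F'$ at $(\alpha,\beta)=(0,-\tfrac12)$ (and at $\beta=-1$), obtaining $\Ch_3(F')\leq 1$ with no need to know what $F'$ is; the identification of $F'$ as an ideal sheaf is deferred to Lemma \ref{lem_stable_k1}. You instead identify $F'$ as an ideal sheaf $I_W$ first (no walls in the strip plus the large-volume limit --- this is literally the argument of Lemma \ref{lem_stable_k1}, via \cite[Lemma 2.7]{BMS16}, which you should cite rather than gesture at $\alpha\to\infty$; since you use that lemma's argument and not its statement, there is no circularity), and then bound $\Ch_3(F')=1-\chi(\sO_W)\leq 1$ using the genus classification of degree-one curves (Lemma \ref{lem_curves_degree_1}). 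Your route trades the deep external input (Li's theorem) for the geometry of curves on $Y$, at the cost of front-loading the sheaf-theoretic description of $F'$; the paper's route keeps this lemma purely numerical.

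One point you should make explicit, because it is what makes ``necessarily an ideal sheaf'' true: tilt stability is only a weak stability condition and is blind to zero-dimensional torsion ($Z_{\alpha,\beta}$ vanishes on it), so semistability of $F'$ alone does not give purity, and a torsion subsheaf $\C_p\subset F'$ would add $+1$ to $\Ch_3(F')$ and destroy your bound. It is ruled out by the construction rather than by semistability: zero- and one-dimensional sheaves have infinite tilt slope and hence lie in $\Coh^\beta(Y)_{\mu_{\alpha,\beta}>0}$, whereas $F'=\sH^{-1}_{\Coh^\beta}(F)$ lies in $\Coh^\beta(Y)_{\mu_{\alpha,\beta}\leq 0}$, which is closed under subobjects; two-dimensional torsion is excluded by slope semistability. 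With that remark inserted, your argument is complete.
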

\begin{proof}
Since $F$ is in $\Coh^0_{\alpha,\beta}(Y)$, there is a triangle
$$F'[1]\to F\to T$$
with $F'\in \Coh^\beta(Y)_{\mu_{\alpha,\beta}\leq 0}$, $T\in \Coh^\beta(Y)_{\mu_{\alpha,\beta}> 0}$. Since $F$ is semistable with respect to $\mu^0_{\alpha,\beta}$, $Z_{\alpha,\beta}(T)$ has to be $0$. So, $T$ is supported on points, that is, $T$ has finite length $m$
and hence $Z_{\alpha,\beta}(F)=Z_{\alpha,\beta}(F'[1])$.
On the other hand, we claim that $F'$ must be $\sigma_{\alpha,\beta}$-semistable. Otherwise, suppose it is destabilized by some $S\subset F'$ in $\Coh^\beta(Y)$. Since  $\Coh^\beta(Y)_{\mu_{\alpha,\beta}\leq 0}$ is closed under taking subobjects, $S\in \Coh^\beta(Y)_{\mu_{\alpha,\beta}\leq 0}$ and thus $S[1]\in \Coh^0_{\alpha,\beta}(Y)$. Observe that the composition $S[1] \to F'[1] \to F$ is an inclusion in $\Coh^0_{\alpha,\beta}(Y)$, therefore $S[1]$ destabilizes $F$.

Next we prove that $m\leq 1$. It suffices to show that $\Ch_3(F')\leq 1$, since $\Ch (F')=(1,0,-H^2,mH^3)$. By \cite{Li19_FanoPic1}, \cite[Conjecture 4.1]{BMS16} holds for $F'$, for all $(\alpha,\beta)$ where it is semistable. In particular, since $F'$ is semistable along the line $\beta=-\frac{1}{2}$, the inequality holds for $\alpha=0$ and $\beta=-\frac{1}{2}$, which gives
$$4\cdot\frac{49}{64}-6 \frac{1}{2}\Ch_3^\beta(F')\geq 0$$
which simplifies to $\Ch_3 F'\leq3/2.$ This proves $m\leq 1$ (in fact the inequality for $\beta=-1$ gives the exact bound $\Ch_3 F'\leq 1$).
\end{proof}
We now classify all possibilities for $F'$ and $T$ as in Lemma \ref{lem_coh_to_coh0}.

\begin{lemma}\label{lem_stable_k1}
In the setting of Lemma \ref{lem_coh_to_coh0}, 
$F'$ is the ideal sheaf of a one-dimensional subscheme of $Y$. More precisely, there are two possibilities:
\begin{itemize}
    \item[--] if $T=0$, then $F'=I_Z$, for $Z\subset Y$ a subscheme as in Prop. \ref{prop_classification_subschemes};
    \item[--] If $T\neq 0$, then $F'=I_C$, for $C\subset Y$ a genus 1 curve of degree 1 (see Lemma \ref{lem_curves_degree_1}). In this case, $F$ is $F_p$ if $T=\C_p$, and $F$ is one of the $G_x$ if $T=\C_{y_0}$.
\end{itemize}
\end{lemma}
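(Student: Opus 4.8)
The plan is to first pin down the sheaf $F'$ numerically and then reconstruct $F$ from its extension data. From the triangle $F'[1]\to F\to T$ and $[F]=-\kappa_1$ we read off $[F']=\kappa_1+[T]$, so $\Ch(F')=(1,0,-H^2,mH^3)$ with $m=0$ if $T=0$ and $m=1$ if $T=\C_p$ (as in Lemma \ref{lem_coh_to_coh0}). Since the central charge $Z_{\alpha,\beta}$ depends only on $(\Ch_0,\Ch_1,\Ch_2)$, the numerical walls for the class of $F'$ coincide with those for $-\kappa_1$; by Lemma \ref{lem_largest_Wall} none of them cross the line $\beta=-\tfrac12$. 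Hence $F'$ is $\sigma_{\alpha,-1/2}$-semistable for all $\alpha>0$, and passing to the large-volume limit $\alpha\to\infty$ (where tilt-semistable objects of positive rank are Gieseker-semistable sheaves) shows $F'$ is a Gieseker-semistable sheaf. As a rank-one torsion-free sheaf with $\Ch_1=0$ on $Y$, its reflexive hull is a line bundle of trivial determinant, so $F'\simeq I_{Z'}$ is an ideal sheaf.

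When $T=0$ we have $[I_{Z'}]=\kappa_1$, so $Z'$ has Hilbert polynomial $t+1$ and is one of the subschemes of Proposition \ref{prop_classification_subschemes}; then $F=I_Z[1]$, which is case \ref{itm_IZ_Sab0}. When $T=\C_p$, the class $\kappa_1+[\C_p]$ equals $[I_C]$ for a degree-$1$, arithmetic-genus-$1$ curve: indeed $\chi(\sO_\ell)=1$ for a line while $\chi(\sO_C)=0$, so $[\sO_\ell]-[\sO_C]=[\C_p]$ and hence $[I_C]=\kappa_1+[\C_p]$. Thus $Z'$ is a one-dimensional subscheme of degree $1$ and arithmetic genus $1$, which by Lemma \ref{lem_curves_degree_1} is a genus-$1$ curve $C=C_x$, and $F'=I_C$.

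It remains to reconstruct $F$ in the case $T=\C_p$. The triangle $I_C[1]\to F\to \C_p$ is classified by its connecting map $\delta\in\Hom(\C_p,I_C[2])=\Ext^2(\C_p,I_C)$. First I would rule out $p\notin C$: there $I_C$ agrees with $\sO_Y$ near $p$, so $\Ext^2(\C_p,I_C)=\Ext^2(\C_p,\sO_Y)=0$ and the triangle splits, $F\simeq I_C[1]\oplus\C_p$. But $\C_p$ has $Z^0_{\alpha,\beta}(\C_p)=0$ and therefore slope $+\infty$ in $\Coh^0_{\alpha,\beta}(Y)$, so it is a destabilizing subobject of $F$ (whose slope is finite), contradicting $\sigma^0_{\alpha,\beta}$-semistability. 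Hence $p\in C$. Applying $\Hom(-,\C_p)$ to the Koszul resolution \eqref{eq_Koszul_resolution_Zred} of $I_C$, the only possibly nonzero map $\C^2\to\C$ is $(m,-l)|_p$, which vanishes because $l(p)=m(p)=0$; thus $\Ext^1(I_C,\C_p)=\C$ and, by Serre duality, $\Ext^2(\C_p,I_C)\simeq\Ext^1(I_C,\C_p)^*=\C$ for every $p\in C$.

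Consequently the nonzero extension class is unique up to scalar, and the split object being unstable, $F$ must be this unique nontrivial extension. Since $F_p$ from \eqref{eq_def_Fp} has cohomology sheaves $\sH^{-1}(F_p)=I_C$ and $\sH^0(F_p)=\C_p$ (the kernel and cokernel of $\sO_Y\to\sO_C(p)$), it is a nonsplit extension of $\C_p$ by $I_C[1]$, hence $F\simeq F_p$; the same computation with $p=y_0$, where every $C_x$ passes through $y_0$, identifies $F$ with the corresponding $G_x$ of \eqref{eq_def_Gx}. The main obstacle is this final step: controlling the extension space uniformly in $p\in C$, including when $p$ is a singular point of $C$. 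Here it is essential that $C$ is a complete intersection of two divisors, so that the global resolution \eqref{eq_Koszul_resolution_Zred} computes $\Ext^2(\C_p,I_C)$ at \emph{every} point of $C$ and pins down $F$ even on the locus of singular curves.
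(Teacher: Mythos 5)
Your proposal is correct and follows essentially the same route as the paper's proof: the absence of walls in the strip (Lemma \ref{lem_largest_Wall}) plus the large-volume limit shows $F'$ is a Gieseker-semistable sheaf, hence an ideal sheaf, and the numerical class then forces $Z'$ to be either a subscheme as in Prop.\ \ref{prop_classification_subschemes} (when $T=0$) or a genus-1 curve of degree 1 (when $T\neq 0$). Your final step --- computing $\Ext^2(\C_p,I_C)$ via the Koszul resolution \eqref{eq_Koszul_resolution_Zred} and Serre duality, ruling out $p\notin C$ by instability of the split complex, and identifying $F$ as the unique nonsplit extension --- is a more careful justification of what the paper compresses into the bare assertion that ``the only complex with cohomologies $I_C[1]$ and $\C_p$ is $F_p$,'' so it is a welcome refinement rather than a deviation.
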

\begin{proof}
Since $\Ch_{\leq 2}(F)=\Ch_{\leq 2}(F')$, Lemma \ref{lem_largest_Wall} shows that there are no walls for $F'$ in the $-1<\beta<0$ strip. Hence $F'$ is $\sigma_{\alpha,\beta}$-semistable for $\alpha\gg 0$. It follows from \cite[Lemma 2.7]{BMS16} that $F'$ is a Gieseker-semistable sheaf.

If $T=0$, then $[F']=\kappa_1$ is one of the ideal sheaves $I_Z$ classified in Prop. \ref{prop_classification_subschemes}. 

Otherwise, $F'$ is an ideal sheaf of a subscheme supported on a curve of degree 1. This is either a line or a genus one curve. It cannot be a line: otherwise, we would have $H^3 =\Ch_3 F'\leq 0$. Hence $F'$ is the ideal sheaf of a genus 1 curve $C$. The only complex with cohomologies $I_C[1]$ and $\C_p$ is $F_p$. Similarly, the $G_x$ are all the complexes with cohomologies $I_C$ and $\C_{y_0}$.
\end{proof}

\begin{proof}[Proof of Theorem \ref{thm_classification}]
The statement about $\sigma_{\alpha,\beta}$-semistable objects is proven with the same argument as \cite[Prop. 4.1]{PY20}: the authors show that $\sigma_{\alpha,\beta}$-stability coincides with Gieseker stability for $\alpha \gg 1$, and that there are no walls for objects of class $-\kappa_1$ on the line $\beta=-\frac 12$.

On the other hand, Propositions \ref{prop_stable_objects1} and \ref{prop_semistable_objects2}, combined with Remark \ref{rmk_IZ1_unstable}, show that $\sigma^0_{\alpha,\beta}$-semistable objects are precisely those listed in the statement.
\end{proof}

\begin{remark}
A simple consequence of Lemma \ref{lem_F_p_stable} is that every $F_p$ is $\sigma(\alpha,\beta)$-stable for all $(\alpha,\beta)\in V$ (defined by Eq. \eqref{eq_def_of_V}). In fact, $F_p$ is actually $\sigma^0_{\alpha,\beta}$-semistable for all $0<\alpha$, $-1<\beta<0$. Since this strip intersects $V$, $F_p$ is also $\sigma(\alpha,\beta)$-semistable, for some $(\alpha,\beta)\in V$, and hence for all of them by Prop. \ref{prop_3.6PY}. Having primitive numerical class, $F_p$ must be $\sigma(\alpha,\beta)$-stable. This proves that the $F_p$ are $\sigma$-stable, giving an alternative argument than that of Theorem \ref{thm_three_moduli}.
\end{remark}

\begin{remark}\label{remark_elem_mod_F_k}
Note that the object $F_{y_0}\in \Coh^0_{\alpha,\beta}(Y)$ is not $\sigma^0_{\alpha,\beta}$-semistable. It is destabilized by the triangle \eqref{eq_destab_tr_Fk}.
However, $F_{y_0}$ is $\sigma(\alpha,\beta)$-stable, since it is the rotation of the stable object $E_{y_0}$ (see Theorem \ref{thm_three_moduli}).
There is no wall in the $(\alpha,\beta)$ plane which would make $F_{y_0}$ stable. Nevertheless, the objects $G_x$ defined in Sec. \ref{sec_obj_and_mod_in_ku} are $\sigma^0_{\alpha,\beta}$-semistable, and they can be obtained from \eqref{eq_destab_tr_Fk} as all the possible extensions in the other direction: in fact, the objects $G$ fitting in a triangle
\begin{equation}
\label{eq_triangle_Gx}
   [\sO_Y^{\oplus 3} \to \sO_Y(1)] \to G\to \sO_Y(-1)[2] 
\end{equation}
are all and only the $G_x$. Indeed, the complex $[\sO_Y^{\oplus 3} \to \sO_Y(1)]$ fits in the Koszul complex
\[ \sO_Y(-2) \xrightarrow{a}\sO_Y(-1)^{\oplus 3} \to \sO_Y^{\oplus 3} \xrightarrow{b} \sO_Y(1) \to \C_{y_0}. \]
Then the cohomology sequence of \eqref{eq_triangle_Gx} gives immediately
\begin{equation}
\label{eq_les_cohomologies_of_G}
\begin{split}
    \sH^0(G)\simeq \C_{y_0},\\
    0\to \sO_Y(-1)  \xrightarrow{c} K \to \sH^{-1}(G),\end{split}
\end{equation}
where $K=\ker(b)=\coker(a)$ and $\sH^{-2}(G)=0$ because $c\neq 0$. Considering the sequence $\sO(-2) \to \sO(-1)^{\oplus 3} \to K$, one sees that $c$ must lift to an inclusion $\sO(-1)\to \sO(-1)^{\oplus 3}$, and hence $\sH^{-1}(G)\simeq \coker(c)=\coker(\sO_Y(-2)\to \sO_Y(-1)^{\oplus 2})=I_{C_x}$ for some $x\in \pr 2$. In other words, $G$ has cohomologies
\[ I_{C_x}[1] \to G \to \C_{y_0} \]
and hence $G\simeq G_x$ for some $x$ (see Remark \ref{rmk_IZ1_unstable}). Conversely, all $G_x$ fit in a triangle \eqref{eq_triangle_Gx}.
\end{remark}

\section{Stable pairs and moduli of \texorpdfstring{$\sigma^0_{\alpha,\beta}$}{sigma-a,b}-semistable complexes}\label{sec_stable_pairs}

In this section we show that there is a fine moduli space for $\sigma^0_{\alpha,\beta}$-semistable complexes. We recall that a \textit{stable pair} on $Y$ is a pair $(P,s)$ where: 
\begin{itemize}
    \item[--] $P$ is a pure sheaf supported on a curve of $Y$;
    \item[--] $s$ is a map
\[\sO_Y \xrightarrow{s} P \]
with zero-dimensional cokernel (see \cite{PT09_CurveCounting}).
\end{itemize}
We say that $\sO_Y\xrightarrow{s} P$ has class $\kappa_1$ if $v(P)=\kappa_1-v(\sO_Y)$.

A \textit{family of stable pairs} over a quasi-projective base scheme $B$ is a pair $(P,s)$ where $P\in \Coh(Y\times B)$ is flat over $B$ and
\[ \sO_{Y\times B} \xrightarrow{s} P, \]
with the property that the restriction $(P_b,s_b)$ is a stable pair on $Y\times\{b\}$ for all closed $b\in B$. 

There is a fine moduli space $P(\kappa_1)$ representing the functor 
\[ \PP(\kappa_1) \colon (\mathrm{Sch/\C})^{\mathrm{op}} \to \mathrm{Sets} \]
whose value on a scheme $B$ is the set of families of stable pairs over $B$ of class $\kappa_1$, and which maps morphisms to pull-backs of families (the space $P(\kappa_1)$ is constructed using GIT techniques and it is projective \cite{LP93}). Pandharipande and Thomas show that two stable pairs $\sO_Y\xrightarrow{s} P$ and $\sO_Y\xrightarrow{s'} P'$ are isomorphic if and only if they are quasi-isomorphic as complexes in $D^b(Y)$ \cite[Prop. 1.21]{PT09_CurveCounting}. As a consequence, they identify $\PP(\kappa_1)$ with the moduli functor whose value in $B$ is the quasi-isomorphism class of $B$-perfect complexes on $Y\times B$ that restrict to stable pairs of class $\kappa_1$ on closed points of $B$ \cite[\S 2]{PT09_CurveCounting}.

On the other hand, consider the weak stability  condition $\sigma_{\alpha,\beta}^0$ of Theorem \ref{thm_classification}. We can define a moduli functor
\begin{align}
     \sM^0_{\alpha,\beta}(\kappa_1) \colon (\mathrm{Sch}/\C)^{\mathrm{op}} \to \mathrm{Gpds}  
\end{align}
whose value on a scheme $B$ is the groupoid of all $B$-perfect complexes $I\in D(Y\times B)$ such that for all closed $b\in B$, $I_b\in D(Y\times \{b\})$ is $\sigma^0_{\alpha,\beta}$-semistable of class $\kappa_1$ (as above, the value of $\sM^0_{\alpha,\beta}(\kappa_1)$ on morphisms is pull-back).

Observe that Theorem \ref{thm_classification} classifies exactly all stable pairs of class $\kappa_1$. In fact, one can argue as in \cite[Lemma 1.6]{PT09_CurveCounting} and show that a stable pair of class $\kappa_1$, viewed as a complex $I\coloneqq [\sO_Y\xrightarrow{s} F]\in D^b(Y)$, satisfies $\sH^0(I)\simeq I_C$ where $C$ is a degree 1 curve, $\mathrm{length}(\sH^1(I))\leq 1$, and all other cohomologies vanish. Such complexes are precisely (shifts of) those in Theorem \ref{thm_classification}.

In other words, $\PP(\kappa_1)$ (interpreted as a moduli functor of complexes) is identified with $\sM^0_{\alpha,\beta}(\kappa_1)$, and therefore $P(\kappa_1)$ is a fine moduli space for $\sM^0_{\alpha,\beta}(\kappa_1)$. 

Moreover (recall the descriptions of $M_G(\kappa_1)$ and $M_\sigma(-\kappa_1)$ in Theorem \ref{thm_Gieseker} and Theorem \ref{thm_three_moduli}) we have:

\begin{thm}\label{thm_stable_pairs}
The projective scheme $P(\kappa_1)$ is a fine moduli space of $\sigma^0_{\alpha,\beta}$-semistable objects, for $\sigma^0_{\alpha,\beta}$ as in Theor. \ref{thm_classification}. $P(\kappa_1)$ contains the surface of lines $F(Y)$, and has a second irreducible component $\tilde{M}_3$, which is the blow-up of $M_3\simeq Y$ at $y_0$.
\end{thm}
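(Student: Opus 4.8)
The plan is to prove the three assertions in turn, the first being essentially already established. For the statement that $P(\kappa_1)$ is a fine moduli space of $\sigma^0_{\alpha,\beta}$-semistable objects I would simply invoke the discussion preceding the theorem: Theorem \ref{thm_classification} matches the $\sigma^0_{\alpha,\beta}$-semistable objects of class $\kappa_1$ with the stable pairs of the same class, so the functors $\PP(\kappa_1)$ and $\sM^0_{\alpha,\beta}(\kappa_1)$ coincide and $P(\kappa_1)$ represents both. To include $M_G(\kappa_1)$, I would use the tautological ideal sheaf $\sI$ on $M_G(\kappa_1)\times Y\simeq \Hilb(Y,t+1)\times Y$: its shift $\sI[1]$ is a flat family of objects of type \ref{itm_IZ_Sab0}, hence $\sigma^0_{\alpha,\beta}$-semistable, and the universal property yields a morphism $M_G(\kappa_1)\to P(\kappa_1)$. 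Being an injective proper morphism of projective schemes it is a closed immersion, and since the generic objects of $M_1,M_2$ are shifted sheaves $I_\ell[1]$, $(I_{C\cup p})[1]$ with $\sH^0=0$ while the $F_p$ have $\sH^0=\C_p\neq 0$, the components $M_1$ (dimension $2$) and $M_2$ (dimension $5$) of Theorem \ref{thm_Gieseker} are not contained in the three-dimensional locus of the $F_p$, and so remain irreducible components of $P(\kappa_1)$.

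Next I would construct the third component directly on $\widetilde Y=\Bl_{y_0}Y$. Recall from Section \ref{sec_Veronese_double_cones} that $\pi\coloneqq\widetilde{\eta}\circ\widetilde{\phi}\colon \widetilde Y\to \pr 2$ presents $\widetilde Y$ as the family of curves $C_x$ and that $\sigma_Y$ contracts $E\cong\pr 2$ to $y_0$. I would form the incidence variety $\Gamma=\{(q,y)\in \widetilde Y\times Y \st y\in C_{\pi(q)}\}$, which carries the section $q\mapsto(q,\sigma_Y(q))$ — note $\sigma_Y(q)\in C_{\pi(q)}$ always, and $\sigma_Y(q)=y_0$ when $q\in E$. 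The associated relative divisor gives a family of degree-one line bundles restricting to $\sO_{C_x}(\sigma_Y(q))$ fibrewise, and setting $\sF\coloneqq \mathrm{cone}\big(\sO_{\widetilde Y\times Y}\to (\text{its pushforward to }\widetilde Y\times Y)\big)$ produces a family whose fibre at $q\notin E$ is $F_{\sigma_Y(q)}$ and whose fibre at $q\in E$ over $x$ is $G_x$. By Lemma \ref{lem_F_p_stable} these are $\sigma^0_{\alpha,\beta}$-semistable, so $\sF$ induces a morphism $f\colon \widetilde Y\to P(\kappa_1)$; I define $\tilde M_3$ to be its closed image.

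To identify $\tilde M_3$ with $\widetilde Y$ I would show $f$ is a monomorphism by reconstructing, from any family $\sF_B$ of such objects, its classifying map to $\widetilde Y$. The length-one sheaf $\sH^0(\sF_B)$ determines a morphism $B\to Y$ (sending $F_p\mapsto p$ and $G_x\mapsto y_0$), while $\sH^{-1}(\sF_B)\simeq I_{C_x}$ records the pencil of linear forms cutting out $C_x$ and hence a morphism $B\to\pr 2$. Together they give $B\to Y\times\pr 2$, which agrees with $\phi_H$ over $Y\setminus\{y_0\}$ and therefore factors through the closure of the graph of $\phi_H$, i.e. through the closed immersion $(\sigma_Y,\pi)\colon \widetilde Y\hookrightarrow Y\times\pr 2$. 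This recovers the classifying map, so $f$ is a monomorphism; as a proper monomorphism it is a closed immersion, whence $f\colon \widetilde Y\xrightarrow{\sim}\tilde M_3$. Finally, the reconstruction of the first coordinate globalizes to a morphism $\tilde M_3\to M_3\simeq Y$ which equals $\sigma_Y$ under this identification: it is the isomorphism $F_p\mapsto p$ of Theorem \ref{thm_three_moduli} away from $E$, and it contracts the exceptional $\pr 2=\{G_x\}$ to the point $y_0$ corresponding to $F_{y_0}$. Hence $\tilde M_3$ is the blow-up of $M_3\simeq Y$ at $y_0$, and its generic object $F_p$ shows it is a third irreducible component.

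The hard part will be the scheme-theoretic control along the exceptional divisor and along the boundary $\tilde M_3\cap M_1=C_0$. Concretely, I must check that $\sF$ is genuinely flat over $\widetilde Y$ across $E$ (so that $f$ is a morphism and not merely a map on points), and that $\sH^0(\sF_B)$ stays a flat family of length-one sheaves and $\sH^{-1}(\sF_B)$ a flat family of ideals $I_{C_x}$ even where $C_x$ degenerates to a singular rational curve over $C_0\subset\pr 2$; only then are the two reconstruction morphisms defined scheme-theoretically and the monomorphism argument valid. Establishing this flatness — equivalently, that the relative line bundle $\sO_{C_x}(\sigma_Y(q))$ does not jump across $E$ nor at the degenerate curves — is where a genuine computation, rather than the set-theoretic bookkeeping above, is required.
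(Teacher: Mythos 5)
Your first two steps (the fine moduli interpretation and the inclusion of $M_G(\kappa_1)$) follow the paper's argument, but your construction of the third component runs in the opposite direction from the paper's, and it has two genuine gaps. The serious one is the construction of the family $\sF$ on $\widetilde Y\times Y$. Your universal curve $\Gamma\simeq \widetilde Y\times_{\pr 2}\widetilde Y$ contains points $(q,q)$ at which $\sigma_Y(q)$ is the \emph{node} of the fibre $C_{\pi(q)}$ (these occur over every $x\in C_0$), and at such a point the diagonal section is not a Cartier divisor of $\Gamma$: near the node the fibration $\pi$ is locally $xy=t$, so $\Gamma$ is locally the product of a smooth factor with the threefold $\{xy=x'y'\}$, and the diagonal is a plane through the vertex of this rank-four quadric cone, a Weil divisor which is not Cartier. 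Hence ``the associated relative divisor'' does not give a line bundle there at all; the problem is not that a line bundle might jump, but that it does not exist. The fibre of any flat completion of your family at such a point must be the non-locally-free stable pair $[\sO_Y\to \nu_*\sO_{\widetilde{C}_x}]$, where $\nu$ is the normalization (this is the complex with cohomologies $I_{C_x}$ and $\C_p$, $p$ the node, as in Lemma \ref{lem_stable_k1}); the uniform object to use is $\HHom_\Gamma(I_\Delta,\sO_\Gamma)$, and proving its flatness over $\widetilde Y$ and its compatibility with restriction to fibres is exactly the delicate point (the Abel-map problem for a family of nodal curves). You flag this locus, but frame it as a flatness check for a line bundle, which understates what is missing. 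The second gap: a proper monomorphism $f\colon \widetilde Y\to P(\kappa_1)$ identifies $\widetilde Y$ only with a closed subscheme of the component $\tilde{M}_3$; to conclude $\widetilde Y\simeq \tilde{M}_3$ you also need $\tilde{M}_3$ to be reduced, which requires the tangent-space computations $\mathrm{ext}^1(F_p,F_p)=\mathrm{ext}^1(G_x,G_x)=3$ (Theorem \ref{thm_three_moduli} and Lemma \ref{lem_ext1GG}); moreover your reconstruction of classifying maps from $\sH^0(\sF_B)$ and $\sH^{-1}(\sF_B)$ assumes cohomology-and-base-change statements that you do not prove.

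For comparison, the paper works in the other direction and avoids both issues: it starts from the universal family $\sI$ that exists a priori on $\tilde{M}_3\subset P(\kappa_1)$, uses the ext-computations to see that $\tilde{M}_3$ is smooth at the $F_p$ and $G_x$ and that the locus $D'=\{G_x\}$ is a Cartier divisor, and then performs an elementary modification of $\sI$ along $D'\times Y$ (an octahedral-axiom construction based on the triangle \eqref{eq_triangle_Gx}) which replaces each fibre $G_x$ by $F_{y_0}$. The modified family is a family of $\sigma$-stable objects of $\Ku(Y)$, hence induces a morphism $\tilde{M}_3\to M_3\simeq Y$ contracting the Cartier divisor $D'$ to $y_0$; this is the blow-down map, and no family has to be built by hand over the degenerate locus. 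If you want to salvage your route you need (i) the $\HHom_\Gamma(I_\Delta,\sO_\Gamma)$ construction together with a genuine flatness proof across the nodal fibres and across $E$, and (ii) the reducedness of $\tilde{M}_3$ via the ext-computations; with those supplied, your argument would be a legitimate alternative, with the merit of exhibiting the blow-up structure of $\tilde{M}_3$ directly rather than through the contraction.
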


\begin{proof}
It follows from Theorem \ref{thm_classification} that the universal family of $M_G(\kappa_1)$, restricted to $F(Y)$, induces an inclusion $F(Y)\to P(\kappa_1)$. We denote by $\tilde{M}_3$ the second irreducible component of $P(\kappa_1)$, it parametrizes complexes of the form $F_p$ for $y_0\neq p\in Y$, and $G_x$ for $x\in \pr 2$. 

Observe first of all that $\tilde{M}_3$ is smooth outside the intersection with the other components, in fact, we have $\mathrm{ext}^1(F_p,F_p)=3$ (by Theorem \ref{thm_three_moduli}) and $\mathrm{ext}^1(G_x,G_x)=3$ by Lemma \ref{lem_ext1GG} below. Then, the locus $D'$ parametrizing the objects $G_x$ lies in the smooth locus of $\tilde{M}_3$, and hence $D'$ is a Cartier divisor in $\tilde{M}_3$. Set $D=D'\times Y$ and write $i_D\colon D \to \tilde{M}_3\times Y$ for the inclusion.

Let $\sI\in D^b(\tilde{M}_3\times Y)$ be the universal  family of $P(\kappa_1)$ restricted to $\tilde{M}_3$. We will use a modification of $\sI$ to construct a family of objects of $\Ku(Y)$ supported on $\tilde{M}_3$. Consider the triangle 
\[ \sI(-D) \to \sI \xrightarrow{r} \sI_{|D}  \]
and the relative version of \eqref{eq_triangle_Gx} over the projection $p_D\colon D \to Y$:
\begin{equation*}\label{eq_triangle_def_of_u}
    p_D^*[\sO_Y^{\oplus 3}\to \sO_Y(1)]  \to \sI_{|D} \xrightarrow{u} p_D^*\sO_Y(-1)[2]
\end{equation*}
(we denote $A\coloneqq [\sO_Y^{\oplus 3}\to \sO_Y(1)] $ in what follows). We abuse notation and we use the same letter $u$ for the map $i_{D*}\sI_{|D} \xrightarrow{u} i_{D*}p_D^*\sO_Y(-1)[2]$ obtained by pushing forward.
The octahedral axiom applied to $u\circ r$ yields a triangle
\begin{equation} \label{eq_octahedral_outcome}
   \sI(-D) \to \sI' \to i_{D*}p_D^*A, 
\end{equation}
where $\sI'$ is the (shift of the) cone of $u\circ r$. By tensoring $i_{D*}p_D^*A$ with the sequence $\sO_{\tilde{M}_3\times Y}(-D)\to \sO_{\tilde{M}_3\times Y} \to i_{D*}\sO_D$ we obtain a triangle on $D$:
\begin{equation}
    \label{eq_triang_derived_restriction}
p_D^*A(-D) \xrightarrow{0} p_D^*A \to \mathbb{L}i^*_D i_{D*}p_D^*A,  
\end{equation}
where $\mathbb{L}i^*_D i_{D*}p_D^*A$ is the derived restriction of $i_{D*}p_D^*A$ to $D$.

On the other hand, restriction of \eqref{eq_octahedral_outcome} to a fiber $[G_x]\times Y$ of $D$, gives a triangle

\begin{equation}
    \label{eq_octahedral_outcome_on_fibers}
 G_x \to  (\mathbb{L}i_D^*\sI')_{[G_x]} \to (\mathbb{L}i^*_D i_{D*}p_D^*A)_{[G_x]}. \end{equation}
 
The cohomologies of $(\mathbb{L}i^*_D i_{D*}p_D^*A)_{[G_x]}$ can be computed from \eqref{eq_triang_derived_restriction}, using that the cohomologies of $A$ (and those of $G_x$) are computed in Remark \ref{remark_elem_mod_F_k}. Then, taking cohomologies of \eqref{eq_octahedral_outcome_on_fibers}, we see
\begin{center}
\def\arraystretch{1.3}
\begin{tabular}{ccccc}
 $G_x $ &$\to$ &  $(\mathbb{L}i_D^*\sI')_{[G_x]} $ &$\to$ & $(\mathbb{L}i^*_D i_{D*}p_D^*A)_{[G_x]}$\\\hline
$0 $ &$ \to$ & $\sH^{-2}((\mathbb{L}i_D^*\sI')_{[G_x]}) $ &$\to$ & $K \to$\\
$I_{C_x}$ &$\xrightarrow{f} $ & $\sH^{-1}((\mathbb{L}i_D^*\sI')_{[G_x]}) $ &$ \to $ & $M \to$\\
$\C_{y_0} $ &$\xrightarrow{g}$ & $ \sH^{0}((\mathbb{L}i_D^*\sI')_{[G_x]}) $ &$ \to  $ & $\C_{y_0}$\\
\hline
\end{tabular}
\end{center}
where $K=h^{-1}(A)$ and $M$ is an extension of $\C_{y_0}$ by $K$. By construction, the connecting map $K\to I_{C_x}$ coincides with that in \eqref{eq_les_cohomologies_of_G}. So $\sH^{-2}((\mathbb{L}i_D^*\sI')_{[G_x]}) \simeq \sO_Y(-1)$ and $f=0$. Similarly, the map $M\to \C_{y_0}$ is surjective, so that $\sH^{-1}((\mathbb{L}i_D^*\sI')_{[G_x]}) \simeq K$ and $g=0$, which implies $\sH^{0}((\mathbb{L}i_D^*\sI')_{[G_x]}) \simeq \C_{y_0}$. 

Therefore there is a triangle 
\[ \sO_Y(-1)[2] \to (\mathbb{L}i_D^*\sI')_{[G_x]} \to A, \]
which shows that $(\mathbb{L}i_D^*\sI')_{[G_x]}\simeq F_{y_0}$ by Lemma \ref{lem_rot_at_k} (and because $\mathrm{ext}^1(A,\sO_Y(-1)[2])=1$).

Then, $\sI'\in D(\tilde{M}_3\times Y)$ defines a flat family of $\sigma$-stable objects of $\Ku(Y)$ of class $\kappa_1$, and therefore a morphism $\tilde{M}_3\to M_3$ which maps $D'$ to $y_0$. \qedhere

\end{proof}

\begin{lemma}\label{lem_ext1GG}
For $x\in \pr 2$, we have $\mathrm{ext}^1(G_x,G_x)=3$. 
\end{lemma}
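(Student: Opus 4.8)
The plan is to compute $\mathbf R\Hom(G_x,G_x)$ from the \emph{defining} triangle of $G_x$ rather than from its cohomology objects, which keeps the bookkeeping minimal. Write $P\coloneqq \sO_{C_x}(y_0)$ and let $s\colon \sO_Y\to P$ be the section of \eqref{eq_def_Gx}, so that we have a triangle $\sO_Y\xrightarrow{s}P\to G_x\xrightarrow{\delta}\sO_Y[1]$. First I would assemble the ``building blocks'' $\mathbf R\Hom$ among $\sO_Y$ and $P$. Since $Y$ is Fano, $\mathbf R\Hom(\sO_Y,\sO_Y)=\C$, and $\mathbf R\Hom(\sO_Y,P)=H^\bullet(C_x,\sO_{C_x}(y_0))=\C$ (concentrated in degree $0$, as $\sO_{C_x}(y_0)$ has degree $1$ on the arithmetic genus $1$ curve $C_x$); Serre duality then gives $\mathbf R\Hom(P,\sO_Y)=\C[-2]$. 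For $\mathbf R\Hom(P,P)$ I would use the local-to-global spectral sequence together with $N_{C_x/Y}\simeq \sO_{C_x}(y_0)^{\oplus 2}$ (as $C_x$ is a complete intersection of two members of $|H|$, cf. Lemma \ref{lem_exti(IC,OC)}) and $\omega_{C_x}\simeq \sO_{C_x}$, obtaining $\Ext^\bullet(P,P)=(\C,\C^3,\C^2,0)$.

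Feeding these into the long exact sequences produced by applying $\mathbf R\Hom(-,\sO_Y)$ and $\mathbf R\Hom(-,P)$ to the defining triangle yields $\mathbf R\Hom(G_x,\sO_Y)=\C[-1]\oplus\C[-2]$ and $\mathbf R\Hom(G_x,P)=\C^3[-1]\oplus\C^2[-2]$. Applying $\mathbf R\Hom(G_x,-)$ to $\sO_Y\xrightarrow{s}P$ then gives the exact sequence
\[ \Ext^1(G_x,\sO_Y)\xrightarrow{s_*}\Ext^1(G_x,P)\to \Ext^1(G_x,G_x)\to \Ext^2(G_x,\sO_Y)\xrightarrow{s_*}\Ext^2(G_x,P), \]
whence $\mathrm{ext}^1(G_x,G_x)=4-\rho_1-\rho_2$, where $\rho_i$ is the rank of $s_*\colon \Ext^i(G_x,\sO_Y)\to \Ext^i(G_x,P)$. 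As a consistency check, $\chi(G_x,G_x)=\chi(\kappa_1,\kappa_1)=-1$ holds automatically for every choice of $\rho_1,\rho_2$.

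It remains to compute the two ranks, and this is where the real work sits. The vanishing $\rho_1=0$ is formal: $\Ext^1(G_x,\sO_Y)$ is one-dimensional, generated by the connecting map $\delta$, and $s_*(\delta)=s[1]\circ\delta=0$ since $\delta$ and $s[1]$ are consecutive morphisms in the rotated triangle. In particular $\Ext^0(G_x,G_x)=\C$, so $G_x$ is simple. The harder point is $\rho_2=1$: using that the isomorphism $\Ext^2(G_x,\sO_Y)\xrightarrow{\sim}\Ext^2(P,\sO_Y)$ (pre-composition with $P\to G_x$) commutes with $s_*$, I would reduce the claim to the nonvanishing of $s_*\colon \Ext^2(P,\sO_Y)\to \Ext^2(P,P)$. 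By the local-to-global spectral sequence both of these groups reduce to the global sections of the relevant $\mathcal{E}xt^2$-sheaves (the lower terms vanish, as $\mathcal{E}xt^1(P,-)$ contributes $H^1$ of a positive-degree bundle and $\mathcal{H}om(P,-)$ contributes $H^2$ of a curve), namely $H^0(C_x,\sO_{C_x}(y_0))$ and $H^0(C_x,\sO_{C_x}(2y_0))$, and $s_*$ is identified with multiplication by $s|_{C_x}$, the section of $\sO_{C_x}(y_0)$ with divisor $y_0$. As $C_x$ is integral and this section is nonzero, the multiplication is injective, so $\rho_2=1$ and $\mathrm{ext}^1(G_x,G_x)=3$.

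The main obstacle is thus the last step, $\rho_2=1$: one must pin down the interaction encoded by the extension class of $G_x$, which I handle geometrically by recognizing $s_*$ on $\mathcal{E}xt^2$-sheaves as multiplication by the defining section of $\sO_{C_x}(y_0)$. Equivalently, one could run the hyperext spectral sequence with cohomology objects $I_{C_x}$ and $\C_{y_0}$, exactly parallel to Proposition \ref{prop_ext1=5_non_reduced_smooth}; there the $d_2$-differentials are Yoneda products with the class $w\in\Ext^2(\C_{y_0},I_{C_x})$, and the same geometric input shows they have the ranks needed to cut the $E_2$-contribution in total degree $1$ from $8$ down to $3$. A minor point to check is that for special $x$ the curve $C_x$ may be a singular rational curve; but $y_0$ is always a smooth point of $C_x$, and $C_x$ stays integral with $\omega_{C_x}\simeq\sO_{C_x}$, so the computation is unaffected.
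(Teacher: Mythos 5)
Your proof is correct, and it takes a genuinely different (and in fact more careful) route than the paper's. The paper runs the hyperext spectral sequence \eqref{eq_spec_seq} on the same presentation $G_x\simeq[\sO_Y\to\sO_{C_x}(y_0)]$, reads off an upper bound $\mathrm{ext}^1(G_x,G_x)\leq 3$ from the first page together with the nonvanishing of the bottom-row differential, and matches it with the lower bound $\mathrm{ext}^1(G_x,G_x)\geq 3$ coming from the three-dimensional family of deformations (two for $C_x$, one for the marked point); you instead compute the answer exactly from the defining triangle, via the two ranks $\rho_1=0$ (formal, from consecutive maps in a triangle) and $\rho_2=1$ (geometric). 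The comparison is more than stylistic. Your building block $\mathrm{ext}^1(P,P)=3$ for $P=\sO_{C_x}(y_0)$ is the correct value, namely $h^1(\sO_{C_x})+h^0(N_{C_x/Y})=1+2$, the first summand recording deformations of the line bundle; it disagrees with the corresponding entry ``$0+2$'' in the paper's first-page table, which appears to drop the $H^1(\sO_{C_x})$ contribution. With the corrected entry the paper's first page only yields $\mathrm{ext}^1(G_x,G_x)\leq 1+3+0=4$, and together with the lower bound $\geq 3$ the paper's two-sided argument no longer closes. What is needed to recover the equality is precisely your step $\rho_2=1$: in spectral sequence language, the injectivity of the $d_1$-differential $\Ext^2(P,\sO_Y)\to\Ext^2(P,P)$ (post-composition with $s$), which kills the $E_2^{-1,2}$-term; your identification of this map, via the local-to-global edge isomorphisms, with multiplication by $s|_{C_x}$ on global sections of $\EEXT{2}{P}{\sO_Y}\simeq\sO_{C_x}(y_0)$ and $\EEXT{2}{P}{P}\simeq\sO_{C_x}(2y_0)$ is exactly the missing ingredient. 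So your argument not only proves the stated value but repairs the paper's own computation, trading its moduli-theoretic lower bound for a purely cohomological one; your closing remark on singular $C_x$ (integral, Gorenstein, with $y_0$ a smooth point) also covers a degeneration the paper passes over silently.
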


\begin{proof}
We compute this applying the spectral sequence \eqref{eq_spec_seq} to the complex $G_x\simeq [\sO_Y \to \sO_{C_x}(y_0)]$. The first page has spaces of dimension
\begin{center}
\begin{tabular}{c|cc}
0 & 0 &0\\
1 & * &0\\
0 & $0+2$ &0\\
0 & $1+1$ &1\\
\hline
\end{tabular}
\end{center}
Since the map in the bottom row  
\[ \Hom(\sO_Y,\sO_Y)\oplus \Hom(\sO_{C_x}(y_0),\sO_{C_x}(y_0)) \to \Hom(\sO_Y,\sO_{C_x}(y_0))\]
is non-zero, $\mathrm{ext}^1(G_x,G_x)\leq 3$. But every $G_x$ fits in a three dimensional component (there are two dimensions for deforming $C_x$ and one to move $y_0$), so $\mathrm{ext}^1(G_x,G_x)\geq 3$ and equality holds.
\end{proof}

\subsection{\texorpdfstring{$P(\kappa_1)$}{P(k1)} as a generalized Quot scheme}

In Section \ref{sec_Fano_scheme_of_lines} we showed that $M_G(\kappa_1)$ is isomorphic to the Hilbert scheme of lines on $Y$. Here, we give a similar interpretation for the moduli space $P(\kappa_1)$ of $\sigma^0_{\alpha,\beta}$-semistable objects as quotients of $\sO_Y$ in an appropriate heart of $D^b(Y)$.

Consider the sheaves of the form $\sO_C(p)$, where $p\in Y$ (possibly $p=y_0$) and $C=C_x$ for some $x\in \pr 2$. By  Riemann-Roch we also have
\[ \chi(\sO_C(p)(t))=1+t. \]
However, the $\sO_C(p)$ are not sheaf quotients of $\sO_Y$ and do not represent points of the Hilbert scheme of lines of $Y$. 

In this section, we consider a different space of quotients, and show that the distinguished triangles
\begin{equation}
\label{eq_ses_quot}
    \begin{split}
         I_\ell \to &\;\sO_Y \to \sO_\ell\\
         F_p[-1] \to &\;\sO_Y \to \sO_C(p)\\
        G_x[-1] \to &\;\sO_Y \to \sO_{C_x}(y_0)
    \end{split}
\end{equation}
are all short exact sequences in an appropriate abelian category (the notation here is the same as that of Theorem \ref{thm_classification}). 

More precisely, define $\mathcal{B}^\theta_{\alpha,\beta}$ as follows: pick $(\alpha,\beta)\in V$ so that the chain of inequalities 
\[ \frac{\beta}{-\left(\frac{\alpha^2-\beta^2}{2}+1\right)}=
\mu^0_{\alpha,\beta}(F_p)>0=\mu^0_{\alpha,\beta}(\sO_C(p)) > 
\mu^0_{\alpha,\beta}(\sO_Y)=\frac{2\beta}{-(\alpha^2-\beta^2)} \]
is satisfied. By the wall computation of Lemma \ref{lem_largest_Wall}, we may pick $0<\epsilon\ll 1$ so that, if $F$ is any unstable object of class $-\kappa_1$, then any destabilizing quotient $G$ satisfies $\mu^0_{\alpha,\beta}(G)< \theta\coloneqq \mu^0_{\alpha,\beta}(F)-\epsilon$. 

Then, consider the torsion pair in $\Coh^0_{\alpha,\beta}(Y)$ consisting of the categories $\Coh^0_{\alpha,\beta}(Y)_{\mu^0_{\alpha,\beta}\leq \theta}$ and $\Coh^0_{\alpha,\beta}(Y)_{\mu^0_{\alpha,\beta} > \theta}$ generated by $\sigma^0_{\alpha,\beta}$-semistable objects of slope $\leq \theta$ and $>\theta$ respectively. Denote by $\sB^\theta_{\alpha,\beta}$ the (shift of the) corresponding tilt:
\[\sB^\theta_{\alpha,\beta} \coloneqq \left[ \Coh^0_{\alpha,\beta}(Y)_{\mu^0_{\alpha,\beta}\leq \theta}, \Coh^0_{\alpha,\beta}(Y)_{\mu^0_{\alpha,\beta} > \theta}[-1] \right].\]

Since $I_Z[1]$, $F_p$, and $G_x$ are $\sigma^0_{\alpha,\beta}$-semistable of phase $>\theta$, their shifts by $-1$ belong to $\mathcal{B}^\theta_{\alpha,\beta}$. Similarly, by the choice of $\theta$ we have that $\sO_\ell$, $\sO_C(p)$, $\sO_{C_x}(y_0)$, and $\sO_Y$ belong to $ \mathcal{B}^\theta_{\alpha,\beta}$ as well. Then, the triangles in \eqref{eq_ses_quot} are short exact sequences in $\mathcal{B}^\theta_{\alpha,\beta}$. The converse is true:

\begin{proposition}\label{prop_quot}
Quotients of $\sO_Y$ of class $\kappa_1$ in $\mathcal{B}^\theta_{\alpha,\beta}$ are precisely the objects $\sO_\ell$, $\sO_C(p)$, and $\sO_{C_x}(y_0)$ listed in \eqref{eq_ses_quot}.
\end{proposition}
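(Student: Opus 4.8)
The converse, that the objects in \eqref{eq_ses_quot} are quotients of $\sO_Y$ in $\sB^\theta_{\alpha,\beta}$, is already established above, so the content is the reverse implication. Let $\sO_Y\twoheadrightarrow Q$ be a quotient in $\sB^\theta_{\alpha,\beta}$ of class $\kappa_1$ and let $S$ be its kernel, so that $S\in \sB^\theta_{\alpha,\beta}$ and $[S]=\kappa_1$. The plan is to show that $S$ is $\sigma^0_{\alpha,\beta}$-semistable, to identify it via Theorem \ref{thm_classification}, and then to read off $Q$ as the cone of $S\to \sO_Y$.

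To prove semistability I would pass to the slicing $\PP$ of $\sigma^0_{\alpha,\beta}$ and record that, if $\phi_0$ denotes the phase corresponding to the slope $\theta$ (so that $\mu^0_{\alpha,\beta}(-)>\theta$ is the same as phase $>\phi_0$), then the tilt can be described as $\sB^\theta_{\alpha,\beta}=\PP((\phi_0-1,\phi_0])$. Consequently every $\sigma^0_{\alpha,\beta}$-Harder--Narasimhan factor of $S$ has phase $>\phi_0-1$, and hence every factor of $S[1]$, which has class $-\kappa_1$, has phase $>\phi_0$. Now suppose $S$, equivalently $S[1]$, were not semistable: its minimal Harder--Narasimhan factor is a destabilizing quotient of $S[1]$, so by the very choice of $\theta$ and $\epsilon$ it has slope $<\theta$, that is phase $<\phi_0$. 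This contradicts the previous sentence, so $S$, and therefore $S[1]$, is $\sigma^0_{\alpha,\beta}$-semistable.

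Since $S[1]$ is now a $\sigma^0_{\alpha,\beta}$-semistable object of class $-\kappa_1$, Theorem \ref{thm_classification} forces $S[1]\simeq F_p$, $G_x$, or $I_Z[1]$; equivalently $S\simeq F_p[-1]$, $G_x[-1]$, or $I_Z$. To recover $Q$ I would first check that the map $S\to \sO_Y$ is unique up to scalar, so that the cone is unambiguous. For $S=I_Z$ this is $\Hom(I_Z,\sO_Y)=\C$, which follows from $0\to I_Z\to \sO_Y\to \sO_Z\to 0$ together with $\Hom(\sO_Z,\sO_Y)=\Ext^1(\sO_Z,\sO_Y)=0$ (Serre duality and the fact that $\sO_Z$ is supported in dimension $\le 1$). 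For $S=F_p[-1]$ and $S=G_x[-1]$ one has $\Hom(S,\sO_Y)=\Ext^1(F_p,\sO_Y)$, respectively $\Ext^1(G_x,\sO_Y)$, which I expect to be one-dimensional by applying $\mathbf R\Hom(-,\sO_Y)$ to the defining triangles \eqref{eq_def_Fp} and \eqref{eq_def_Gx}, using $H^\bullet(\sO_Y)=\C$ and a short Serre-duality computation on the genus $1$ curve (with $\omega_Y=\sO_Y(-2)$). Since scaling a morphism induces an isomorphism of its cone, $Q$ is then determined, and rotating the ideal-sheaf sequence and the triangles \eqref{eq_def_Fp}, \eqref{eq_def_Gx} identifies $Q$ with $\sO_Z$, $\sO_C(p)$, and $\sO_{C_x}(y_0)$ respectively, exactly as in \eqref{eq_ses_quot}.

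The crux is the semistability of $S$: everything rests on the calibration of $\theta$ just below $\mu^0_{\alpha,\beta}(-\kappa_1)$ and above the slope of every possible destabilizing quotient, which is precisely what forbids a Harder--Narasimhan factor of $S[1]$ from dropping below phase $\phi_0$. A secondary technical point to handle with care is the one-dimensionality of $\Hom(S,\sO_Y)$, needed to make the final cone computation well defined; the classification of Theorem \ref{thm_classification} then does the rest.
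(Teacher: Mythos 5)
Your proposal is correct and follows essentially the same route as the paper: both arguments show that the kernel $S$ of $\sO_Y\twoheadrightarrow Q$ is $\sigma^0_{\alpha,\beta}$-semistable by playing the structure of the tilted heart $\sB^\theta_{\alpha,\beta}$ against the calibration of $\theta$ (no destabilizing quotient can have slope in $(\theta,\mu^0_{\alpha,\beta}(\kappa_1))$), and then invoke Theorem \ref{thm_classification} to identify $S$ and hence the quotient. Your extra verification that $\Hom(S,\sO_Y)\simeq\C$ — so that the cone, and therefore $Q$, is determined by $S$ alone — is a worthwhile detail that the paper's proof leaves implicit when it asserts that the sequence must be one of those listed in \eqref{eq_ses_quot}.
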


\begin{proof}
First, we claim that if 
\begin{equation}
    \label{eq_ex_sec_quot}
    F \to \sO_Y \to Q 
\end{equation}
is a short exact sequence in $\sB^\theta_{\alpha,\beta}$ with $\chi(Q(t))=t+1$, then $F$ is $\sigma^0_{\alpha,\beta}$-semistable of class $\kappa_1$. Indeed, the statement about the numerical class is immediate. As for semistability: a destabilizing quotient $G$ of $F$ must have $\mu^0_{\alpha,\beta}(F) > \mu^0_{\alpha,\beta}(G) >  \theta$,  otherwise $G\notin \sB^\theta_{\alpha,\beta}$. But this contradicts our choice of $\theta$.

So, $F$ must be (a shift of) the objects classified in Theorem \ref{thm_classification}, and the sequence \eqref{eq_ex_sec_quot} must be one of those listed in \eqref{eq_ses_quot}.
\end{proof}

\begin{remark}
The arguments above identify the moduli functor $\sM^0_{\alpha,\beta}(\kappa_1)$ with the generalized Quot functor defined in \cite[Sec. 11]{BLMNPS19} and \cite{Rota2019}.
\end{remark}

\bibliographystyle{amsalpha}
\bibliography{bibliography_APR.bib}
\end{document}